\documentclass[11pt]{amsart}
\usepackage[utf8]{inputenc}
\usepackage{amssymb}
\usepackage{microtype}
\usepackage{booktabs}
\usepackage{placeins}
\usepackage{xurl}
\usepackage[hidelinks]{hyperref}

\numberwithin{equation}{section}
\newtheorem{theorem}{Theorem}[section]
\newtheorem{lemma}[theorem]{Lemma}
\newtheorem{proposition}[theorem]{Proposition}
\newtheorem{corollary}[theorem]{Corollary}
\newtheorem{conjecture}[theorem]{Conjecture}
\theoremstyle{definition}

\newcommand{\leg}[2]{\left(\dfrac{#1}{#2}\right)}
\newcommand{\Nm}{\mathrm N}

\title[The Factorial Separation Conjecture]{The Factorial Separation Conjecture}
\author[The Factorial Separation Conjecture]{Valmir Krasniqi}
\address{Institute of Science, Technology, Engineering and Mathematics (STEM),
Rr. Nekibe Kelmendi Nr. 26, 10000 Prishtin\"e, Republic of Kosovo}
\email{valmir@stem-ks.org}
\date{}
\subjclass[2020]{Primary 11D61; Secondary 11A15, 11N13, 11R42}
\keywords{Factorial, perfect power, cubic reciprocity, Hecke character, smooth ideal, explicit formula, Generalized Riemann hypothesis}

\hypersetup{
  pdftitle={The Factorial Separation Conjecture},
  pdfauthor={Valmir Krasniqi},
  pdfsubject={The Factorial Separation Conjecture and unconditional and GRH-conditional results},
  pdfkeywords={Factorial Separation Conjecture, FSC, factorial, perfect power, cubic reciprocity, Hecke character, smooth ideal, explicit formula, GRH}}

\begin{document}
\begin{abstract}
We propose the Factorial Separation Conjecture (FSC), a strict strengthening of earlier factorial gap conjectures:
\[
|n!-m^k|>n
\]
for all integers $n\ge8$ and $m,k\ge2$.
The inequality excludes every displacement up to and including the factorial index.
We prove the conjecture unconditionally for exponents divisible by three, for even exponents at least four, and for every exponent at least two when the largest prime divisor of the base is at most $n+2$.
For exponents divisible by five, we prove the finite interval $8\le n\le1100$ and all sufficiently large indices, with an effective but uncomputed threshold.
The conclusion for every $n\ge8$ and every exponent divisible by five is proved under the Generalized Riemann Hypothesis (GRH) for a specified family of Dedekind zeta functions.
Here GRH asserts that their nontrivial zeros have real part $1/2$.
The arguments combine valuations, cyclotomic factorizations, cubic reciprocity, character estimates and explicit formulas.
All finite comparisons are justified in the text.
The general conjecture remains open.
\end{abstract}
\maketitle

\section{Introduction and main results}\label{sec:introduction}

Small additive perturbations of factorials connect classical factorial equations with power-residue conditions.
We propose the \emph{Factorial Separation Conjecture} (FSC), a strict strengthening of earlier factorial gap conjectures.
A nontrivial perfect power is an integer $m^k$ with integers $m,k\ge2$.
For $n\ge2$, define
\[
\Delta(n)=\min_{m,k\ge2}|n!-m^k|.
\]
The proposed lower bound is $\Delta(n)\ge n+1$ from index eight onward.

\begin{conjecture}\label{conj:fsc}
For every integer $n\ge8$ and all integers $m,k\ge2$,
\begin{equation}\label{eq:fsc}
|n!-m^k|>n.
\end{equation}
\end{conjecture}

The conjecture would imply the Brocard--Ramanujan conjecture that the equation
\begin{equation}\label{eq:brocard}
n!+1=m^2
\end{equation}
has only the positive integer solutions $(n,m)=(4,5),(5,11),(7,71)$; see \cite{BerndtGalway}.
It would also give, for every fixed integer $a$, the uniform cutoff
\[
n<\max\{8,|a|\}
\]
for solutions of $n!+a=m^k$ with $m,k\ge2$.
Thus the excluded displacement grows with the factorial index, uniformly in the base and exponent.
The starting index eight is necessary because $7!+1=71^2$.

The sequences A240939 and A240940 in the On-Line Encyclopedia of Integer Sequences \cite{Orr} record the least shifts above and below a factorial that produce a perfect power.
An online discussion initiated in January 2020 \cite{Mathphile} includes the conjecture that a displacement greater than three producing a perfect power must be at least the factorial index.
For $n\ge8$, Conjecture~\ref{conj:fsc} also excludes the boundary displacement and the shifts $1,2,3$.
The quadratic reductions below build on the reciprocity and factorization arguments in the answers cited in \cite{Mathphile}.

For $m\ge2$, write $P^+(m)$ for its largest prime divisor.
The following table collects the main unconditional results and their proofs.

\begin{theorem}\label{thm:main-unconditional}
Let $n,m,k$ be integers with $m,k\ge2$.
Then $|n!-m^k|>n$ under the assumptions in any row below.
\begin{center}
\normalfont\small
\begin{tabular}{@{}lll@{}}
\toprule
Condition & Range of $n$ & Proof\\
\midrule
$3\mid k$ & $n\ge6$ & \ref{thm:cubic-separation}\\
$k\ge4$ even & $n\ge8$ & \ref{thm:quartic-complete}, \ref{cor:large-even}\\
$P^+(m)\le n+2$ & $n\ge8$ & \ref{thm:bounded-base}\\
$5\mid k$ & $8\le n\le1100$ or $n\ge N_5$ & \ref{cor:quintic-finite}, \ref{thm:quintic-eventual}\\
\bottomrule
\end{tabular}
\end{center}
Here $N_5\ge1101$ is an effectively computable integer independent of $m$ and $k$.
\end{theorem}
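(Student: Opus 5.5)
Theorem~\ref{thm:main-unconditional} is a summary of four separate results proved later. The plan is to prove each row independently and then assemble them. Throughout, write $n!=m^k+a$ with $|a|\le n$ and aim for a contradiction. For $a=0$, Bertrand's postulate gives a prime $p\in(n/2,n]$ with $v_p(n!)=1$, so $n!$ is never a perfect power. Hence we may assume $a\neq0$.

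\emph{Bounded base ($P^+(m)\le n+2$).} Let $p$ be a prime dividing $m$, so $p\le n+2$. If $p\le n$, compare $p$-adic valuations: $v_p(m^k)\ge k\ge2$ and $v_p(n!)\ge1$, so $v_p(a)=v_p(n!-m^k)$. Either this forces $p^{\min(k v_p(m),v_p(n!))}\mid a$, which is too large for $|a|\le n$, or the valuations coincide. To exclude the latter, choose a prime $q\in(n/2,n]$ from Bertrand's postulate, which has $v_q(n!)=1$. If $q\mid m$ then $q^2\mid m^k$, which forces $q\| a$. If $q\nmid m$, then $m$ is built from the primes other than $q$. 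Using Legendre's formula for $v_2(n!)$ and the size constraint $|a|\le n$, one should be able to pin $a$ to $\pm$ a product of a few primes. The cases $p\in\{n+1,n+2\}$ force $p\nmid n!$ and $p\mid m$, so $p\mid a$ with $|a|\le n<p$, giving a contradiction. I expect a case analysis on small valuations to close this row.

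\emph{Cubes ($3\mid k$).} Reduce to $k=3$ by setting $M=m^{k/3}$, so $n!=M^3+a$. Work in $\mathbb Z[\omega]$. The key idea is that every prime $\pi$ of norm $\le n$ divides $n!$, so $a\equiv -M^3$ modulo a large modulus. Hence $-a$ is a cubic residue modulo every prime $p\equiv1\pmod 3$ with $p\le n$ and $p\nmid a$. By cubic reciprocity, $\left(\frac{p}{\pi_a}\right)_3$-type conditions then constrain the residue of $p$ modulo primes dividing $a$. Pick a prime $p\le n$, $p\equiv1\pmod3$, for which $-a$ is not a cube: this exists by an explicit Chebotarev or character-sum bound for cubic characters of conductor $\ll|a|\le n$. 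For small $n$, check directly. This yields $n\ge6$.

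\emph{Even $k\ge4$ and quintic exponents.} For $k=2j$ with $j\ge2$, write $n!-a=(m^j)^2$ and factor cyclotomically. For $k=4$, use $m^4+a$ together with quadratic reciprocity: $a$ must be a square modulo every odd prime $p\le n$ not dividing $a$, which fails using a small prime quadratic nonresidue of $a$. An explicit least-nonresidue bound (Burgess/Pólya--Vinogradov with explicit constants, plus a finite check) handles all $n\ge8$. General even $k$ reduces either to $4\mid k$ or to $k=2\ell$ with $\ell$ odd, and the latter reduces to the cubic, quintic or odd cases via the earlier theorems. I expect the paper handles this through Corollary~\ref{cor:large-even}, using valuation growth for large $k$.

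The $5\mid k$ case works the same way with quintic residue symbols in $\mathbb Q(\zeta_5)$. We need a prime $p\equiv1\pmod5$, $p\le n$, at which $-a$ is not a fifth power. An effective Chebotarev bound gives such a $p$ for $n\ge N_5$, and computation covers $n\le1100$.

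The main obstacle is this last step, together with its cubic analogue: producing an \emph{explicit} small prime in a prescribed power-residue class uniformly in $a$ with $|a|\le n$. Unconditionally, only an effective but uncomputed threshold is available, which is why the quintic row carries $N_5$.
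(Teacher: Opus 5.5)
Your proposal has genuine gaps. The most basic one is that you never establish the valuation dichotomy of Lemma~\ref{lem:dichotomy}. That lemma says the displacement $|a|\le n$ is either a perfect $k$-th power $c^k$, with $m=cb$ and $\gcd(c,b)=1$, or a prime in $(n/2,n]$.

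Your residue-symbol strategy cannot work in the first alternative. There $-a$ is itself a perfect $k$-th power: automatically for odd $k$, and for $k=4$ because Theorems~\ref{thm:lower} and~\ref{thm:classification} force $n!=u^4(v^4-1)$ with $u^4\le n$. So $-a$ is a $k$-th power residue modulo every prime, and no nonresidue prime exists. This alternative contains the classical equations $n!\pm1=M^3$ and $n!+1=m^4$. For $k=4$ it is the only alternative left, so your quadratic-nonresidue plan there is vacuous.

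The paper excludes power displacements by size, not residues. For an odd prime $\ell$, the factor $C=(b^\ell-\varepsilon)/(b-\varepsilon)$ divides $n!$, its prime factors other than $\ell$ are $\equiv1\pmod{2\ell}$, and $v_\ell(C)\le1$. Hence $C\le\ell T(n,2\ell)$, which contradicts $C>\frac23((n-1)!)^{(\ell-1)/\ell}$ by Lemmas~\ref{lem:cyclotomic}, \ref{lem:EO} and~\ref{lem:comparison}. For fourth powers, $C=v^2+1\le2T_1(n)$ gives $n!<4nT_1(n)^2$, which Proposition~\ref{prop:quartic-global} refutes with the $\chi_4$ estimate.

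Similarly, for $k=2\ell$ with $\ell\ge5$ there is no complete unconditional exponent-$\ell$ theorem to reduce to. What works instead is a combination: evenness excludes the prime alternative (Theorem~\ref{thm:classification}), and Theorem~\ref{thm:odd-shift} excludes the $\ell$-th power alternative.

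In the prime alternative you need a nonresidue prime at most $n$, hence essentially below the displacement prime, for every such prime. The tools you name do not give this. Unconditional effective Chebotarev bounds give a prime of norm at most a large fixed power of the discriminant, which for $L_p$ has size $p^{16}$. Character-sum bounds of P\'olya--Vinogradov type in $\mathbb Z[\omega]$ reach only norms of roughly the displacement prime up to logarithms.

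The paper's key ideas here are different for the two exponents.
\begin{itemize}
\item For cubes, the displacement prime satisfies $q\equiv\pm1\pmod9$, because $9\mid n!$. This makes the character $\chi_q$ modulo $qE$ trivial on units and on $\lambda$; it is automatically trivial on rational integers. Cubic reciprocity plus the Hermite bound on an index-$q$ sublattice of $\mathbb Z[\omega]$ then produces $\alpha$ with $0<N(\alpha)<q$ and $\chi_q(\alpha)\ne1$ (Lemma~\ref{lem:cubic-nonresidue}).
\item For a fifth-power displacement prime $p$, the paper compares $p^{19/20}$-smooth ideals, of density at least $\frac c3(1-\log\frac{41}{19})$, with the Kummer kernel, of density $c/15$ by the convexity bound. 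This yields a detecting prime $q\le p^{19/20}$ (Theorem~\ref{thm:quintic-analytic-detector}).
\end{itemize}

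Your bounded-base sketch also contains two errors. First, if $p\in\{n+1,n+2\}$ divides $m$, then $a\equiv n!\not\equiv0\pmod p$, so $p\nmid a$, not $p\mid a$. Second, $p^{\min(kv_p(m),v_p(n!))}$ need not exceed $n$. The paper instead uses Wilson's theorem to reduce to $q^E=n!\pm1$ or $n!/p=p^E\mp1$. It then kills these with the $2$-adic bound $2^{v_2(B^E-\delta)}\le(B+1)E$, since $2^{v_2(n!)}>(n+3)(n-1)$ (Theorem~\ref{thm:bounded-base}).
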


The \emph{Generalized Riemann Hypothesis} (GRH) for a Dedekind zeta function asserts that every nontrivial zero, that is, every zero in $0<\operatorname{Re}s<1$, has real part $1/2$.
For a prime $p\ne5$, put $L_p=\mathbb Q(\zeta_5,p^{1/5})$, where $\zeta_5$ is a primitive fifth root of unity.
Our \emph{fifth-power GRH hypothesis} is this assertion for $\zeta_{L_p}$ for every prime $p\ge1100$ satisfying $p^4\equiv1\pmod{25}$.
The next result is conditional on this unproved hypothesis.

\begin{theorem}\label{thm:main-quintic}
Under the preceding fifth-power GRH hypothesis,
\[
|n!-m^k|>n
\]
for every integer $n\ge8$ and all integers $m,k\ge2$ with $5\mid k$.
\end{theorem}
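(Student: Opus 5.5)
Since $5\mid k$, I would write $m^k=M^5$ with $M=m^{k/5}$, so it suffices to show $|n!-M^5|>n$ for every $M\ge2$ and $n\ge8$. Theorem~\ref{thm:main-unconditional} already settles $8\le n\le1100$ via Corollary~\ref{cor:quintic-finite}, so only $n\ge1101$ needs proof. Suppose $n!+a=M^5$ with $0<|a|\le n$; the case $a=0$ is impossible because $v_p(n!)=1$ for a prime $p\in(n/2,n]$. The plan follows the structure that presumably lies behind Theorem~\ref{thm:quintic-eventual}. The unconditional argument needs $n\ge N_5$ only to control an error term. Under GRH, the explicit formula gives that control already from $n=1101$.

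\textbf{Step 1 (reduction to a residue condition).} From $M^5\equiv a\pmod{p}$ for every prime $p\le n$, we get $p\nmid a$ whenever $p>|a|$, and then $a$ is a fifth power residue modulo $p$. Every prime $p\equiv1\pmod5$ with $|a|<p\le n$ therefore splits completely in $\mathbb Q(\zeta_5,a^{1/5})$. Primes $p\le|a|$ dividing $a$ are handled by valuations: $v_p(a)=v_p(M^5-n!)$ together with $v_p(n!)\ge5$ for small $p$ forces $5\mid v_p(a)$, or gives a contradiction. Hence $a=\pm b^5c$ with $c$ controlled. Writing $a$ as a product of prime powers $q^{e}$, multiplicativity of the quintic character reduces the splitting condition to finitely many Kummer fields. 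The fields $L_p$ of the hypothesis arise for a prime $p\ge1100$ dividing $a$ to an exponent prime to $5$. The condition $p^4\equiv1\pmod{25}$ marks the case where $L_p$ has the expected degree $20$ and no extra ramification collapse. For the other primes, and for primes below $1100$, an unconditional finite computation or a $5$-adic valuation argument should already give a contradiction, since $M^5\equiv a$ modulo $25$ or $11$ imposes conditions.

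\textbf{Step 2 (counting).} The residue condition says that all primes $p\equiv1\pmod 5$ in $(|a|,n]$ split completely in a degree-$20$ field $K=\mathbb Q(\zeta_5,a^{1/5})$. By Chebotarev, only about a $1/20$ proportion of the roughly $\pi(n)/4$ such primes should split, so a contradiction follows once the count is effective on $(n/2,n]$, or even on $(|a|,n]$ with $|a|\le n$. The range $(|a|,n]$ might be short, however, when $|a|$ is close to $n$. I would instead use primes in $(n^{1/2},n]$ and primes $p>n$ by a different route. Alternatively, primes below $|a|$ not dividing $a$ still satisfy the condition, so after removing the few primes dividing $a$ (at most $\log n$ of them) the whole range $(\log^2 n,n]$ is available.

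\textbf{Step 3 (GRH input).} Under GRH for $\zeta_K$, the Lagarias--Odlyzko bound with explicit constants of Oesterl\'e or Bach type gives
\[
\bigl|\pi_{\mathrm{split}}(x,K)-\tfrac1{20}\mathrm{Li}(x)\bigr|\le C\sqrt x\,(\log d_K+20\log x).
\]
Here $\log d_K\ll \log n$ because $d_K$ divides a power of $5\cdot\mathrm{rad}(a)$ and $|a|\le n$. Against this, the residue condition forces $\pi_{\mathrm{split}}(n,K)\ge\pi(n;5,1)-\omega(a)-\pi(\log^2 n)$, which is about $\tfrac14\mathrm{Li}(n)$. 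For $n\ge1101$ the ratio $\tfrac14$ versus $\tfrac1{20}$ swamps an error of size $\sqrt n\log n$; this step needs an explicit numerical check at $n=1101$. I expect the main obstacle to lie here. The explicit constants in the GRH error term must beat the main-term gap already at $n\approx1100$, where $\sqrt n\log n$ is not small compared with $n/\log n$. It is also delicate to ensure that the hypothesis is needed only for the specified fields $L_p$, and not for all the $\mathbb Q(\zeta_5,a^{1/5})$. I would first try a weighted explicit formula with $\log p$ weights and a smooth test function, as in Bach's bounds, to get the sharpest constants. Composite $a$ would be reduced to the prime fields $L_p$ using characters of the Kummer extensions: each $L_p$ is attached to one prime $p\ge1100$ dividing $a$, and all other prime factors are small and treated unconditionally.
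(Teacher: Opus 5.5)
Your proposal has two genuine gaps.

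First, it never disposes of the case where the displacement is itself a fifth power. By Lemma~\ref{lem:dichotomy}, $|n!-M^5|$ is either some $a^5\le n$ or a prime $p$ with $n/2<p\le n$. Nothing in between occurs, so your reduction of composite displacements to the fields $L_p$ is unnecessary; GRH for the $\zeta_{L_p}$ alone would not support it anyway. In the first alternative, e.g.\ $n!\pm1=M^5$, the Kummer field $\mathbb Q(\zeta_5,a^{1/5})$ collapses to $\mathbb Q(\zeta_5)$. Every residue condition then holds automatically, and no Chebotarev or Kummer argument can give a contradiction. This case needs a different idea, namely Theorem~\ref{thm:odd-shift}. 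If $n!=a^5(b^5-\varepsilon)$ with $a^5\le n$, then $C=(b^5-\varepsilon)/(b-\varepsilon)$ divides $n!$. Apart from at most one factor $5$, its prime divisors are $\equiv1\pmod{10}$, so $C\le5T(n,10)$. On the other hand $C>\frac23\bigl((n-1)!\bigr)^{4/5}$, and Lemma~\ref{lem:comparison}, built on the Erd\H{o}s--Obl\'ath bound, makes these incompatible.

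Second, the counting step fails exactly where you need it. With displacement $\pm p$, the residue condition says that every prime $q\equiv1\pmod5$ with $q\le n$, $q\ne p$, splits completely in $L_p$. The explicit GRH error term in Chebotarev's theorem for $L_p$ involves $\frac1{20}\sqrt x\log|D_{L_p}|$ with $\log|D_{L_p}|\ge16\log p$, plus a term of order $\sqrt x\log x$. With the available Lagarias--Odlyzko--Oesterl\'e constants, at $x\approx1100$ this already exceeds the total number ($45$) of primes $q\equiv1\pmod5$ below $1100$. So the densities $\frac14$ versus $\frac1{20}$ decide nothing until $x$ is several orders of magnitude larger.

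The paper does not count. It needs only one prime $q<p$ with $q\equiv1\pmod5$ and $p^{(q-1)/5}\not\equiv1\pmod q$, because $q<p\le n$ then contradicts \eqref{eq:quintic-residue}. Lemma~\ref{lem:quintic-erh} gets such a $q$ from Wesolowski's explicit formula over $K=\mathbb Q(\zeta_5)$ for a quintic ray class character. The decisive input is Lemma~\ref{lem:quintic-exact-conductor}: the necessary congruence $p^4\equiv1\pmod{25}$ makes $p$ a fifth power in $\mathbb Q_5$, so the conductor is exactly $p\mathcal O_K$. This yields $R\le2\log p+\frac3{10}$ and forces $\frac49\le F(p)$, which is false for all $p\ge1100$. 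That congruence is not about the degree: $[L_p:\mathbb Q]=20$ for every prime $p\ne5$ by Lemma~\ref{lem:quintic-fields}.

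Finally, the threshold concerns the displacement prime, not $n$: for $n\ge1101$ you only know $p>550$. The paper removes $p<1100$ unconditionally in Proposition~\ref{prop:quintic-finite}. Its sieve needs the test at $31$ besides those modulo $25$ and $11$; the latter two alone let $p=43,199,307$ through.
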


Theorem~\ref{thm:quintic-erh} and Corollary~\ref{cor:quintic-multiples} prove this assertion.
The value of $N_5$ is not computed, so no overlap between the unconditional finite and eventual ranges is established.
Complete fifth-power separation therefore remains conditional.
The general conjecture remains open; Corollary~\ref{cor:final-reduction} describes the residual equations, including the unresolved square case.

\subsection{Relation to earlier work and contributions}\label{sec:contributions}

\emph{Established inputs.}
The progression-product estimate in Lemma~\ref{lem:EO} is formula~(Va) of Erd\H{o}s and Obl\'ath \cite[p.~248]{ErdosOblath}.
Lemma~\ref{lem:planar-lattice} is the sharp planar Hermite bound \cite{Neumaier}; cubic reciprocity is used in the classical form \eqref{eq:cubic-reciprocity} \cite{IrelandRosen}.
Other standard inputs are the global Chebyshev estimate of Rosser and Schoenfeld \cite[Theorem~12]{RosserSchoenfeld}, the ideal-counting, Mertens and Hecke convexity estimates collected in Lemma~\ref{lem:quintic-analytic-inputs} \cite{GarciaLee,Zaman}, and the conductor--discriminant formula \cite{Milne}.

\emph{Adaptations of earlier arguments.}
Theorem~\ref{thm:classification} organizes the quadratic arguments in \cite{Mathphile} into a common parametrization of every possible square displacement.
For odd prime exponents, Theorem~\ref{thm:odd-shift} extends the classical coprime factorial equations treated in \cite{ErdosOblath} to
\[
n!=a^\ell(b^\ell\pm1),\qquad a^\ell\le n.
\]
The added factor $a^\ell$ is controlled uniformly by $n!/a^\ell\ge(n-1)!$.
For fifth powers, Lemma~\ref{lem:quintic-erh} specializes the explicit formulas of Wesolowski \cite[Section~3]{Wesolowski} over $\mathbb Q(\zeta_5)$.
The exact conductor $p\mathcal O_K$ from Lemma~\ref{lem:quintic-exact-conductor} gives the comparison needed for all relevant primes $p\ge1100$.

\emph{Principal contributions.}
The results for the strict bound fall into three groups.
\begin{enumerate}
\item \emph{Elementary and structural exclusions.}
Theorem~\ref{thm:bounded-base} uses Wilson's theorem and $2$-adic lifting of exponents to prove separation under $P^+(m)\le n+2$ for every exponent $k\ge2$.
Consequently every counterexample, including a square counterexample, must have $P^+(m)>n+2$.
At odd prime exponents, the valuation dichotomy and the scaled exclusion leave only prime displacements $n/2<p\le n$.

\item \emph{Complete cubic and even-exponent separation.}
Lemma~\ref{lem:cubic-nonresidue} shows that each prime $q>3$ with $q\equiv\pm1\pmod9$ admits a prime $r<q$, $r\equiv1\pmod3$, for which $q$ is not a cube modulo $r$.
The proof combines cubic reciprocity with the planar lattice bound and yields Theorem~\ref{thm:cubic-separation}.
For fourth powers, put
\[
T_1(n)=\prod_{\substack{q\le n\text{ prime}\\q\equiv1\pmod4}}q^{v_q(n!)},
\]
where $v_q(n!)$ is the exponent of $q$ in the prime factorization of $n!$.
Proposition~\ref{prop:quartic-global} proves $4nT_1(n)^2<n!$ for $n\ge8$.
This comparison absorbs the factor $u^4\le n$ in $n!=u^4(v^4-1)$, extending the reduction beyond the classical equation studied in \cite{PollackShapiro}.
Its uniform character estimate and six short interval comparisons give the complete fourth-power and even-exponent results without an external computational certificate.

\item \emph{Fifth powers and multiples of five.}
An independent elementary proof of an initial range is given in Theorem~\ref{thm:quintic-elementary-range}, followed by the congruence argument for Corollary~\ref{cor:quintic-finite}.
Theorem~\ref{thm:quintic-analytic-detector} compares a quintic character kernel with ideals having small prime ideal factors, giving a detecting prime $q\le p^{19/20}$ for every sufficiently large prime $p$.
The small-prime-factor principle has a precedent in the cubic-residue discussion \cite{Cagarrinho}; the application here gives Theorem~\ref{thm:quintic-eventual}.
Under the specified GRH hypothesis, Lemma~\ref{lem:quintic-erh} gives a detecting prime $q<p$ for every relevant $p\ge1100$.
Together with the finite congruence argument, this proves the stated conditional result.
\end{enumerate}

Section~\ref{sec:valuation} gives the elementary reductions and the bounded-base theorem; Section~\ref{sec:cyclotomic} excludes power displacements at odd prime exponents.
Sections~\ref{sec:cubic} and \ref{sec:quartic} treat cubes and even exponents, and Section~\ref{sec:quintic} proves the fifth-power results.

\section{Elementary reductions}\label{sec:valuation}

We use standard valuation identities, Bertrand's postulate and quadratic reciprocity; see \cite{HardyWright}.
The factorial-specific reductions are proved below.

For a prime $p$ and a positive integer $u$, write $v_p(u)$ for the exponent of $p$ in the prime factorization of $u$.
We use the elementary identity
\begin{equation}\label{eq:valuation-difference}
v_p(u-v)=\min\{v_p(u),v_p(v)\}
\quad\text{if }v_p(u)\ne v_p(v),
\end{equation}
where the valuation of a nonzero negative integer means that of its absolute value.

\begin{lemma}\label{lem:valuation}
Let $n\ge4$ and $1\le d\le n$ be integers.
For every prime $p\le n/2$,
\begin{equation}\label{eq:valuation-bound}
v_p(n!)>v_p(d).
\end{equation}
\end{lemma}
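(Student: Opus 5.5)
The plan is to compare both sides of \eqref{eq:valuation-bound} through Legendre's formula. Fix a prime $p\le n/2$ and write $e=v_p(d)$, so that $p^e\mid d$ and hence $p^e\le d\le n$. If $e=0$, the claim is immediate, because $p\le n$ gives $v_p(n!)\ge\lfloor n/p\rfloor\ge1>0$. So assume $e\ge1$. The aim is the chain
\[
v_p(n!)=\sum_{j\ge1}\left\lfloor \frac{n}{p^j}\right\rfloor\ \ge\ \sum_{j=1}^{e}\left\lfloor\frac{n}{p^j}\right\rfloor\ \ge\ e,
\]
with strict inequality somewhere. Each term with $j\le e$ is at least $1$, since $p^j\le p^e\le n$. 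The lower bound $e$ is therefore free, and the whole task is to extract one extra unit.

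\emph{Extra unit from the first term.} I would use the hypothesis $p\le n/2$ here, which gives $\lfloor n/p\rfloor\ge2$. The $j=1$ term alone then contributes at least $2$, while the terms $j=2,\dots,e$ contribute at least $1$ each. This yields $v_p(n!)\ge 2+(e-1)=e+1>e=v_p(d)$. The case $e=1$ fits the same count, since then $v_p(n!)\ge\lfloor n/p\rfloor\ge2>1$.

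\emph{Main point to check.} The only thing to watch is that every index $j\le e$ really satisfies $p^j\le n$. This follows from $p^j\mid d$ together with $d\le n$, because $d\ge1$. No obstacle remains beyond recording this. The assumption $n\ge4$ serves only to ensure that primes $p\le n/2$ exist, and otherwise plays no role. I would also note that the bound $p\le n/2$ is sharp for this argument. For $n/2<p\le n$ and $d=p$ one has $v_p(n!)=1=v_p(d)$, which is exactly why the later reductions in the paper isolate prime displacements in $(n/2,n]$.
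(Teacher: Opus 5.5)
Your proof is correct. It takes a slightly different and leaner route than the paper's.

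The paper also starts from Legendre's formula and $p^e\le d\le n$, but it splits by the size of $e=v_p(d)$. For $e=1$ it uses the two factors $p$ and $2p$ of $n!$, which is the only place $p\le n/2$ enters. For $e\ge2$ it keeps just the first two Legendre terms and bounds them exponentially: $\lfloor n/p\rfloor+\lfloor n/p^2\rfloor\ge p^{e-1}+p^{e-2}\ge 3\cdot2^{e-2}>e$. That case does not use $p\le n/2$ at all, but it needs a small induction for $3\cdot2^{e-2}>e$.

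You instead use $p\le n/2$ uniformly to get $\lfloor n/p\rfloor\ge2$. You then count the terms $j=2,\dots,e$, each at least $1$ because $p^j\le p^e\le n$. This gives $v_p(n!)\ge e+1$ with no case split and no auxiliary growth inequality.

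The paper's bound is much stronger for large $e$. However, the later reductions (Lemma~\ref{lem:dichotomy} and its uses) only need the strict inequality $v_p(n!)>v_p(d)$, so your linear bound loses nothing. Your closing remark that $d=p$ with $n/2<p\le n$ gives equality $v_p(n!)=1=v_p(d)$ is also correct.
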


\begin{proof}
Put $e=v_p(d)$.
If $e=0$, the result follows from $p\mid n!$.
If $e=1$, both $p$ and $2p$ occur among the factors of $n!$, so $v_p(n!)\ge2>e$.
Suppose that $e\ge2$.
Since $p^e\le d\le n$, counting multiples of $p$ and of $p^2$ gives
\begin{align*}
v_p(n!)
&\ge \left\lfloor\dfrac np\right\rfloor+
      \left\lfloor\dfrac n{p^2}\right\rfloor\\
&\ge p^{e-1}+p^{e-2}
 \ge 3\cdot2^{e-2}>e.
\end{align*}
The last inequality holds for $e=2$ and then follows inductively.
\end{proof}

\begin{lemma}\label{lem:dichotomy}
Let $n\ge4$, $m\ge1$, and $k\ge2$ be integers, and suppose that
\[
1\le d=|n!-m^k|\le n.
\]
Then either $d=a^k$ for some integer $a\ge1$, or $d$ is a prime $p$ satisfying $n/2<p\le n$.
In the first case, $m=ab$ for an integer $b\ge1$ with $\gcd(a,b)=1$.
\end{lemma}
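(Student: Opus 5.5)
The plan is to compare $p$-adic valuations prime by prime, using Lemma~\ref{lem:valuation} together with identity \eqref{eq:valuation-difference}. Write $d=|n!-m^k|$ with $1\le d\le n$.

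First I treat the primes $p\le n/2$. For such $p$, Lemma~\ref{lem:valuation} gives $v_p(n!)>v_p(d)$. I claim that then $v_p(m^k)=v_p(d)$. Indeed, suppose $v_p(m^k)\ne v_p(n!)$. Then \eqref{eq:valuation-difference} gives $v_p(d)=\min\{v_p(n!),v_p(m^k)\}$, and since $v_p(d)<v_p(n!)$ this minimum must be $v_p(m^k)$. Suppose instead $v_p(m^k)=v_p(n!)$. Then $m^k=n!\mp d$, and because $v_p(d)<v_p(n!)$, \eqref{eq:valuation-difference} gives $v_p(m^k)=v_p(d)<v_p(n!)$, a contradiction. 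So in every case $v_p(d)=v_p(m^k)=k\,v_p(m)$, and hence $k\mid v_p(d)$ for all $p\le n/2$.

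Next I treat the primes $p>n/2$ dividing $d$. Since $d\le n<2p$, such a prime divides $d$ exactly once, and $d/p<2$ forces $d=p$. Hence either $d$ has no prime factor exceeding $n/2$, or $d$ is a prime $p$ with $n/2<p\le n$. The second option is the prime alternative of the lemma. In the first option every prime exponent of $d$ is divisible by $k$, so $d=a^k$ with $a\ge1$; this includes $d=1$, where $a=1$.

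Finally I verify the factorization $m=ab$ in the case $d=a^k$. For each prime $p\mid a$ we have $p\le a\le d\le n$, and also $p\le n/2$, since a prime $p>n/2$ with $p\mid d$ would force $d=p$ and then $k\,v_p(a)=1$, which is impossible for $k\ge2$. So the first step applies and gives $k\,v_p(m)=v_p(d)=k\,v_p(a)$, that is, $v_p(m)=v_p(a)$ for every $p\mid a$. Therefore $a\mid m$, and $b=m/a$ is coprime to $a$: every prime of $a$ occurs in $m$ to exactly the same power, so none of it is left over in $b$.

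The main point requiring care is the case $v_p(m^k)=v_p(n!)$ in the first step, which must be ruled out by the argument by contradiction above. The rest is bookkeeping.
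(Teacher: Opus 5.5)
Your proof is correct and follows essentially the paper's argument: Lemma~\ref{lem:valuation} with \eqref{eq:valuation-difference} at primes $p\le n/2$, the inequality $d\le n<2p$ for larger prime divisors, and then $v_p(m)=v_p(a)$ to split off $a$ with $\gcd(a,b)=1$. The differences are cosmetic. The paper bounds the primes of $a$ by $a\le n^{1/k}\le\sqrt n\le n/2$, where you instead appeal to the prime alternative. Also, your case split in the first step is unnecessary, since applying \eqref{eq:valuation-difference} directly to $m^k=n!\mp d$ gives $v_p(m^k)=v_p(d)$ at once.
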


\begin{proof}
For every prime divisor $p$ of $d$ with $p\le n/2$, Lemma~\ref{lem:valuation} and \eqref{eq:valuation-difference} give
\[
v_p(m^k)=v_p(d).
\]
Therefore every such exponent $v_p(d)$ is divisible by $k$.
If all prime divisors of $d$ are at most $n/2$, this proves $d=a^k$, including $a=1$ when $d=1$.
If $d$ has a prime divisor $p>n/2$, the inequality $d\le n<2p$ forces $d=p$.

Now suppose $d=a^k$.
Every prime divisor of $a$ is at most
$a\le n^{1/k}\le\sqrt n\le n/2$.
The valuation equality above gives $v_p(m)=v_p(a)$ for each such prime.
Hence $a\mid m$, and after writing $m=ab$, no prime divisor of $a$ divides $b$.
\end{proof}

The following is the standard perfect-power consequence of Bertrand's postulate; see \cite{HardyWright}.

\begin{lemma}\label{lem:not-power}
For every integer $n\ge2$, the integer $n!$ is not a nontrivial perfect power.
\end{lemma}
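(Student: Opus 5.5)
The plan is to use Bertrand's postulate to produce a prime that divides $n!$ exactly once. The cases $n=2,3$ are checked directly: $2!=2$ and $3!=6$ are not nontrivial perfect powers, since $2$ is prime and $6$ is squarefree and greater than $1$. For $n\ge4$, I would apply Bertrand's postulate to the integer $\lfloor n/2\rfloor\ge2$. This gives a prime $p$ with
\[
\lfloor n/2\rfloor<p<2\lfloor n/2\rfloor\le n .
\]
So $p\le n$, and $p>n/2$: indeed $p\ge\lfloor n/2\rfloor+1>n/2$.

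Next I compute $v_p(n!)$ by Legendre's formula. Because $n/2<p\le n$, we have $1\le n/p<2$, so $\lfloor n/p\rfloor=1$. Moreover $p^2>n^2/4\ge n$ for $n\ge4$, so every higher term $\lfloor n/p^j\rfloor$ with $j\ge2$ vanishes. Hence $v_p(n!)=1$.

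Now suppose $n!=m^k$ with $m,k\ge2$. Then $v_p(n!)=k\,v_p(m)$ is divisible by $k\ge2$, which is impossible since $v_p(n!)=1$. This contradiction proves the lemma.

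The only point needing care is the choice of Bertrand interval, which must guarantee both $p\le n$ and $p>n/2$ for every parity of $n$. Applying the postulate to $\lfloor n/2\rfloor$ (or to $\lceil n/2\rceil$ when $n$ is odd) handles both parities. The small cases $n=2,3$ are included only because there $\lfloor n/2\rfloor=1$, and Bertrand's postulate does not apply to $1$.
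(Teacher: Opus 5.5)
Your proof is correct and follows the route the paper intends: the paper gives no written proof, but cites the lemma as the standard consequence of Bertrand's postulate, namely a prime $n/2<p\le n$ with $v_p(n!)=1$. Your handling of $n=2,3$ and the check $p^2>n$ are fine; the non-strict form of Bertrand's postulate would even absorb the two small cases, but this is not needed.
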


\subsection{Squares below a factorial}\label{sec:lower}

\begin{theorem}\label{thm:lower}
Let $n\ge4$ and $m\ge1$ be integers.
If $m^2<n!$, then
\begin{equation}\label{eq:lower}
n!-m^2>n.
\end{equation}
\end{theorem}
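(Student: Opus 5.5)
Suppose, for contradiction, that $1\le d=n!-m^2\le n$. The plan is to apply Lemma~\ref{lem:dichotomy} with $k=2$, which leaves two cases: either $d=a^2$ with $m=ab$ and $\gcd(a,b)=1$, or $d=p$ is a prime with $n/2<p\le n$. Each case will be ruled out by a separate congruence argument. The small indices $n=4,5$ are disposed of first by inspection. The numbers $24-d$ for $1\le d\le4$ are $23,22,21,20$, and the numbers $120-d$ for $1\le d\le5$ are $119,118,117,116,115$; none of these is a square. From now on assume $n\ge6$, so that $3\le n/2$.

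\emph{Square displacement.} If $d=a^2$, then
\[
n!=a^2b^2+a^2=a^2(b^2+1).
\]
Since $a^2\le n$ and $3\le n/2$, Lemma~\ref{lem:valuation} gives $v_3(n!)>v_3(a^2)$. Hence $3\mid b^2+1$. This is impossible, because $-1$ is not a square modulo $3$.

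\emph{Prime displacement.} If $d=p$ with $n/2<p\le n$, then $m^2=n!-p$.
\begin{enumerate}
\item Since $n!$ is even and $p$ is odd (because $p>n/2\ge3$), $m$ is odd. Then $8\mid n!$ gives $m^2\equiv1\pmod 8$, and so $p\equiv-1\equiv7\pmod8$.
\item For every odd prime $q<p$, we have $q\le n$, so $q\mid n!$ and $q\nmid p$. Hence $m^2\equiv-p\pmod q$ with $-p\not\equiv0$, which gives $\leg{-p}{q}=1$.
\item Quadratic reciprocity, together with $p\equiv3\pmod4$, gives
\[
\leg{-p}{q}=\leg{-1}{q}\leg{p}{q}=\leg{-1}{q}\leg{q}{p}(-1)^{\frac{p-1}{2}\frac{q-1}{2}}=\leg{q}{p}.
\]
So every odd prime $q<p$ is a quadratic residue modulo $p$.
\item The congruence $p\equiv7\pmod8$ makes $2$ a quadratic residue modulo $p$ as well.
\end{enumerate}
By multiplicativity of the Legendre symbol, every integer in $[1,p-1]$ is then a quadratic residue modulo $p$. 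This contradicts the existence of $(p-1)/2\ge1$ nonresidues modulo the odd prime $p$.

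The step needing the most care is the prime case. One must check that the reciprocity sign collapses exactly to $\leg{q}{p}$, and that the $2$-adic information ($8\mid n!$, valid for $n\ge4$) supplies the residue character of the prime $2$, which the odd-prime argument does not reach. Once these two points are in place, the contradiction is immediate, and no estimate on the least quadratic nonresidue is required beyond the trivial bound $<p$.
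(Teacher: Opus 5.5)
Your proof is correct, but in the prime case it reaches the contradiction by a different mechanism from the paper. The paper does not use reciprocity there. From $p\equiv7\pmod8$ it gets $(p-1)/2\equiv3\pmod4$ and picks a prime divisor $q\equiv3\pmod4$ of $(p-1)/2$. Since $p\equiv1\pmod q$, the equation gives $m^2\equiv-1\pmod q$, which contradicts only the first supplementary law. You instead use quadratic reciprocity, where the sign collapses because $p\equiv3\pmod4$, together with the second supplementary law. This turns the conditions $\leg{-p}{q}=1$ for all odd primes $q<p$ into $\leg{q}{p}=1$ for every prime $q<p$, so every residue modulo $p$ would be a square.

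The paper's route is lighter and constructive: it exhibits an explicit detecting prime $q\le(p-1)/2$ and needs no reciprocity. Your route is non-constructive, but it is the same template as Lemma~\ref{lem:nonresidue}, which the paper needs for squares above $n!$. There $p\equiv1\pmod8$, so a divisor of $p-1$ only yields $m^2\equiv1$ and the paper's trick gives nothing. It is also the quadratic analogue of the later cubic and quintic detecting-prime arguments, so your approach handles both signs of the displacement by one principle.

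The other difference is minor. You dispose of $n=4,5$ by listing $n!-d$. The paper keeps $n\ge4$ throughout: it handles the square case at $n=4,5$ through $a\in\{1,2\}$ and $3\nmid a$, and its prime-case argument applies unchanged to all $n\ge4$.
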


\begin{proof}
Assume, to the contrary, that $d=n!-m^2$ satisfies $1\le d\le n$.
By Lemma~\ref{lem:dichotomy}, $d$ is a square or a prime larger than $n/2$.

First suppose $d=a^2$.
The same lemma gives $m=ab$, so
\begin{equation}\label{eq:sum-squares}
n!=a^2(b^2+1).
\end{equation}
For $n\ge6$, Lemma~\ref{lem:valuation} applied to $p=3$ yields
$v_3(n!)>v_3(a^2)$.
For $n=4,5$, the restriction $a^2\le n$ gives $a\in\{1,2\}$, so the same strict inequality follows from $3\mid n!$ and $3\nmid a$.
Equation~\eqref{eq:sum-squares} therefore implies $3\mid b^2+1$.
This is impossible because a square is congruent to $0$ or $1$ modulo $3$.

Now suppose $d=p$ is a prime with $n/2<p\le n$.
Since $n\ge4$, we have $8\mid n!$, and $p$ is odd.
Thus $m$ is odd, and
\[
m^2=n!-p\equiv-p\pmod8
\]
forces $p\equiv7\pmod8$.
It follows that $(p-1)/2\equiv3\pmod4$.
An odd positive integer congruent to $3$ modulo $4$ has a prime divisor $q\equiv3\pmod4$.
Choose such a divisor of $(p-1)/2$.
Then $q<p\le n$ and $p\equiv1\pmod q$, whence
\[
m^2=n!-p\equiv-1\pmod q.
\]
The integer $-1$ is not a quadratic residue modulo a prime $q\equiv3\pmod4$, giving the final contradiction.
\end{proof}

Applying Theorem~\ref{thm:lower} to $(m^{k/2})^2$ also excludes every even power below $n!$ from distance at most $n$, for $n\ge4$.
The starting index is sharp, since $3!-2^2=2\le3$.

\subsection{Squares above a factorial}\label{sec:squares}

We next exclude the prime alternative in Lemma~\ref{lem:dichotomy} for squares above a factorial.
For an odd prime $p$, let $\leg{u}{p}$ denote the Legendre symbol.

We use the following standard consequence of multiplicativity, quadratic reciprocity, and the supplementary law for $2$; see \cite{HardyWright}.

\begin{lemma}\label{lem:nonresidue}
If $p$ is a prime with $p\equiv1\pmod8$, there is an odd prime $q<p$ such that $\leg{p}{q}=-1$.
\end{lemma}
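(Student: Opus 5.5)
The plan is to construct an odd modulus $t<p$ with Jacobi symbol $\left(\frac{p}{t}\right)=-1$, and then pass to a prime factor of $t$. Since $p\equiv1\pmod8$, we have $p\equiv1\pmod4$, so quadratic reciprocity in its Jacobi form gives
\[
\Bigl(\frac{p}{t}\Bigr)=\Bigl(\frac{t}{p}\Bigr)
\]
for every odd positive $t$ coprime to $p$. By multiplicativity, if $\left(\frac{p}{t}\right)=-1$, then some prime $q\mid t$ satisfies $\leg{p}{q}=-1$. Such a $q$ is odd and satisfies $q\le t<p$. It therefore suffices to find an odd $t$ with $1\le t<p$ and $\left(\frac{t}{p}\right)=-1$.

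To find $t$, choose a quadratic nonresidue $g$ modulo $p$ with $1\le g\le p-1$. If $g$ is odd, take $t=g$. If $g$ is even, take $t=p-g$, which is odd and lies in $[1,p-1]$. Then
\[
\Bigl(\frac{t}{p}\Bigr)=\Bigl(\frac{-1}{p}\Bigr)\Bigl(\frac{g}{p}\Bigr)=-1,
\]
because $p\equiv1\pmod4$ makes $-1$ a residue. The supplementary law for $2$, applied with $p\equiv1\pmod8$, also shows that $2$ is a residue. That fact is not needed in this argument, but it explains why the resulting prime $q$ cannot be $2$ in general.

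The only step needing care is the passage from the Jacobi symbol to a prime factor. Here one must note that $t$ is coprime to $p$, which holds since $1\le t<p$, and that $t\ne1$, which holds because $\left(\frac{1}{p}\right)=1$. Then $\left(\frac{p}{t}\right)=\prod_{q\mid t}\leg{p}{q}^{v_q(t)}=-1$ forces at least one factor $\leg{p}{q}$ to equal $-1$. I expect no genuine obstacle, since this is the standard argument cited from \cite{HardyWright}.
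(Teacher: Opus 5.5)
Your proof is correct. Its skeleton matches the paper's: take a quadratic nonresidue below $p$, pass to a prime factor by multiplicativity, and apply reciprocity using $p\equiv1\pmod4$.

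The difference is how you force $q$ to be odd. The paper takes an arbitrary nonresidue $u<p$ and extracts a prime $q\mid u$ with $\leg{q}{p}=-1$. It then excludes $q=2$ by the second supplementary law $\leg{2}{p}=1$, which is exactly where $p\equiv1\pmod8$ is used, and finishes with Legendre reciprocity between the odd primes $p$ and $q$. You instead use the first supplementary law $\leg{-1}{p}=1$ to replace an even nonresidue $g$ by the odd nonresidue $p-g$, so every prime factor is automatically odd.

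Your route therefore proves the statement under the weaker hypothesis $p\equiv1\pmod4$. The paper's route needs $p\equiv1\pmod8$: for $p\equiv5\pmod8$ the nonresidue could be $u=2$. That costs nothing, since the application in Theorem~\ref{thm:classification} supplies precisely $p\equiv1\pmod8$.

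You also do not need Jacobi reciprocity. Factoring $t$ first, choosing a prime $q\mid t$ with $\leg{q}{p}=-1$, and applying Legendre reciprocity to that $q$ gives the same conclusion. With that change, the two proofs differ only in the parity step.

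Finally, your passing remark that $2$ is a residue does not explain anything in your own argument. It is in fact the paper's mechanism for ruling out $q=2$.
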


\begin{proof}
Choose $1\le u<p$ with $\leg{u}{p}=-1$; such a $u$ exists because precisely half the nonzero residues modulo $p$ are squares.
By multiplicativity, some prime divisor $q$ of $u$ satisfies $\leg{q}{p}=-1$.
The supplementary law gives $\leg{2}{p}=1$, so $q$ is odd, and $q\le u<p$.
Since $p\equiv1\pmod4$, quadratic reciprocity yields $\leg{p}{q}=\leg{q}{p}=-1$.
\end{proof}

\begin{theorem}\label{thm:classification}
Let $n\ge4$ and $m\ge1$ be integers.
If $|n!-m^2|\le n$, then there are integers $a,b\ge1$ such that
\begin{equation}\label{eq:scaled}
m=ab,\qquad
m^2-n!=a^2,\qquad
a^2\le n,\qquad
n!=a^2(b^2-1),
\end{equation}
with $\gcd(a,b)=1$ and $b$ odd.
In addition,
\begin{equation}\label{eq:square-coprimality}
\gcd(b,n!)=1,\qquad \gcd(m,n!)=a,\qquad b\ge n+1.
\end{equation}
Conversely, any positive integers $a,b$ satisfying $a^2\le n$ and $n!=a^2(b^2-1)$ give a square $(ab)^2$ within distance $n$ above $n!$.
\end{theorem}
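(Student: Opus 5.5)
The plan is to reduce everything to the case $m^2>n!$ and then to the scaled-square alternative of Lemma~\ref{lem:dichotomy}.
Lemma~\ref{lem:not-power} rules out $m^2=n!$, and Theorem~\ref{thm:lower} rules out $m^2<n!$ with $n!-m^2\le n$.
So $d=m^2-n!$ satisfies $1\le d\le n$.
By Lemma~\ref{lem:dichotomy}, either $d=a^2$ with $m=ab$ and $\gcd(a,b)=1$, or $d=p$ is a prime with $n/2<p\le n$.

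The main step is to exclude the prime case, and I would do this with Lemma~\ref{lem:nonresidue}.
Since $8\mid n!$ and $p$ is odd, $m$ is odd and $m^2=n!+p\equiv p\pmod 8$, so $p\equiv1\pmod8$.
Lemma~\ref{lem:nonresidue} then gives an odd prime $q<p\le n$ with $\leg{p}{q}=-1$.
Because $q\mid n!$, we get $m^2\equiv p\pmod q$.
Here $q\nmid p$, so $p$ would be a nonzero square modulo $q$, a contradiction.

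In the square case, $a^2=d\le n$ and $n!=m^2-a^2=a^2(b^2-1)$, which gives \eqref{eq:scaled}.

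For the parity and coprimality claims, first take a prime $r\le n$.
\begin{itemize}
\item If $r\le n/2$, Lemma~\ref{lem:valuation} gives $v_r(n!)>v_r(a^2)$, so $r\mid b^2-1$ and hence $r\nmid b$.
\item If $n/2<r\le n$, then $r>\sqrt n\ge a$, so $r\nmid a$, and $r\mid n!=a^2(b^2-1)$ forces $r\mid b^2-1$, so again $r\nmid b$.
\end{itemize}
Thus $\gcd(b,n!)=1$. Taking $r=2$ gives $b$ odd.

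Next, $\gcd(m,n!)=\gcd(ab,a^2(b^2-1))$. Since $\gcd(b,a^2(b^2-1))=1$ (because $\gcd(b,a)=1$ and $\gcd(b,b^2-1)=1$), this equals $a\gcd(b,a(b^2-1))=a$.

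Finally, $b>1$ because $b^2-1\ge n!/n>0$. Every prime factor of $b$ exceeds $n$, so $b\ge n+1$.

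The converse is immediate: $(ab)^2-n!=a^2\le n$.

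The only delicate point I expect is the handling of primes in $(n/2,n]$ for the coprimality claim, which uses $a\le\sqrt n<n/2$ for $n\ge4$, together with the edge cases $n=4,5$ in the mod-$8$ step. These are fine because $8\mid 4!$.
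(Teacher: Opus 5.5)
Your proof is correct and follows the paper's argument essentially step for step: the same reduction via Lemma~\ref{lem:not-power}, Theorem~\ref{thm:lower} and Lemma~\ref{lem:dichotomy}, and the same exclusion of the prime displacement by the mod-$8$ congruence and Lemma~\ref{lem:nonresidue}. The only difference is minor and concerns $\gcd(b,n!)=1$. You split primes $r\le n$ at $n/2$, whereas the paper argues directly: a prime $q\le n$ dividing $b$ cannot divide $b^2-1$, so it would have to divide $a$, contradicting $\gcd(a,b)=1$.
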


\begin{proof}
Equality $n!=m^2$ is excluded by Lemma~\ref{lem:not-power}.
Theorem~\ref{thm:lower} excludes $m^2<n!$, so $d=m^2-n!$ satisfies $1\le d\le n$.
By Lemma~\ref{lem:dichotomy}, either $d=a^2$ or $d=p$ is prime with $n/2<p\le n$.

Suppose $d=p$.
Reduction of $m^2=n!+p$ modulo $8$ gives $p\equiv1\pmod8$.
Choose $q<p$ as in Lemma~\ref{lem:nonresidue}.
Since $q\le n$, reduction modulo $q$ gives $m^2\equiv p\pmod q$, contradicting $\leg{p}{q}=-1$.
Therefore $d=a^2$.
Lemma~\ref{lem:dichotomy} now gives $m=ab$ and $\gcd(a,b)=1$, and rearrangement yields \eqref{eq:scaled}.
Finally, Lemma~\ref{lem:valuation} at $p=2$ shows that $n!/a^2=b^2-1$ is even, so $b$ is odd.
If a prime $q\le n$ divided $b$, the identity $n!=a^2(b^2-1)$ would force $q\mid a$, contradicting $\gcd(a,b)=1$.
Thus $b$ is coprime to $n!$, and $\gcd(m,n!)=a$ because $a\mid n!$.
Since $b>1$, it has a prime divisor greater than $n$, giving $b\ge n+1$.
The converse follows directly from $(ab)^2-n!=a^2$.
\end{proof}

For $a=1$, \eqref{eq:scaled} gives the equation in the Brocard--Ramanujan conjecture.
The values $a\ge2$ describe the other possible square displacements.

\subsection{Bases with bounded prime divisors}\label{sec:bounded-base}

For $m\ge2$, let $P^+(m)$ be its largest prime divisor, and put
\[
S_n=2^{v_2(n!)}.
\]
We will use the following elementary growth bound.

\begin{lemma}\label{lem:two-part-growth}
For every $n\ge8$,
\begin{equation}\label{eq:two-part-quadratic}
S_n>(n+3)(n-1).
\end{equation}
\end{lemma}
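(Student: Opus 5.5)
We want $S_n=2^{v_2(n!)}>(n+3)(n-1)$ for $n\ge8$. The plan is to use Legendre's formula in the form $v_2(n!)=n-s_2(n)$, where $s_2(n)$ is the binary digit sum of $n$. Since $s_2(n)\le\lfloor\log_2 n\rfloor+1$, this gives
\[
v_2(n!)\ge n-1-\log_2 n,\qquad S_n\ge \frac{2^{n-1}}{n}.
\]
It then suffices to show $2^{n-1}>n(n+3)(n-1)$ for $n$ beyond a small explicit threshold, and to check the remaining small $n$ directly.

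For the growth inequality, I would set $f(n)=n(n+3)(n-1)$ and compare consecutive ratios. The ratio $f(n+1)/f(n)=\frac{(n+1)(n+4)n}{n(n+3)(n-1)}$ is below $2$ as soon as $(n+1)(n+4)<2(n+3)(n-1)$, that is, $n^2-n-10>0$, which holds for $n\ge4$. So once $2^{n-1}>f(n)$ holds at one base point $n_0\ge4$, induction gives it for all $n\ge n_0$. At $n=12$ we have $2^{11}=2048>12\cdot15\cdot11=1980$, so $n_0=12$ works, and the crude bound covers $n\ge12$.

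The remaining cases $8\le n\le11$ are checked directly, since the crude bound is too weak there.
\begin{itemize}
\item For $n=8$: $v_2(8!)=7$, and $128>11\cdot7=77$.
\item For $n=9$: $v_2(9!)=7$, and $128>12\cdot8=96$.
\item For $n=10$: $v_2(10!)=8$, and $256>13\cdot9=117$.
\item For $n=11$: $v_2(11!)=8$, and $256>14\cdot10=140$.
\end{itemize}
This completes the argument.

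The only delicate point is the transition region. The bound $2^{n-1}/n$ loses a factor of about $n$, so it only becomes effective around $n=12$. That is why the small cases must be handled by exact computation of $v_2(n!)$ rather than by the estimate.
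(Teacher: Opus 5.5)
Your proof is correct, but it takes a different route from the paper. The paper's proof does not use Legendre's formula. It notes that one of $n+1,n+2$ is even, so $S_{n+2}\ge 2S_n$, and it checks $S_8=S_9=128$. It then observes that $f(n)=(n+3)(n-1)$ satisfies $2f(n)-f(n+2)=n^2-2n-11>0$ for $n\ge8$, and a step-two induction from $n=8,9$ finishes.

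You instead replace $S_n$ by the closed-form bound $S_n\ge 2^{n-1}/n$, obtained from $v_2(n!)=n-s_2(n)$. You then run a step-one induction on the cubic comparison $2^{n-1}>n(n+3)(n-1)$ from $n=12$, and check $8\le n\le 11$ by hand. I verified your ratio condition $n^2-n-10>0$, the base case $2048>1980$, and the four small values.

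Your version makes the exponential margin explicit and gives a uniform lower bound on $S_n$ that could be reused. Because that bound discards a factor of $n$, it only becomes effective at $n=12$ and costs four hand-checked cases. The paper's induction works directly with $S_n$ and the quadratic target, so it needs only the two base values $n=8,9$ and no digit-sum input.
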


\begin{proof}
One has $S_8=S_9=128$ and $S_{n+2}\ge2S_n$.
For $f(n)=(n+3)(n-1)$,
\[
2f(n)-f(n+2)=n^2-2n-11>0\qquad(n\ge8).
\]
Induction with step two proves the assertion.
\end{proof}

The standard $2$-adic lifting-the-exponent formulas also give
\begin{equation}\label{eq:LTE-bound}
2^{v_2(B^E-\delta)}\le(B+1)E
\qquad(B\ge3\text{ odd},\ E\ge1,\ \delta\in\{-1,1\}).
\end{equation}
For odd $E$, the relevant $2$-part is that of $B-\delta$.
For even $E$, it is $2$ when $\delta=-1$, and at most $(B+1)2^{v_2(E)}$ when $\delta=1$.
These alternatives give \eqref{eq:LTE-bound} immediately.

\begin{theorem}\label{thm:bounded-base}
Let $n\ge8$ and $m,k\ge2$ be integers.
Then
\begin{equation}\label{eq:bounded-base-theorem}
P^+(m)\le n+2\quad\Longrightarrow\quad |n!-m^k|>n.
\end{equation}
\end{theorem}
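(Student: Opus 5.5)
Suppose, to the contrary, that $d=|n!-m^k|\le n$ with $P^+(m)\le n+2$. Equality $d=0$ is excluded by Lemma~\ref{lem:not-power}, so $1\le d\le n$ and Lemma~\ref{lem:dichotomy} applies. The plan is to show in each of its two branches that the prime factors of $m$ are so restricted that $m$ is essentially a power of one odd prime. Then \eqref{eq:LTE-bound} caps the $2$-part of $n!$, which contradicts Lemma~\ref{lem:two-part-growth}.

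\emph{The branch $d=a^k$.} Lemma~\ref{lem:dichotomy} gives $m=ab$ with $\gcd(a,b)=1$, so
\[
n!=a^k(b^k-\delta),\qquad \delta\in\{-1,1\},\qquad a^k\le n .
\]
First, $b=1$ is impossible. It forces $\delta=-1$ and $n!=2a^k\le 2n$, which fails for $n\ge8$.

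Next, no prime $q\le n$ divides $b$. Such a $q$ would divide $n!$. Since $q\nmid a$, it would then divide $b^k-\delta$, which is impossible because $q\mid b$.

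Combined with $P^+(m)\le n+2$, every prime factor of $b$ lies in $\{n+1,n+2\}$. At most one of these two consecutive integers is an odd prime and the other is even, so $b=P^e$ for a single odd prime $P\le n+2$. Apply \eqref{eq:LTE-bound} with $B=P\ge n+1\ge9$ and $E=ek$ to get
\[
2^{v_2(b^k-\delta)}\le (n+3)\,ek .
\]
The exponent is bounded by $P^{ek}\le n!+n<(n+1)^{n-1}$, so $ek\le n-2$. Hence $2^{v_2(b^k-\delta)}\le (n+3)(n-2)$.

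If $a$ is odd, then $S_n=2^{v_2(b^k-\delta)}$, and this contradicts Lemma~\ref{lem:two-part-growth}. If $a$ is even, the extra factor $2^{v_2(a^k)}$ is at most $a^k\le n$. This leaves only the weaker bound $S_n\le n(n+3)(n-2)$. I would dispose of it using the exponential growth $S_n\ge 2^{n-1-\log_2 n}$ for large $n$, together with a direct check of the few small $n$. A sharper alternative is to use $4\mid a^k$, hence $a^k\le n$ forces $v_2(a^k)\le\log_2 n$ with $k\ge2$, and to absorb this slack.

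\emph{The prime branch $d=p$, $n/2<p\le n$.} Here $p\mid m$. Any other prime $q\le n$ dividing $m$ would divide $n!\pm p$ and hence $p$, which is absurd. So $m=p^s P^e$ with $P$ the odd prime (if any) among $n+1,n+2$, and $m$ is odd because $n!\pm p$ is odd.

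Since $k\ge2$ and $s\ge1$, $p^2\mid m^k=n!\pm p$, so $p\mid n!/p\pm1$. Because $v_p(n!)=1$, Wilson's theorem gives
\[
n!/p\equiv -\prod_{p<j\le n} j \pmod p .
\]
This yields a congruence constraint but not, in general, a contradiction. So the main tool should again be $2$-adic. Reduce $m^k=n!\pm p$ modulo $S_n$ to get $p^{sk}P^{ek}\equiv \pm p\pmod{S_n}$, that is,
\[
p^{sk-1}P^{ek}\equiv\pm1 \pmod{S_n}.
\]

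I expect this branch to be the main obstacle. The quantity $p^{sk-1}P^{ek}\mp1$ involves two odd primes, so \eqref{eq:LTE-bound} does not apply verbatim. What I would try first is to use $p\equiv -P$ or $p\equiv P\cdot(\text{unit})$ information modulo small powers of $2$, or to raise both sides to a suitable power, to reduce to a single base. Failing that, I would treat $p^{sk-1}$ and $P^{ek}$ separately through their $2$-adic orders in $(\mathbb Z/S_n)^\times$. Each of these orders is large relative to $n$ unless the base is $\equiv\pm1$ modulo a large power of $2$, and that is impossible for odd bases below $n+3$ once $S_n>(n+3)(n-1)$. The exponent bound $sk,ek<n$ then controls the orders, as in the first branch.
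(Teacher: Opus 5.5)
There is a genuine gap, and it is where you expected it. Your overall plan matches the paper's: reduce $m$ to a power of one odd prime, then play \eqref{eq:LTE-bound} against Lemma~\ref{lem:two-part-growth}. But in the prime branch you never exclude the possible factor $P\in\{n+1,n+2\}$ of $m$, and your proposed workaround cannot succeed. Knowing that $p$ and $P$ separately have large order in $(\mathbb Z/S_n\mathbb Z)^\times$ says nothing about whether $p^{sk-1}P^{ek}\equiv\pm1\pmod{S_n}$, since two elements of large order can multiply to $\pm1$. Bounding $v_2(p^{A}P^{B}\mp1)$ for two independent bases is a two-logarithm $2$-adic problem, which neither \eqref{eq:LTE-bound} nor an order count resolves.

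The missing idea, and the one the paper uses, is Wilson's theorem at $P$ itself. Write $m^k=n!+\varepsilon d$ with $\varepsilon\in\{-1,1\}$. If $P=n+1$ is prime then $n!\equiv-1\pmod P$. If $P=n+2$ is prime then $n!\equiv1\pmod P$, because $n+1\equiv-1$. So $P\mid m$ in the prime branch would give $\varepsilon p\equiv1$, respectively $\varepsilon p\equiv-1$, modulo $P$. With $1\le p\le n$ this forces $p\in\{1,n\}$, respectively $p\in\{1,n+1\}$, and all of these are impossible because $n=P-1$ is even in the first case. Hence $m=p^s$ and $n!/p=p^{sk-1}-\varepsilon$, with $1\le sk-1\le n-1$ by $n!<((n+1)/2)^n\le p^n$. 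Then \eqref{eq:LTE-bound} gives $S_n\le(n+1)(n-1)$, which contradicts Lemma~\ref{lem:two-part-growth}.

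The same congruence closes the subcase ``$a$ even'' in your first branch, which as written is only a plan. There $P\mid m$ and $m^k=n!+\varepsilon a^k$ give $\varepsilon a^k\equiv1\pmod P$ if $P=n+1$, and $\varepsilon a^k\equiv-1\pmod P$ if $P=n+2$. Since $1\le a^k\le n$, this forces $a=1$, except when $P=n+1$, $\varepsilon=-1$ and $a^k=n$. For even $k$, Theorem~\ref{thm:lower} removes that case. For odd $k$, $(a+1)\mid(a^k+1)=P$ with $1<a+1<P$ removes it. So only your odd-$a$ case survives, and your treatment of it is correct.

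Your suggested fallback for even $a$ would not have been routine. The contradiction needs $S_n>n(n+3)(n-2)$, which fails for every $9\le n\le15$; for instance $S_9=128<756$. So ``a direct check of the few small $n$'' would mean explicitly ruling out $n!=a^k(P^{ek}-\varepsilon)$ for $n\in\{9,10,11,12,15\}$, and the proposal does not do this.
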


\begin{proof}
Suppose that $P^+(m)\le n+2$ and $|n!-m^k|\le n$.
Lemma~\ref{lem:not-power} excludes equality, so write
\[
m^k=n!+\varepsilon d,\qquad 1\le d\le n,\qquad \varepsilon\in\{-1,1\}.
\]
Lemma~\ref{lem:dichotomy} gives two alternatives.

First suppose that $d=a^k$.
Then $m=ab$, $\gcd(a,b)=1$, and
\begin{equation}\label{eq:bounded-base-power}
n!=a^k(b^k-\varepsilon),\qquad a^k\le n.
\end{equation}
No prime at most $n$ divides $b$, since such a prime would also divide $a$, contradicting coprimality.
Also $b>1$, since $b=1$ would imply $n!\le2n$.
Thus every prime divisor of $b$ lies in $\{n+1,n+2\}$.
At most one of these consecutive integers is prime, so $b=q^t$ for a prime $q\in\{n+1,n+2\}$ and an integer $t\ge1$.

Suppose first that $q=n+1$.
Wilson's theorem and $q\mid m$ give
\[
\varepsilon a^k\equiv1\pmod q.
\]
Since $1\le a^k\le q-1$, either $\varepsilon=1$ and $a=1$, or $\varepsilon=-1$ and $a^k=n$.
In the latter case, an even $k$ would contradict Theorem~\ref{thm:lower}, applied to the square $(m^{k/2})^2$.
If $k$ is odd, then $k\ge3$ and $q=a^k+1$ is divisible by $a+1$, with $1<a+1<q$, again a contradiction.
If $q=n+2$, Wilson's theorem gives $n!\equiv1\pmod q$, hence
\[
\varepsilon a^k\equiv-1\pmod q.
\]
The bound $a^k\le q-2$ excludes $\varepsilon=1$, while $\varepsilon=-1$ forces $a=1$.

Consequently $a=1$ in every remaining case.
Put $E=kt$.
The inequalities
\[
q^E=n!+\varepsilon\le n!+1<(n+1)^n\le q^n
\]
give $E\le n-1$.
Taking $2$-parts and using \eqref{eq:LTE-bound}, we obtain
\begin{equation}\label{eq:bounded-base-two-part}
S_n=2^{v_2(q^E-\varepsilon)}\le(q+1)E\le(n+3)(n-1),
\end{equation}
contradicting \eqref{eq:two-part-quadratic}.

It remains to consider $d=p$, where $p$ is prime and $n/2<p\le n$.
Reduction of $m^k=n!+\varepsilon p$ shows that $p\mid m$ and that no other prime at most $n$ divides $m$.
We next exclude the two possible prime divisors just above $n$.
If $q=n+1$ is prime and divides $m$, Wilson's theorem gives
\[
n!\equiv-1\pmod q,\qquad \varepsilon p\equiv1\pmod q.
\]
For $1\le p\le n=q-1$, this would require $p=1$ or $p=n$.
The latter is impossible because $n=q-1\ge8$ is even.
If $q=n+2$ is prime and divides $m$, Wilson's theorem instead gives
\[
n!\equiv1\pmod q,\qquad \varepsilon p\equiv-1\pmod q,
\]
which would require $p=1$ or $p=n+1>n$.
Consequently $m=p^t$ for some $t\ge1$.
With $E=kt-1$, the original equation becomes
\[
\dfrac{n!}{p}=p^E-\varepsilon.
\]
By the strict arithmetic--geometric mean inequality and $n<2p$,
\[
n!<\left(\dfrac{n+1}{2}\right)^n\le p^n.
\]
Since $n!/p$ is an integer, it follows that
\[
p^E=\dfrac{n!}{p}+\varepsilon\le p^{n-1},\qquad E\le n-1.
\]
The prime $p$ is odd, so \eqref{eq:LTE-bound} yields
\[
S_n=2^{v_2(p^E-\varepsilon)}\le(p+1)E\le(n+1)(n-1),
\]
contradicting \eqref{eq:two-part-quadratic}.
\end{proof}

\begin{corollary}\label{cor:large-base-prime}
Every counterexample to Conjecture~\ref{conj:fsc} satisfies $P^+(m)>n+2$.
\end{corollary}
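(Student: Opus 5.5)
This is the contrapositive of Theorem~\ref{thm:bounded-base}, so the argument is short. Take a counterexample to Conjecture~\ref{conj:fsc}: integers $n\ge8$ and $m,k\ge2$ with $|n!-m^k|\le n$. These are exactly the standing hypotheses of Theorem~\ref{thm:bounded-base}, with $m\ge2$, so $P^+(m)$ is defined.

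If $P^+(m)\le n+2$, then \eqref{eq:bounded-base-theorem} gives $|n!-m^k|>n$. This contradicts the assumption that $(n,m,k)$ is a counterexample. Hence $P^+(m)>n+2$.

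No step here is difficult; all the work is in Theorem~\ref{thm:bounded-base}. The only point to check is that the quantifier ranges agree. The conjecture is stated for $n\ge8$ and $m,k\ge2$, which is precisely the range of the theorem.

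It may also help to record the structural consequence in the square case. By Theorem~\ref{thm:classification}, any square counterexample has $m=ab$ with $\gcd(b,n!)=1$. The corollary then adds that $b$ has a prime divisor exceeding $n+2$.
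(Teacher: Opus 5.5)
Your proof is correct and matches the paper, which records this corollary as the immediate contrapositive of Theorem~\ref{thm:bounded-base} under the same hypotheses $n\ge8$, $m,k\ge2$. Your closing remark on the square case agrees with Corollary~\ref{cor:final-reduction}: since $a^2\le n$, every prime divisor of $a$ is at most $n$, so the prime divisor of $m=ab$ exceeding $n+2$ must divide $b$.
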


\section{Cyclotomic exclusion of power displacements}\label{sec:cyclotomic}

For integers $h\ge2$, define
\[
T(n,h)=\prod_{\substack{q\le n\text{ prime}\\q\equiv1\pmod h}}q^{v_q(n!)}.
\]
An empty product is $1$.
All logarithms in this section are natural logarithms.

\begin{lemma}\label{lem:EO}
For every odd prime $\ell$ and $n\ge7$,
\begin{equation}\label{eq:EO-odd}
T(n,2\ell)\le
\left(4\ell^{\ell/(\ell-1)}\right)^{\dfrac{n}{2\ell-1}(\log n-\log7+1)}.
\end{equation}
\end{lemma}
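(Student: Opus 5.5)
The plan is to bound $T(n,2\ell)$ by controlling each prime $q\equiv1\pmod{2\ell}$, $q\le n$, and its exponent $v_q(n!)$. This is the Erd\H{o}s--Obl\'ath estimate (Va), so I would follow their method: compare $T(n,2\ell)$ with products of integers in the progression $1+2\ell j$.

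First I bound the exponent. By Legendre's formula,
\[
v_q(n!)\le \frac{n}{q-1}\le\frac{n}{2\ell j}
\]
for $q=1+2\ell j$. Taking logarithms, this gives
\[
\log T(n,2\ell)\le n\sum_{\substack{q\le n\\ q\equiv1\ (2\ell)}}\frac{\log q}{q-1}.
\]
That is too crude to produce the stated shape. Instead I would organize the primes by dyadic-type ranges $n/(t+1)<q\le n/t$. On such a range $v_q(n!)=t$ whenever $q^2>n$, and primes with $q\le\sqrt n$ need only a crude treatment. So the main task is to bound
\[
\prod_{\substack{n/(t+1)<q\le n/t\\ q\equiv1\ (2\ell)}}q .
\]

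The key Erd\H{o}s--Obl\'ath device is the following. The product of all primes $q\equiv1\pmod{2\ell}$ in an interval $(x,y]$ divides a ratio of products of terms of the progression $1+2\ell j$, which behaves like a generalized binomial coefficient
\[
\prod_{x<1+2\ell j\le y}(1+2\ell j)\Big/\prod(\text{small terms}).
\]
Its size is at most $(4\ell^{\ell/(\ell-1)})^{y/(2\ell-1)}$ roughly. This is the analogue of $\binom{2N}{N}\le4^N$, with the constant $\ell^{\ell/(\ell-1)}$ arising from $(2\ell)^{\ldots}$ divided by a factorial via Stirling-type bounds. I would prove a clean inequality of the form
\[
\prod_{\substack{y/2<q\le y\\ q\equiv1\ (2\ell)}}q\le C_\ell^{\,y/(2\ell-1)},\qquad C_\ell=4\ell^{\ell/(\ell-1)},
\]
by taking a multinomial-type quotient whose $q$-adic valuation is at least one for every such $q$, and then bounding the quotient elementarily.

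Summing the ranges then finishes the argument. Over $t$, the weights $v_q(n!)\approx n/q$ summed over dyadic blocks $q\in(n/2^{i+1},n/2^i]$ contribute about $\sum_i 2^i\cdot(n/2^i)/(2\ell-1)$ per block. The number of relevant blocks, weighted harmonically, is what produces the factor $\log n-\log7+1$. Equivalently, I would bound $\sum_{q}v_q(n!)\log q$ by a Stieltjes integral of the Chebyshev-type bound $\theta(y;2\ell,1)\le\frac{y}{2\ell-1}\log C_\ell$ against $d(n/y)$. This gives
\[
\frac{n\log C_\ell}{2\ell-1}\left(1+\int_7^n\frac{dy}{y}\right),
\]
where the lower cutoff $7$ reflects that $q\ge 2\ell+1\ge7$.

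I expect the main obstacle to be the Chebyshev-type bound for the progression, namely $\theta(y;2\ell,1)\le \frac{y}{2\ell-1}\log(4\ell^{\ell/(\ell-1)})$ uniformly for all $y$, with exactly this constant. Getting the correct quotient of progression products, and verifying its integrality at every prime $q\equiv1\pmod{2\ell}$ in the range, requires care. Since the paper cites this as formula (Va) of \cite{ErdosOblath}, I would ultimately invoke that reference for this step rather than rederive the constant.
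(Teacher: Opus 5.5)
The paper gives no argument for this lemma. It quotes the whole inequality, with the same $T$, as formula (Va) on p.~248 of Erd\H{o}s--Obl\'ath. If you rely on that reference, cite it for the final bound on $T(n,2\ell)$ itself, which is exactly the paper's route.

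The part you supply yourself has a gap. Your Stieltjes bound, after integration by parts, is $\int_7^n\frac ny\,d\theta(y;2\ell,1)=\sum_q\frac nq\log q$. That is not $\sum_q v_q(n!)\log q$. For every $q\le\sqrt n$ one has $v_q(n!)\ge\lfloor n/q\rfloor+1>n/q$; for instance, with $\ell=3$, $v_7(49!)=8>49/7$. Your closing expression $\frac{n\log C_\ell}{2\ell-1}(1+\log n-\log7)$, with $C_\ell=4\ell^{\ell/(\ell-1)}$ as in your notation, is exactly the logarithm of the right side of \eqref{eq:EO-odd}. So nothing is left to absorb the ``crude treatment'' of the primes $q\le\sqrt n$. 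As organized, the argument does not prove the lemma.

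The fix is to use prime powers throughout. Let $\psi_\ell(y)$ be the sum of $\log q$ over prime powers $q^e\le y$ with $q\equiv1\pmod{2\ell}$. Since $v_q(n!)=\sum_e\lfloor n/q^e\rfloor\le\sum_e n/q^e$, your integral with $\psi_\ell$ in place of $\theta$ does dominate $\log T(n,2\ell)$. The bound $\psi_\ell(y)\le\frac{y}{2\ell-1}\log C_\ell$ then gives \eqref{eq:EO-odd} exactly as you computed. The lower limit $7$ is legitimate because $\psi_\ell$ vanishes below $2\ell+1\ge7$.

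This Chebyshev bound has a short proof, so you need not defer it to the reference. For $y>1$ put $N=\lfloor(y-1)/(2\ell)\rfloor+1$ and $I_N=2^{v_2(N!)}\ell^{v_\ell(N!)}\prod_{j=0}^{N-1}(1+2\ell j)/N!$.
\begin{itemize}
\item $I_N$ is an integer: for a prime $r\nmid2\ell$, at least $\lfloor N/r^e\rfloor$ of these $N$ terms are divisible by $r^e$.
\item The terms are all the integers $\equiv1\pmod{2\ell}$ up to $y$. So each $q^e$ with $q\equiv1\pmod{2\ell}$ and $N<q^e\le y$ is itself a term, while $\lfloor N/q^e\rfloor=0$. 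Hence $\log I_N\ge\psi_\ell(y)-\psi_\ell(N)$.
\item From $1+2\ell j\le2\ell(j+1)$, $v_2(N!)\le N-1$ and $v_\ell(N!)\le(N-1)/(\ell-1)$, one gets $I_N\le C_\ell^{N-1}$.
\item Using $N-1\le\frac{y-1}{2\ell}$ and $N\le\frac{y+2\ell-1}{2\ell}$, induction on $\lfloor y\rfloor$ (trivial for $y<2\ell+1$) gives $\psi_\ell(y)\le\bigl(\frac{y-1}{2\ell}+\frac{y+2\ell-1}{2\ell(2\ell-1)}\bigr)\log C_\ell=\frac{y}{2\ell-1}\log C_\ell$.
\end{itemize}

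This also shows where the constant comes from, which your sketch leaves vague. $C_\ell$ is the step $2\ell$ of the progression times the factor $2\ell^{1/(\ell-1)}$ that restores integrality at $2$ and $\ell$. The exponent $\frac1{2\ell-1}$ comes from iterating $y\mapsto y/(2\ell)$, not from a single block such as $(y/2,y]$.
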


This is formula (Va) on page 248 of \cite{ErdosOblath}, with the same definition of $T$.
The estimate is applied to divisors of $n!/a^k$.

\begin{lemma}\label{lem:cyclotomic}
Let $\ell\ge3$ be an odd prime, $b\ge2$, and $\varepsilon\in\{-1,1\}$.
Put
\[
U=b^\ell-\varepsilon,\qquad
C=\dfrac{b^\ell-\varepsilon}{b-\varepsilon}.
\]
Then $C$ is an odd positive integer,
\begin{equation}\label{eq:C-lower}
C>\dfrac23 U^{(\ell-1)/\ell},
\end{equation}
and every prime divisor $q\ne\ell$ of $C$ satisfies $q\equiv1\pmod{2\ell}$.
Moreover, $v_\ell(C)\le1$.
Finally, $C>\ell$, except that $C=\ell=3$ when $(b,\ell,\varepsilon)=(2,3,-1)$.
\end{lemma}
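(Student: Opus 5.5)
The plan is to write $C=\sum_{j=0}^{\ell-1}\varepsilon^{\ell-1-j}b^{j}$, which is $\Phi_\ell(b)$ when $\varepsilon=1$ and $\Phi_\ell(-b)=\Phi_{2\ell}(b)$ up to the sign convention when $\varepsilon=-1$ (using $\ell$ odd, so $(-b)^\ell-(-1)=-(b^\ell+1)$ and $-b-(-1)=-(b+1)$). Thus in both cases $C=\Phi_\ell(\varepsilon b)$, and $\varepsilon b$ is an integer of absolute value at least $2$. Since the sum has $\ell$ terms, an odd number, and consecutive terms $b^j,b^{j+1}$ have opposite parity only if $b$ is even, parity follows directly. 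If $b$ is even, $C\equiv\varepsilon^{\ell-1}=1\pmod 2$. If $b$ is odd, $C$ is a sum of $\ell$ odd terms, hence odd. Positivity follows from $U>0$ and $b-\varepsilon>0$ for $b\ge2$.

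For the prime divisors, let $q\mid C$ with $q\ne\ell$, and put $x=\varepsilon b$. Then $q\mid x^\ell-1$. If $x\equiv1\pmod q$, then $C=\Phi_\ell(x)\equiv\ell\pmod q$, forcing $q=\ell$, which is excluded. So the order of $x$ modulo $q$ is exactly $\ell$, and $\ell\mid q-1$. Since $C$ is odd, $q$ is odd, so $q\equiv1\pmod{2\ell}$. For $v_\ell(C)\le1$, I will use the standard lifting-the-exponent argument. If $\ell\mid C$, then $x\equiv1\pmod\ell$ by Fermat, since $x^\ell\equiv x$. Writing $x=1+\ell t$ and expanding,
\[
\Phi_\ell(x)=\frac{(1+\ell t)^\ell-1}{\ell t}\equiv\ell\pmod{\ell^2}
\]
for $\ell\ge3$, because the binomial terms beyond the first contribute $\binom{\ell}{2}\ell t+\dots\equiv0\pmod{\ell^2}$.

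For the lower bound \eqref{eq:C-lower}, note that $C=U/(b-\varepsilon)$. It therefore suffices to show $(b-\varepsilon)^\ell<\tfrac32^{\,\ell}\,U\cdot$ (appropriately arranged); precisely, the target is $(b-\varepsilon)<\tfrac32U^{1/\ell}$. When $\varepsilon=1$, $b-1<U^{1/\ell}$ is immediate, since $U=b^\ell-1\ge(b-1)^\ell$. When $\varepsilon=-1$, we need $b+1<\tfrac32(b^\ell+1)^{1/\ell}$, which follows from $b+1\le\tfrac32 b$ for $b\ge2$ together with $(b^\ell+1)^{1/\ell}>b$. This step is routine; the only care needed is strictness at $b=2$, where $b+1=\tfrac32 b$ exactly but $(b^\ell+1)^{1/\ell}>b$ strictly.

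Finally, for $C>\ell$: when $\varepsilon=1$ we have $C\ge1+b+\dots+b^{\ell-1}\ge 2^\ell-1>\ell$. When $\varepsilon=-1$, $C=\sum_{j=0}^{\ell-1}(-1)^{\ell-1-j}b^j\ge b^{\ell-1}-b^{\ell-2}\ge b^{\ell-2}(b-1)$, obtained by pairing terms. This is at least $2^{\ell-2}$ and gives $C>\ell$ for $\ell\ge5$. For $\ell=3$, $C=b^2-b+1$ exceeds $3$ unless $b=2$, and $b=2$ gives the stated exception. I expect no real obstacle; the mildest subtlety is handling the sign $\varepsilon=-1$ uniformly via $x=\varepsilon b$ in the order argument.
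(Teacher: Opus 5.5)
Your proof is correct. For the lower bound $C>\tfrac23U^{(\ell-1)/\ell}$ you argue exactly as the paper does: write $C=U/(b-\varepsilon)$ and show $b-\varepsilon<\tfrac32U^{1/\ell}$, using $b+1\le\tfrac32b<\tfrac32U^{1/\ell}$ when $\varepsilon=-1$.

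The real difference is in the remaining assertions. The paper does not prove the prime-support claim, $v_\ell(C)\le1$, or the exceptional equality. It cites them as classical cyclotomic facts from Erd\H{o}s--Obl\'ath. You derive them directly from the identity $C=\Phi_\ell(\varepsilon b)$, using four standard arguments:
\begin{itemize}
\item an order argument, where $\Phi_\ell(1)=\ell$ rules out $\varepsilon b\equiv1\pmod q$ for $q\ne\ell$;
\item the expansion $\Phi_\ell(1+\ell t)\equiv\ell\pmod{\ell^2}$, valid for $\ell\ge3$;
\item the pairing bound $C\ge b^{\ell-2}(b-1)$ for $\varepsilon=-1$;
\item a direct parity check.
\end{itemize}
These steps are all valid. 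Your version makes the lemma self-contained at the cost of a few lines, while the paper's buys brevity by leaning on the literature.

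One cosmetic slip: $U\ge(b-1)^\ell$ only gives $b-1\le U^{1/\ell}$, not the strict inequality you state. This is harmless, since you only need $b-1<\tfrac32U^{1/\ell}$. In any case, $b^\ell-1>(b-1)^\ell$ holds strictly for $b\ge2$.
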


The prime-support, exceptional-prime valuation, and exceptional equality assertions are the classical cyclotomic facts used in \cite[Section~2, pp.~248--250]{ErdosOblath}, specialized to the coprime pair $(b,1)$.

\begin{proof}[Proof of the lower bound \eqref{eq:C-lower}]
If $\varepsilon=1$, then $b-1<U^{1/\ell}$, so $C>U^{(\ell-1)/\ell}$.
If $\varepsilon=-1$, then $b+1\le\dfrac32b<\dfrac32U^{1/\ell}$, giving \eqref{eq:C-lower}.
\end{proof}

\begin{lemma}\label{lem:comparison}
For every $n\ge8$ and every odd prime $\ell$ with $2\ell+1\le n$,
\begin{equation}\label{eq:comparison-odd}
\ell T(n,2\ell)<\dfrac23\big((n-1)!\big)^{(\ell-1)/\ell}.
\end{equation}
\end{lemma}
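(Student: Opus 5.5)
Take logarithms and compare both sides of \eqref{eq:comparison-odd} using Lemma~\ref{lem:EO} for the left side and a Stirling-type lower bound for the right side. By \eqref{eq:EO-odd},
\[
\log\big(\ell T(n,2\ell)\big)\le \log\ell+\frac{n}{2\ell-1}\Big(\log 4+\frac{\ell}{\ell-1}\log\ell\Big)(\log n-\log7+1).
\]
For the right side we use $\log((n-1)!)\ge (n-1)\log(n-1)-(n-1)+\tfrac12\log(n-1)$, or simply $(n-1)!\ge ((n-1)/e)^{n-1}$, together with $(\ell-1)/\ell\ge 2/3$. The target is therefore
\[
\log\ell+\frac{n\,c_\ell}{2\ell-1}(\log n-\log 7+1)<\log\tfrac23+\frac{\ell-1}{\ell}(n-1)\big(\log(n-1)-1\big),
\qquad c_\ell=\log4+\tfrac{\ell}{\ell-1}\log\ell .
\]

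\textbf{Reduction to $n$ alone.} The function $g(\ell)=c_\ell/(2\ell-1)$ is decreasing in $\ell\ge3$, with $g(3)=(\log4+\tfrac32\log3)/5\approx0.607$. Also $\ell\le (n-1)/2$, so $\log\ell\le\log n$. It therefore suffices to prove
\[
\log n+0.61\,n(\log n-0.946)<\log\tfrac23+\tfrac23(n-1)(\log(n-1)-1)
\]
for all $n\ge 7$, with attention restricted to $n\ge8$. The leading coefficients of $n\log n$ are $0.61$ against $2/3$, so the inequality holds for large $n$. The margin is slim, though: $2/3-0.61\approx0.057$. The lower-order terms ($-0.61\cdot(-0.946)n\approx+0.58n$ on the left against $-\tfrac23 n$ on the right) push the crossover to fairly large $n$, roughly $\log n\gtrsim 22$. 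So this crude uniform reduction is not enough by itself.

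\textbf{Refinement by splitting in $\ell$.} For $\ell\ge5$ we have $g(\ell)\le g(5)=(\log4+1.25\log5)/9\approx0.378$ and $(\ell-1)/\ell\ge 4/5$. The coefficient gap is then large, and the inequality should hold for all $n\ge11$ after a short monotonicity check. The derivative of the difference is positive once $n\ge 11$, so it suffices to verify the base case. For $\ell=3$ (so $n\ge7$), use the exact factor $(\ell-1)/\ell=2/3$ and compare $0.607\,n(\log n-0.946)+\log3$ with $\tfrac23\log((n-1)!)$. Use the sharper Stirling bound including the $\tfrac12\log(2\pi(n-1))$ term. Beyond that, the real obstacle is the range where the asymptotic bound is not yet effective.

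\textbf{Main obstacle: the intermediate range.} The hardest step is the intermediate range of $n$ for $\ell=3$, and possibly $\ell=5,7$, where the Erd\H{o}s--Obl\'ath bound is too weak. There we would not use Lemma~\ref{lem:EO}. Instead we would bound $T(n,2\ell)$ directly: $T(n,6)$ is supported on primes $q\equiv1\pmod 6$, whose contribution $\sum_{q\le n,\,q\equiv1(6)} v_q(n!)\log q$ is about $\tfrac12 n\log n$ at most. More precisely, it is at most $\sum_{q}\big(\tfrac{n}{q-1}\big)\log q$ over such $q$, which the Rosser--Schoenfeld estimates for progressions (or the trivial bound $q\ge 7$, so that $v_q(n!)\le n/6$) control. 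Combined with Chebyshev's bound $\vartheta(x)<1.01624x$, this gives roughly $\log T(n,6)\le \tfrac12\cdot1.02\, n+O(\log n)$-type estimates that beat $\tfrac23\log((n-1)!)$ for all $n\ge8$. Any leftover small $n$ would be checked by direct comparison of the two sides.
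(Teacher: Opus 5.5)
Your architecture matches the paper's: Lemma~\ref{lem:EO} for large $n$, reduction to the worst exponent, then a finite check. Splitting into $\ell=3$ and $\ell\ge5$ is a legitimate replacement for the paper's proof that $f(t)=\log t+\frac{\log4+\frac{t}{t-1}\log t}{2t-1}\,nL$ is decreasing, and your $\ell\ge5$ branch does work for every $n\ge11$. The gap comes from a sign slip. Since $\log7>1$, you have $0.61\,n(\log n-\log7+1)=0.61\,n\log n-0.577\,n$. So the linear term on the left is $-0.577n$, not $+0.58n$, and it helps you. There is no crossover near $\log n\approx22$; your crude uniform inequality already holds for all $n\ge53$. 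For $\ell=3$, keep $\log3$ and use the exact $\log((n-1)!)$, or Stirling including $\frac12\log(2\pi(n-1))$. Then the comparison $\log3+c\,n(\log n-\log7+1)<\log\tfrac23+\tfrac23\log((n-1)!)$, with $c=\tfrac15(\log4+\tfrac32\log3)<0.607$, holds for every $n\ge22$. The margin is about $0.1$ at $n=22$, and each step from $n$ to $n+1$ increases the difference by more than $(\tfrac23-c)\log n-\tfrac cn-c(2-\log7)>0.12$. This is exactly the paper's function $G(n)>0$.

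The argument you propose for the resulting (nonexistent) intermediate range is wrong and would not rescue the proof. The trivial bound $v_q(n!)\le n/6$ together with $\vartheta(n)<1.01624\,n$ only gives $\log T(n,6)\le\tfrac n6\vartheta(n)$, which is quadratic in $n$. The claimed bound $\log T(n,6)\le\tfrac12\cdot1.02\,n+O(\log n)$ is false: $\log T(n,6)$ is at least the sum of $\lfloor n/q\rfloor\log q$ over primes $q\le n$ with $q\equiv1\pmod 6$, which grows like $\tfrac12 n\log n$. A correct version would need an explicit Mertens-type bound in the progression $1\bmod 6$, which you do not supply. Direct comparison cannot cover $n$ up to $e^{22}$ either. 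What genuinely remains, once the sign is fixed, is $\ell=3$ with $8\le n\le21$, where Lemma~\ref{lem:EO} really is too weak: at $n=21$ the two sides of the logarithmic comparison are about $27.84$ and $27.82$. There you need exact values from Legendre's formula. These are $T(n,6)=7$ for $n\le12$ and $T(n,6)\le T(21,6)=84\,721$ for $n\le21$. Then $21<\tfrac23\,2^8<\tfrac23(7!)^{2/3}$ and $(9\cdot84\,721)^3<2^{59}<8(12!)^2$ settle the range, as in the paper. With the sign corrected and this finite check, your outline becomes a complete proof.
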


\begin{proof}
First suppose $n\ge22$.
Put $L=\log n-\log7+1$ and
\[
f(t)=\log t+
\dfrac{\log4+\dfrac{t}{t-1}\log t}{2t-1}\,nL
\qquad\left(3\le t\le\dfrac{n-1}{2}\right).
\]
To see that $f$ is strictly decreasing, let $A(t)=\log4+\dfrac{t}{t-1}\log t$.
Then
\[
A'(t)\le\dfrac1{t-1},\qquad
\left(\dfrac{A(t)}{2t-1}\right)'
<-\dfrac{2\log t}{(2t-1)^2}.
\]
The latter inequality follows from
$\dfrac{2t-1}{t-1}\le\dfrac52<2\log4$ and $A(t)\ge\log4+\log t$.
Since $n\ge2t+1$, $L\ge1$, and $\log t>1$, we obtain
\[
f'(t)<\dfrac1t-\dfrac{2nL\log t}{(2t-1)^2}<0.
\]
Thus \eqref{eq:EO-odd} gives
\[
\log\big(\ell T(n,2\ell)\big)
\le\log3+c\,n(\log n-\log7+1),
\qquad c=\dfrac{\log4+\dfrac32\log3}{5}<0.607.
\]
Because $(\ell-1)/\ell\ge2/3$, \eqref{eq:comparison-odd} follows if
\[
G(n)=\dfrac23\log((n-1)!)-\log(9/2)
-c\,n(\log n-\log7+1)>0.
\]
The elementary estimates
\[
\begin{gathered}
\log(21!)>45.38,\qquad\log(9/2)<1.505,\\
3.09<\log22<3.092,\qquad\log7>1.945
\end{gathered}
\]
give
\[
G(22)>\dfrac23(45.38)-1.505
-0.607\cdot22(3.092-1.945+1)>0.077.
\]
For $n\ge22$, using $\log(1+1/n)<1/n$,
\begin{align*}
G(n+1)-G(n)
&>\left(\dfrac23-c\right)\log n-\dfrac cn-c(2-\log7)\\
&>0.059\cdot3.09-\dfrac{0.607}{22}-0.607\cdot0.055>0.121.
\end{align*}
Hence $G(n)>0$ for all $n\ge22$.

For $8\le n\le21$, only $\ell=3,5,7$ are admissible.
The values in Table~\ref{tab:cyclotomic} follow directly from Legendre's formula.
For $\ell=3$ and $8\le n\le12$, one has $T(n,6)=7$ and $7!>2^{12}$, so
\[
3T(n,6)=21<\dfrac23\,2^8
<\dfrac23\big((n-1)!\big)^{2/3}.
\]
For $13\le n\le21$, use $T(n,6)\le84\,721$, $9\cdot84\,721<3\cdot2^{18}$, and $12!>2^{28}$.
These give
\[
\bigl(9T(n,6)\bigr)^3
<27\cdot2^{54}<2^{59}<8(12!)^2
\le8\big((n-1)!\big)^2,
\]
which is the cube of the required comparison.
For $\ell=5$ or $7$, admissibility gives $n\ge11$ and the table gives $\ell T(n,2\ell)\le55$.
Since $10!>2^{20}$ and $(\ell-1)/\ell\ge4/5$, we have
\[
\dfrac23\big((n-1)!\big)^{(\ell-1)/\ell}
>\dfrac23\,2^{16}>55.
\]
This proves \eqref{eq:comparison-odd} throughout the finite range.

All decimal inequalities above are strict rational bounds.
Taking $J=24$, after scaling each argument by a power of $2$ into $[1,2]$, certifies all the numerical inequalities displayed above through
\[
\log x=2\sum_{j=0}^{J-1}\dfrac{y^{2j+1}}{2j+1}+R_J,\qquad
y=\dfrac{x-1}{x+1},\qquad
0\le R_J\le\dfrac{2y^{2J+1}}{(2J+1)(1-y^2)}.
\]
Direct substitution in this finite rational enclosure gives each displayed endpoint; no external numerical certificate is used.
\end{proof}

\begin{table}[ht]
\centering
\caption{Progression parts in the finite range of Lemma~\ref{lem:comparison}. A dash indicates an inadmissible exponent.}
\label{tab:cyclotomic}
\begin{tabular}{rrrr}
\toprule
$n$ & $T(n,6)$ & $T(n,10)$ & $T(n,14)$\\
\midrule
8--10 & 7 & --- & ---\\
11--12 & 7 & 11 & ---\\
13 & 91 & 11 & ---\\
14 & 637 & 11 & ---\\
15--18 & 637 & 11 & 1\\
19--20 & 12\,103 & 11 & 1\\
21 & 84\,721 & 11 & 1\\
\bottomrule
\end{tabular}
\end{table}

\FloatBarrier
\begin{theorem}\label{thm:odd-shift}
For $n\ge8$ and an odd prime $\ell$, there are no positive integers $a,b$ satisfying
\[
n!=a^\ell(b^\ell-\varepsilon),\qquad a^\ell\le n,\qquad
\varepsilon\in\{-1,1\}.
\]
Consequently, if $m\ge2$ and $|n!-m^\ell|\le n$, then
\[
|n!-m^\ell|=p\qquad\text{for a prime }n/2<p\le n.
\]
\end{theorem}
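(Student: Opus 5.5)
The plan is to show that the cyclotomic factor $C$ of $b^\ell-\varepsilon$ is too large to fit inside the part of $n!$ that it is allowed to occupy. Suppose $n!=a^\ell(b^\ell-\varepsilon)$ with $a^\ell\le n$. Then $b\ge2$, since $b=1$ would force $n!\le 2n$. Put $U=b^\ell-\varepsilon=n!/a^\ell$, so that $U\ge (n-1)!$, and let $C=U/(b-\varepsilon)$ as in Lemma~\ref{lem:cyclotomic}.

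\emph{Locating $C$.} Since $C\mid U\mid n!$, every prime divisor of $C$ is at most $n$. By Lemma~\ref{lem:cyclotomic}, each prime divisor $q\ne\ell$ of $C$ satisfies $q\equiv1\pmod{2\ell}$, and $v_q(C)\le v_q(n!)$. Moreover $v_\ell(C)\le1$. It follows that
\[
C\le \ell\, T(n,2\ell).
\]

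\emph{Bounding $C$ below.} By \eqref{eq:C-lower} and $U\ge(n-1)!$,
\[
C>\tfrac23\big((n-1)!\big)^{(\ell-1)/\ell}.
\]
There are two cases.
\begin{enumerate}
\item If $2\ell+1\le n$, Lemma~\ref{lem:comparison} gives $\ell T(n,2\ell)<\tfrac23((n-1)!)^{(\ell-1)/\ell}$, which contradicts the upper bound.
\item If $2\ell+1>n$, then no prime $q\le n$ satisfies $q\equiv1\pmod{2\ell}$, because $q\ge2\ell+1>n$. Hence $T(n,2\ell)=1$ and $C\mid\ell$, so $C\le\ell$. Lemma~\ref{lem:cyclotomic} says $C>\ell$ except in the single case $(b,\ell,\varepsilon)=(2,3,-1)$, where $U=9$. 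But $U\ge(n-1)!\ge 7!$, so this exception cannot occur, and we again have a contradiction.
\end{enumerate}
Note that $n\ge8$ is needed to invoke Lemma~\ref{lem:comparison}. The whole difficulty sits in that lemma, which is already proved, so what remains here is bookkeeping.

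\emph{Consequence for perfect powers.} Suppose $m\ge2$ and $|n!-m^\ell|\le n$. Equality $n!=m^\ell$ is impossible by Lemma~\ref{lem:not-power}, so $1\le d\le n$. Lemma~\ref{lem:dichotomy} then says $d=a^\ell$ with $m=ab$, $\gcd(a,b)=1$, or $d$ is a prime $p$ with $n/2<p\le n$. In the first alternative, writing $m^\ell=n!+\varepsilon a^\ell$ gives $n!=a^\ell(b^\ell-\varepsilon)$ with $a^\ell\le n$, which the first part excludes. Only the prime alternative survives.

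The only step needing care is the prime-support bound $C\le\ell T(n,2\ell)$. It relies on $C$ dividing $n!/a^\ell$, which holds because $C$ divides $U$, and on the exponent of $\ell$ in $C$ being at most one. I would also check that $\ell\mid C$ does not inflate the bound beyond the single factor $\ell$ already accounted for.
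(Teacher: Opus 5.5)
Your proof is correct and follows the paper's argument essentially step for step: the bound $C\le\ell T(n,2\ell)$ from the prime support in Lemma~\ref{lem:cyclotomic}, the case $n<2\ell+1$ where $T(n,2\ell)=1$, Lemma~\ref{lem:comparison} otherwise, and then Lemmas~\ref{lem:not-power} and~\ref{lem:dichotomy} for the consequence. The only cosmetic difference is how the exceptional triple $(2,3,-1)$ is dismissed: you use $U=9<(n-1)!$, whereas the paper notes that $\ell=3$ with $n<2\ell+1$ would force $n<7$.
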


\begin{proof}
Suppose such $a,b$ exist, and put $U=n!/a^\ell=b^\ell-\varepsilon$.
Since $U\ge(n-1)!$, we have $b\ge2$.
For $C$ as in Lemma~\ref{lem:cyclotomic}, its prime support and $C\mid n!$ imply
\[
C\le\ell T(n,2\ell).
\]
If $n<2\ell+1$, then $T(n,2\ell)=1$, whereas Lemma~\ref{lem:cyclotomic} gives $C>\ell$, with the sole exception $b=2$, $\ell=3$, $\varepsilon=-1$.
The exceptional value $\ell=3$ would require $n<7$, contrary to $n\ge8$.
Thus $n\ge2\ell+1$.

Now Lemmas~\ref{lem:cyclotomic} and \ref{lem:comparison} give
\[
C>\dfrac23U^{(\ell-1)/\ell}
\ge\dfrac23\big((n-1)!\big)^{(\ell-1)/\ell}
>\ell T(n,2\ell),
\]
a contradiction.
The final assertion follows from Lemmas~\ref{lem:not-power} and \ref{lem:dichotomy}.
\end{proof}

\section{Cubic separation}\label{sec:cubic}

The remaining prime displacement for exponent three can be excluded by cubic reciprocity and the geometry of the Eisenstein lattice.
Both signs of the displacement are treated at once.

Put $E=\mathbb Z[\omega]$, where $\omega^2+\omega+1=0$, and put $\lambda=1-\omega$.
Under the complex embedding,
\[
N(a+b\omega)=|a+b\omega|^2=a^2-ab+b^2,
\qquad \operatorname{covol}(E)=\dfrac{\sqrt3}{2}.
\]
The ring $E$ is a unique factorization domain, with units $\{\pm1,\pm\omega,\pm\omega^2\}$.
The prime above $3$ is $\lambda$; rational primes congruent to $2$ modulo $3$ remain prime, and those congruent to $1$ modulo $3$ split into conjugate primes.

For an Eisenstein prime $\pi$ not dividing $3$, the cubic residue symbol $(\alpha/\pi)_3$ is the unique element of $\mu_3=\{1,\omega,\omega^2\}$ satisfying
\[
\left(\dfrac{\alpha}{\pi}\right)_3
\equiv\alpha^{(N(\pi)-1)/3}\pmod\pi,
\qquad \pi\nmid\alpha.
\]
Extend the symbol multiplicatively in the denominator through its prime ideals.
An element coprime to $3$ is called primary when it is $1$ modulo $3E$; it has a unique primary associate.
Cubic reciprocity \cite[Chapter~9, Section~3, Theorem~1]{IrelandRosen} gives
\begin{equation}\label{eq:cubic-reciprocity}
\left(\dfrac{\pi}{\theta}\right)_3
=\left(\dfrac{\theta}{\pi}\right)_3
\end{equation}
for primary primes $\pi,\theta$ of distinct norms.
The normalization of a primary associate as $1$ modulo $3E$, rather than $-1$, does not change the symbols because $-1$ is a cube.

The next statement is the sharp planar Hermite bound $\gamma_2=2/\sqrt3$; see \cite[equation~(17) and Theorem~3.4(i)]{Neumaier}.

\begin{lemma}\label{lem:planar-lattice}
Every planar Euclidean lattice of covolume $D$ contains a nonzero vector of squared length at most $2D/\sqrt3$.
\end{lemma}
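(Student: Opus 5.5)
The plan is to prove the bound by Lagrange--Gauss reduction, using only discreteness of the lattice and elementary plane geometry. Let $\Lambda\subset\mathbb R^2$ have covolume $D$. Because $\Lambda$ is discrete, every bounded region contains only finitely many lattice points. Hence a nonzero vector $v\in\Lambda$ of minimal length exists. We will show that $|v|^2\le 2D/\sqrt3$.

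\emph{Step 1: complete $v$ to a basis.} The vector $v$ is primitive. If $v=jv'$ with $v'\in\Lambda$ and $j\ge2$, then $v'$ would be a shorter nonzero vector. Fix a basis and write $v=(a,b)$ in coordinates with respect to it, so that $\gcd(a,b)=1$. Bezout gives integers $c,d$ with $ad-bc=1$. The coordinate vector $(c,d)$ therefore gives $w_0\in\Lambda$ such that $\{v,w_0\}$ is a basis. For every integer $t$, the pair $\{v,w_0+tv\}$ is also a basis. This is the step that needs the most care, although it is standard.

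\emph{Step 2: size reduction.} The function $t\mapsto|w_0+tv|^2$ is a quadratic polynomial with leading coefficient $|v|^2>0$. Choose $t$ to be an integer nearest to $-\langle w_0,v\rangle/|v|^2$, and put $w=w_0+tv$. Then
\[
|\langle v,w\rangle|\le\tfrac12|v|^2 .
\]
Since $w\ne0$, the minimality of $v$ also gives $|w|\ge|v|$.

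\emph{Step 3: the angle bound.} Let $\theta\in(0,\pi)$ be the angle between $v$ and $w$. Step 2 gives
\[
|\cos\theta|=\frac{|\langle v,w\rangle|}{|v||w|}\le\frac{|v|}{2|w|}\le\frac12 .
\]
Consequently $\sin\theta\ge\sqrt3/2$. The covolume is the area of the fundamental parallelogram, so
\[
D=|v||w|\sin\theta\ge|v|^2\cdot\frac{\sqrt3}{2}.
\]
This rearranges to $|v|^2\le 2D/\sqrt3$, which is the assertion.

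Equality holds exactly for the hexagonal lattice; for example, $E=\mathbb Z[\omega]$ has minimum $1$ and covolume $\sqrt3/2$. This confirms that the constant is sharp, although sharpness is not needed in what follows.
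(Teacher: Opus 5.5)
Your proof is correct and complete. Minimality of $v$ gives primitivity, and B\'ezout completes $v$ to a basis $\{v,w_0\}$. Size reduction then gives $|\langle v,w\rangle|\le\frac12|v|^2$ with $|w|\ge|v|$, and the area identity $D=|v||w|\sin\theta$ for a basis yields $|v|^2\le 2D/\sqrt3$.

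The paper gives no proof of this lemma. It quotes the value $\gamma_2=2/\sqrt3$ of Hermite's constant from Neumaier (equation (17) and Theorem 3.4(i)). It then uses only the upper bound, in Lemma~\ref{lem:cubic-nonresidue}, for the index-$q$ sublattice $L\subset E$ of covolume $q\sqrt3/2$.

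Your Lagrange--Gauss reduction is the classical argument for $\gamma_2\le 2/\sqrt3$ that the citation relies on, so the two routes differ only in where the work is done. The citation is shorter. Your version makes the cubic section self-contained at the cost of a few lines, which fits the paper's stated aim of justifying elementary inputs in the text.

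You are right to stress the basis property of $\{v,w\}$. If $v,w$ spanned only a sublattice of index $j$, you would get $|\det(v,w)|=jD$, and the final bound would be weaker by the factor $j$. Completing $v$ to a basis via B\'ezout, rather than picking a shortest vector off the line $\mathbb R v$, is what keeps this step clean.
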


\begin{lemma}\label{lem:cubic-nonresidue}
Let $q>3$ be a rational prime satisfying $q\equiv1$ or $8\pmod9$.
There is a rational prime $r<q$, with $r\equiv1\pmod3$, such that $q$ is not a cube modulo $r$.
\end{lemma}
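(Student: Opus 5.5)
The plan is a reciprocity-plus-lattice argument in $E=\mathbb Z[\omega]$. We look for a degree-one Eisenstein prime $\pi$ of norm $r<q$ with $(q/\pi)_3\ne1$. Since $E/\pi\cong\mathbb F_r$, this says exactly that $q$ is not a cube modulo $r$, and $r\equiv1\pmod3$ holds automatically for a split prime. By cubic reciprocity \eqref{eq:cubic-reciprocity}, applied to a primary associate of $\pi$ and to the primary prime(s) above $q$ (note $-1$ is a cube, so $\pm q$ may be used), the condition becomes a statement about a cubic character $\chi$ modulo $q$ evaluated at $\pi$. So it suffices to find an element $\alpha\in E$ of norm less than $q$, coprime to $q$, with $\chi(\alpha)\ne1$, where $\chi$ is trivial on rational integers, on the units and on $\lambda$. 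Then some prime factor $\pi$ of $\alpha$ has $\chi(\pi)\ne1$. Such a $\pi$ cannot be $\lambda$, a unit multiple, or an inert rational prime, since $\chi$ is trivial on rational integers. Hence $\pi$ has prime norm $r\equiv1\pmod3$ with $r\le N(\alpha)<q$.

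\emph{The case $q\equiv8\pmod9$.} Here $q$ is inert and $E/qE\cong\mathbb F_{q^2}$. Take $\chi(\alpha)=(\alpha/q)_3\equiv\alpha^{(q^2-1)/3}\pmod q$. For $c\in\mathbb Z$ coprime to $q$, $\chi(c)=(c^{q-1})^{(q+1)/3}=1$. Since $9\mid q^2-1$, also $\chi(\omega)=1$. The supplementary law for $\lambda$ gives $\chi(\lambda)=1$ precisely because $q\equiv-1\pmod9$, and I expect this is where the mod-$9$ hypothesis enters. Choose $\beta$ with $\chi(\beta)\ne1$ and consider the lattice
\[
\Lambda=\{c\beta+q\gamma:\ c\in\mathbb Z,\ \gamma\in E\}.
\]
It has index $q$ in $E$, hence covolume $\tfrac{\sqrt3}{2}q$. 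Lemma~\ref{lem:planar-lattice} gives a nonzero $\alpha\in\Lambda$ with $N(\alpha)\le q$. If $q\mid c$, then $\alpha\in qE$ and $N(\alpha)\ge q^2$, which is impossible. So $\alpha\equiv c\beta$ with $q\nmid c$, and therefore $\chi(\alpha)=\chi(\beta)\ne1$. Moreover $N(\alpha)\ne q$, because an inert prime is not a norm, so $N(\alpha)<q$. The reduction above then finishes this case.

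\emph{The case $q\equiv1\pmod9$.} Here $q=\rho\bar\rho$ splits, and this is the step I expect to be the main obstacle. The natural target character is $\psi(\alpha)=(\alpha/\rho)_3(\alpha/\bar\rho)_3$, since by reciprocity $(q/\pi)_3=\psi(\pi)$ for primary $\pi$. But on rational integers $\psi(c)=c^{2(q-1)/3}$ is nontrivial, so the lattice trick above fails verbatim. The character $(\alpha/\rho)_3(\alpha/\bar\rho)_3^{-1}$ is trivial on $\mathbb Z$, but it controls the quotient $(\pi/\rho)_3/(\pi/\bar\rho)_3$ rather than the product.

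My first attempt would be to restrict the scalar $c$ to a subgroup on which $\psi$ is trivial, for instance by working modulo $\rho$ only. Concretely, I would take the lattice $\{\alpha:\alpha\equiv c\beta\pmod\rho\}$ for suitable $\beta$, or choose $\alpha\equiv0\pmod{\bar\rho}$-type constraints, so that the covolume stays $\asymp q$. I would then use the freedom to replace $\pi$ by $\bar\pi$, which conjugates the symbols, to force $\psi(\pi)\ne1$. The residue conditions $\chi(\omega)=\chi(\lambda)=1$ would again be supplied by $q\equiv1\pmod9$. If the index cannot be kept at $q$ with $\psi$ constant on the coset, the fallback is an explicit case analysis of the three possible values $(\pi/\rho)_3,(\pi/\bar\rho)_3$ for the short vector and its conjugate.
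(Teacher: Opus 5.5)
Your inert case ($q\equiv8\pmod9$) is correct and is essentially the paper's argument. The split case ($q\equiv1\pmod9$) is not proved, however, and the obstruction you identify there does not exist: it comes from a miscalculation.

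\paragraph{The error in the split case.} For $c\in\mathbb Z$ prime to $q$, complex conjugation gives
\[
\left(\frac{c}{\bar\rho}\right)_3=\overline{\left(\frac{c}{\rho}\right)_3}=\left(\frac{c}{\rho}\right)_3^{-1}.
\]
Equivalently, the two identifications $E/\rho\cong\mathbb F_q\cong E/\bar\rho$ send $\omega$ to mutually inverse cube roots of unity. So the common residue $c^{(q-1)/3}\bmod q$ corresponds to inverse elements of $\mu_3$ in the two symbols. Hence $\psi(c)=1$ on rational integers, not $c^{2(q-1)/3}$. It is the quotient character $(\cdot/\rho)_3(\cdot/\bar\rho)_3^{-1}$ that equals $(c/\rho)_3^2$ on $\mathbb Z$, which is the reverse of what you wrote.

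The workarounds you sketch would not help in any case. The set $\{\alpha:\alpha\equiv c\beta\pmod\rho\}$ is all of $E$, because $E/\rho$ is one-dimensional over $\mathbb F_q$. Replacing $\pi$ by $\bar\pi$ only conjugates $\psi(\pi)$, so it gives no new freedom.

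\paragraph{The fix.} With the correct fact, your inert-case lattice argument carries over. This is how the paper treats both cases uniformly with $\chi_q=(\cdot/q)_3$, which in the split case is exactly your $\psi$.
\begin{enumerate}
\item Choose $\beta\in(E/qE)^\times\cong\mathbb F_q^\times\times\mathbb F_q^\times$ with $\psi(\beta)\ne1$. This exists because each factor of $\psi$ is a nontrivial cubic character of one coordinate.
\item Take the index-$q$ lattice $\mathbb Z\beta+qE$. The planar Hermite bound gives $\alpha\equiv c\beta$ with $q\nmid c$, $N(\alpha)\le q$, and $\psi(\alpha)=\psi(\beta)\ne1$.
\item One step must change. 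You can no longer exclude $N(\alpha)=q$ by saying $q$ is not a norm, since $q=N(\rho)$. Instead, $\alpha$ is a unit modulo $qE$, so neither $\rho$ nor $\bar\rho$ divides $\alpha$ or $\bar\alpha$, and therefore $q\nmid N(\alpha)$.
\item Triviality on units uses $\psi(\omega)=\omega^{2(q-1)/3}=1$; this is where $q\equiv1\pmod9$ enters in the split case. Triviality on $\lambda$ then follows from $\lambda^2=-3\omega$.
\end{enumerate}
The rest of your reduction (factor $\alpha$, isolate a split prime $\pi$ of norm $r<q$, apply reciprocity with $q=\rho\bar\rho$) then goes through unchanged.
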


\begin{proof}
For a unit residue modulo $qE$, define the cubic Jacobi character
\begin{equation}\label{eq:cubic-character}
\chi_q(\alpha)=\left(\dfrac{\alpha}{q}\right)_3
=\prod_{\mathfrak p\mid qE}
\left(\dfrac{\alpha}{\mathfrak p}\right)_3.
\end{equation}
Here the product has two factors if $q\equiv1\pmod3$, and one factor if $q\equiv2\pmod3$.
The character is nontrivial on $(E/qE)^\times$.
Indeed, in the inert case this group is $\mathbb F_{q^2}^\times$, and the character is the power map with exponent $(q^2-1)/3$.
In the split case the Chinese remainder theorem gives $E/qE\cong\mathbb F_q\times\mathbb F_q$, and either of the two nontrivial local characters can be varied independently.

For every rational integer $t$ not divisible by $q$,
\begin{equation}\label{eq:cubic-trivial-factors}
\chi_q(t)=1,\qquad \chi_q(\omega)=1,\qquad \chi_q(\lambda)=1.
\end{equation}
To prove the first equality, conjugation fixes $t$ and permutes the prime ideals over $q$, while conjugating the symbol values.
Thus $\chi_q(t)$ is a real cube root of unity, hence is $1$.
For the second equality, the local exponent is $(q-1)/3$ if $q\equiv1\pmod9$, and $(q^2-1)/3$ if $q\equiv8\pmod9$.
In either case it is divisible by $3$.
Finally, $\lambda^2=-3\omega$ gives $\chi_q(\lambda)^2=1$, so $\chi_q(\lambda)=1$.
In particular, $\chi_q$ is trivial on $E^\times$.
The distinction between $E^\times$ and $(E/qE)^\times$ is essential here.

Suppose, contrary to the conclusion, that $q$ is a cube modulo every rational prime $r<q$ with $r\equiv1\pmod3$.
Let $\pi$ be a primary Eisenstein prime of norm such an $r$.
Set $q^*=q$ if $q\equiv1\pmod3$, and $q^*=-q$ otherwise.
Then $q^*$ is primary and its prime factorization consists of primary primes, with no remaining unit factor: their product is primary, and the only primary Eisenstein unit is $1$.
Applying \eqref{eq:cubic-reciprocity} to these factors gives
\[
\chi_q(\pi)
=\left(\dfrac{\pi}{q^*}\right)_3
=\left(\dfrac{q^*}{\pi}\right)_3
=\left(\dfrac{q}{\pi}\right)_3
=1.
\]
The third equality uses that $-1$ is a cube, and the last uses $E/(\pi)\cong\mathbb F_r$.
All other Eisenstein prime factors of norm less than $q$ are rational primes or associates of $\lambda$, and have character value $1$ by \eqref{eq:cubic-trivial-factors}.
Unique factorization therefore implies
\begin{equation}\label{eq:small-cubic-trivial}
\chi_q(\alpha)=1\qquad\text{whenever }0<N(\alpha)<q.
\end{equation}
Such elements are units modulo $qE$, since every prime ideal over $q$ has norm at least $q$.

Choose $\gamma\in(E/qE)^\times$ with $\chi_q(\gamma)\ne1$.
Every nonzero rational scalar multiple of $\gamma$ has the same character value, by \eqref{eq:cubic-trivial-factors}.
The inverse image $L$ of the one-dimensional $\mathbb F_q$-subspace $\mathbb F_q\gamma$ is a lattice of index $q$ in $E$.
Its covolume is $q\sqrt3/2$.
Lemma~\ref{lem:planar-lattice} supplies $0\ne\alpha\in L$ with $N(\alpha)\le q$.
The reduction of $\alpha$ is nonzero, because every nonzero element of $qE$ has norm at least $q^2$.
Hence $\alpha\bmod qE=t\gamma$ for some $t\in\mathbb F_q^\times$.
This residue is a unit; its conjugate is also a unit, so the rational norm $N(\alpha)=\alpha\overline\alpha$ is nonzero modulo $q$.
Thus $q\nmid N(\alpha)$, excluding $N(\alpha)=q$.
We obtain
\[
0<N(\alpha)<q,\qquad
\chi_q(\alpha)=\chi_q(t)\chi_q(\gamma)=\chi_q(\gamma)\ne1,
\]
contradicting \eqref{eq:small-cubic-trivial}.
\end{proof}

\begin{samepage}
\begin{theorem}\label{thm:cubic-separation}
For every integer $n\ge6$ and every integer $M\ge2$,
\begin{equation}\label{eq:cubic-separation}
|n!-M^3|>n.
\end{equation}
Consequently, for $n\ge6$, $m\ge2$, and every positive integer $k$ divisible by $3$, we have $|n!-m^k|>n$.
\end{theorem}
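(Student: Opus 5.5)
For $8\le n$ I would start from Theorem~\ref{thm:odd-shift} with $\ell=3$. It shows that any cube $M^3$ with $|n!-M^3|\le n$ must satisfy
\[
M^3=n!+\varepsilon p,\qquad \varepsilon\in\{-1,1\},\qquad p \text{ prime},\quad n/2<p\le n .
\]
The power-displacement alternative $d=a^3$ is already excluded there. So the remaining task is to rule out this prime displacement.

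The plan is to get a contradiction from cubic residues modulo small primes $r\le n$, using Lemma~\ref{lem:cubic-nonresidue}.

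\begin{enumerate}
\item \emph{Congruence class of $p$ modulo $9$.} Since $n\ge6$, we have $9\mid n!$, so $M^3\equiv\varepsilon p\pmod 9$. The cubes modulo $9$ are $0,\pm1$, and $3\nmid p$ because $p>n/2\ge4$. Hence $\varepsilon p\equiv\pm1\pmod9$, and therefore $p\equiv1$ or $8\pmod 9$. Also $p>3$.
\item \emph{A prime modulo which $p$ is not a cube.} Lemma~\ref{lem:cubic-nonresidue} supplies a prime $r<p$ with $r\equiv1\pmod3$ such that $p$ is not a cube modulo $r$.
\item \emph{Contradiction.} Because $r<p\le n$, we have $r\mid n!$. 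Reducing the equation gives $M^3\equiv\varepsilon p\pmod r$, with $\varepsilon p\not\equiv0\pmod r$. Since $-1=(-1)^3$ is a cube, $\varepsilon p$ is a cube modulo $r$ exactly when $p$ is. This contradicts the choice of $r$.
\end{enumerate}

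The analytic content sits inside Lemma~\ref{lem:cubic-nonresidue}. Given that lemma, the only point needing care is the mod-$9$ reduction, which is exactly what makes the lemma's hypothesis hold.

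The cases $n=6,7$ fall outside Theorem~\ref{thm:odd-shift}, and I would check them directly.
\begin{itemize}
\item For $n=6$: $6!=720$ lies between $8^3=512$ and $9^3=729$, with $729-720=9>6$.
\item For $n=7$: $7!=5040$ lies between $17^3=4913$ and $18^3=5832$, with $5040-4913=127>7$.
\end{itemize}

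For the consequence, write $k=3j$ and $M=m^j\ge2$, so that $m^k=M^3$ and \eqref{eq:cubic-separation} applies. I expect no step here to be a real obstacle, since the heavy lifting (the reciprocity and lattice argument) is already done in Lemma~\ref{lem:cubic-nonresidue}.
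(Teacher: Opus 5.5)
Your proposal is correct and matches the paper's proof essentially step for step. Theorem~\ref{thm:odd-shift} reduces the problem to a prime displacement $n/2<p\le n$, the reduction modulo $9$ forces $p\equiv1$ or $8\pmod 9$, Lemma~\ref{lem:cubic-nonresidue} supplies a prime $r<p\le n$ modulo which $\varepsilon p$ is not a cube, the indices $n=6,7$ are settled against the neighboring cubes $8^3,9^3$ and $17^3,18^3$, and the extension to $3\mid k$ goes through $m^k=(m^{k/3})^3$, exactly as in the paper.
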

\end{samepage}

\begin{proof}
First suppose $n\ge8$ and $|n!-M^3|\le n$.
The displacement is nonzero by Lemma~\ref{lem:not-power}.
Lemma~\ref{lem:dichotomy} and Theorem~\ref{thm:odd-shift}, with $\ell=3$, exclude a cubic displacement for both signs.
Thus
\[
M^3=n!+\varepsilon q,\qquad
\varepsilon\in\{-1,1\},\qquad \dfrac n2<q\le n,
\]
where $q\ge5$ is prime.
Since $9\mid n!$ and a unit cube modulo $9$ is $1$ or $-1$, we have $q\equiv1$ or $8\pmod9$.
For every rational prime $r<q$, reduction modulo $r$ gives $q\equiv(\varepsilon M)^3\pmod r$.
This contradicts Lemma~\ref{lem:cubic-nonresidue}.

At $n=6$, the neighboring cubes are $8^3<6!<9^3$, at distances $208$ and $9$.
At $n=7$, they are $17^3<7!<18^3$, at distances $127$ and $792$.
Monotonicity of cubes proves \eqref{eq:cubic-separation} at both indices.
Finally, write $m^k=(m^{k/3})^3$ when $3\mid k$.
\end{proof}

The starting index in Theorem~\ref{thm:cubic-separation} is sharp, since $5!+5=5^3$.

\section{Fourth powers and even exponents}\label{sec:quartic}

The product obstruction below reduces separation from fourth powers to an inequality involving only the factorial index.
A character estimate proves that inequality uniformly for $n\ge121$, and six elementary interval bounds cover $8\le n\le120$.
The argument requires no separate computational certificate.
Put $T_1(n)=T(n,4)$.
All results in this section are unconditional.

\begin{proposition}\label{prop:quartic-product}
Let $n\ge8$ and $m\ge2$ be integers.
If $|n!-m^4|\le n$, then
\begin{equation}\label{eq:quartic-necessary}
n!<4nT_1(n)^2.
\end{equation}
Consequently, the exact inequality $4nT_1(n)^2\le n!$ proves separation from every fourth power at that index.
\end{proposition}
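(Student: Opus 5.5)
The plan is to reduce to the square case and then use the structure of $v^2+1$. Put $M=m^2$, so that $|n!-M^2|\le n$ with $M\ge 4$. Theorem~\ref{thm:classification} applies to $M$ and gives integers $a,b\ge1$ with
\[
M=ab,\qquad n!=a^2(b^2-1),\qquad a^2\le n,\qquad \gcd(a,b)=1,\qquad b\ \text{odd}.
\]
In particular, Theorem~\ref{thm:lower} has already excluded fourth powers below $n!$.

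The first step shows that $a$ and $b$ are themselves squares. Since $ab=m^2$ is a perfect square and $\gcd(a,b)=1$, every prime exponent of $a$ and of $b$ is even. Hence $a=u^2$ and $b=v^2$ with $u,v\ge1$ and $v$ odd. The equation becomes
\[
n!=u^4(v^4-1)=u^4(v^2-1)(v^2+1),\qquad u^4\le n.
\]
This is the scaled equation $n!=u^4(v^4-1)$ mentioned in the introduction.

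The second step bounds $v^2+1$ by $T_1(n)$. Since $v$ is odd, $v^2+1\equiv2\pmod 4$, so $v_2(v^2+1)=1$. Every odd prime $q\mid v^2+1$ satisfies $q\equiv1\pmod4$, because $-1$ is a square modulo $q$. The factor $v^2+1$ divides $n!$, so each such $q$ is at most $n$ and $v_q(v^2+1)\le v_q(n!)$. Therefore the odd part of $v^2+1$ divides $T_1(n)$, and
\[
v^2+1\le 2T_1(n).
\]

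The final step is the size comparison. From $v^4-1<(v^2+1)^2\le 4T_1(n)^2$ and $u^4\le n$ we obtain
\[
n!=u^4(v^4-1)<4nT_1(n)^2,
\]
which is \eqref{eq:quartic-necessary}. The consequence stated in the proposition is the contrapositive: if $4nT_1(n)^2\le n!$, then no fourth power lies within distance $n$ of $n!$.

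I expect no real obstacle here, since the work is carried by Theorem~\ref{thm:classification}. The one point needing care is passing from $M=ab$ with $M$ a square to $a,b$ both squares, which relies on the coprimality supplied there. The strict inequality also uses $v\ge 2$, which holds because $b\ge n+1$ by \eqref{eq:square-coprimality}.
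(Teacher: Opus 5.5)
Your proposal is correct and follows the paper's proof essentially step for step: you apply Theorem~\ref{thm:classification} to the base $m^2$, use coprimality to write $a=u^2$ and $b=v^2$, bound $v^2+1\le 2T_1(n)$ from its prime support, and conclude $n!<u^4(v^2+1)^2\le 4nT_1(n)^2$. The closing appeal to $v\ge2$ is unnecessary, since $v^4-1<(v^2+1)^2$ holds for every $v$.
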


\begin{proof}
Theorem~\ref{thm:lower} excludes a lower fourth power because $m^4=(m^2)^2$.
Apply Theorem~\ref{thm:classification} with square base $m^2$ to obtain
\[
m^2=ab,\qquad n!=a^2(b^2-1),\qquad a^2\le n,\qquad \gcd(a,b)=1,
\]
where $b$ is odd.
Since $ab$ is a square and its factors are coprime, write $a=u^2$ and $b=v^2$.
Then
\[
m=uv,\qquad n!=u^4(v^4-1),\qquad u^4\le n,
\]
and $v$ is odd.
Put $C=v^2+1$.
Every odd prime divisor $q$ of $C$ has $q\equiv1\pmod4$, since $-1$ is a square modulo $q$.
Also $v_2(C)=1$ and $C\mid n!$, so $C\le2T_1(n)$.
Since $C^2>v^4-1=n!/u^4$, we obtain
\[
n!<u^4C^2\le4u^4T_1(n)^2\le4nT_1(n)^2.\qedhere
\]
\end{proof}

\subsection{A character estimate modulo four}

All logarithms are natural, and the finite decimals below denote exact rational endpoints.
For completeness, the numerical bounds in this section can be obtained from four terms of the logarithm series.
For $1\le u\le2$, put $z=(u-1)/(u+1)$ and
\[
s(u)=2\left(z+\dfrac{z^3}{3}+\dfrac{z^5}{5}+\dfrac{z^7}{7}\right),
\qquad r(u)=\dfrac{2z^9}{9(1-z^2)}.
\]
The geometric remainder gives
\begin{equation}\label{eq:quartic-log-series}
0\le\log u-s(u)\le r(u).
\end{equation}
If a rational $t\ge1$ is written as $t=2^a u$, with $a\ge0$ an integer and $1\le u\le2$, then
\begin{equation}\label{eq:quartic-log-enclosure}
\begin{aligned}
a s(2)+s(u)&\le\log t,\\
\log t&\le a\bigl(s(2)+r(2)\bigr)+s(u)+r(u).
\end{aligned}
\end{equation}
In particular, these rational bounds give
\begin{equation}\label{eq:quartic-basic-logs}
\begin{gathered}
0.693<\log2<0.694,\qquad 1.098<\log3<1.099,\\
\log7>1.945,\qquad 2.397<\log11<2.4.
\end{gathered}
\end{equation}

Let $\mu$ be the M\"obius function and $\Lambda$ the von Mangoldt function.
Write $\chi=\chi_4$ for the real character modulo four, so that $\chi(r)=0$ for even $r$ and $\chi(r)=(-1)^{(r-1)/2}$ for odd $r$.
Define
\[
\begin{gathered}
C(x)=\sum_{r\le x}\dfrac{\chi(r)\Lambda(r)}r,\\
L=\sum_{r\ge1}\dfrac{\chi(r)}r=\dfrac\pi4,
\qquad M=\sum_{r\ge1}\dfrac{\chi(r)\log r}r.
\end{gathered}
\]
Both infinite series converge by alternation.

\begin{lemma}\label{lem:quartic-character}
For every real $x\ge6$,
\begin{equation}\label{eq:quartic-character-bound}
C(x)<\dfrac{93}{500}+\dfrac{\log x+83/360}{x}.
\end{equation}
\end{lemma}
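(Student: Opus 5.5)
The plan is to obtain \eqref{eq:quartic-character-bound} from the convolution identity $\log=\Lambda*1$, twisted by the completely multiplicative character $\chi$. This identity links $C(x)$ to the constants $L=\pi/4$ and $M$. For $x\ge1$ we have
\[
\sum_{r\le x}\dfrac{\chi(r)\log r}{r}
=\sum_{d\le x}\dfrac{\chi(d)\Lambda(d)}{d}\sum_{e\le x/d}\dfrac{\chi(e)}{e}.
\]
Writing $\sum_{e\le y}\chi(e)/e=L-R(y)$ with $R(y)=\sum_{e>y}\chi(e)/e$ gives the exact relation
\[
L\,C(x)=\sum_{r\le x}\dfrac{\chi(r)\log r}{r}+\sum_{d\le x}\dfrac{\chi(d)\Lambda(d)}{d}R(x/d).
\]
The first sum on the right equals $M$ minus its tail. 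That tail is an alternating series whose terms $\log r/r$ decrease for $r\ge3$, so it is bounded in absolute value by its first term, at most $(\log x)/x$.

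\emph{Main term.} Here $M=-L'(1,\chi)$, and the classical evaluation
\[
\dfrac{L'}{L}(1,\chi)=\gamma+2\log2+3\log\pi-4\log\Gamma(1/4)\approx0.2456
\]
gives $M/L\approx-0.2456$. This is a negative main term, and the margin up to $93/500$ must absorb the contribution of $R$. To stay within the paper's self-contained style, I would instead certify a rational upper bound for $M$ directly, using pairing of consecutive terms in the alternating series together with \eqref{eq:quartic-log-series}.

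\emph{Error from $R$ (the main obstacle).} The naive bound $|R(y)|\le1/y$ gives
\[
\Bigl|\sum_{d\le x}\dfrac{\chi(d)\Lambda(d)}{d}R(x/d)\Bigr|\le\dfrac{\psi(x)}{x},
\]
and after division by $L$ this is about $1.3$, far too large. I would improve it in three ways.
\begin{enumerate}
\item Pair consecutive odd terms to show that $R(y)$ lies within roughly $1/(2y)$ of zero, with its sign governed by the residue class of $\lfloor y\rfloor$ modulo $4$.
\item Observe that $d=2^j$ contributes nothing, since $\chi(2^j)=0$, and that only odd prime powers remain.
\item Split $d\le\sqrt x$ from $d>\sqrt x$. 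For $d>\sqrt x$, the value $R(x/d)$ is a short explicit sum over $e\le x/d+2$ that can be computed by hand. For $d\le\sqrt x$, the bound $|R|\le1/(2y)$ combined with the Rosser--Schoenfeld estimate $\theta(t)<1.01624\,t$ is enough.
\end{enumerate}
I expect the constant $93/500$ to come from this balance. If the uniform bound still falls short, I would first test whether the difference $C(x)-M/L$ can be bounded via a smoothed weight $(1-r/x)$, which improves $R$ to $O(1/y^2)$, and then unsmooth at the cost of the $(\log x+83/360)/x$ term.

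\emph{Finite range.} The bound will be proved uniformly for all $x$ at least some moderate $x_0$. Since $C$ is a step function and the right side of \eqref{eq:quartic-character-bound} decreases for $x\ge6$, on $6\le x<x_0$ it suffices to check the inequality at the left endpoints of the steps, namely the odd prime powers. That check is a direct rational computation using \eqref{eq:quartic-log-enclosure}.
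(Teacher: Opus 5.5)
Your identity $L\,C(x)=\sum_{r\le x}\chi(r)\log r/r+E(x)$, with $E(x)=\sum_{d\le x}\chi(d)\Lambda(d)R(x/d)/d$, is correct. The gap is that your plan for bounding $E(x)$ cannot reach $93/500$. It fails for every large $x$, so no choice of $x_0$ or finite check can repair it.

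The signs $\chi(d)$ on prime powers are exactly the unknown information, namely how primes split between the classes $1$ and $3$ modulo $4$. So all your refinements (pairing terms in $R$, discarding powers of $2$, splitting at $\sqrt x$, evaluating $R(x/d)$ exactly) still end in $E(x)\le\sum_{d\text{ odd}}\Lambda(d)\,|R(x/d)|/d$. By the prime number theorem this quantity tends to $\int_1^\infty|R(u)|\,du/u\approx0.39$:
\begin{itemize}
\item the block $x/3<d\le x$, where $R(x/d)=\pi/4-1$ exactly, alone contributes $(1-\pi/4)\log3\approx0.236$;
\item the blocks $x/5<d\le x/3$ and $x/7<d\le x/5$ add about $0.061$ and $0.027$.
\end{itemize}
The range $d\le\sqrt x$ is negligible. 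The loss comes from $d$ comparable to $x$, and it persists even with exact values of $R$. After division by $L$ this is about $0.50$. With $M/L\approx-0.246$, the best your method can give is $C(x)\lesssim0.26$, which exceeds $93/500$.

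The smoothing fallback fails too. With the weight $(1-r/x)$, the inner remainder becomes $y^{-1}\int_0^yR(t)\,dt\sim L(0,\chi)/y=1/(2y)$, not $O(1/y^2)$. Unsmoothing costs $x^{-1}\sum_{r\le x}\chi(r)\Lambda(r)$, which is $O(\log x/x)$ only given an explicit prime number theorem modulo $4$. That is exactly the input the lemma avoids.

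The missing idea is to invert in the opposite direction, as the paper does. From $\Lambda=\mu*\log$,
\[
C(x)=\sum_{d\le x/3}\dfrac{\mu(d)\chi(d)}{d}\sum_{r\le x/d}\dfrac{\chi(r)\log r}{r}.
\]
Now the errors are weighted by odd squarefree $d$, whose count satisfies the elementary bound $Q(t)\le\frac49t+1$. The total error is then at most $\frac{4(1+\log3)}{27}+\frac{\log x-4/9}{x}$, with no information on primes needed.

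The main term $MZ(y)$ is handled by $M<0$ together with the lower bound $LZ(y)\ge\frac59-\frac1y$, which comes from $\mu*1=\delta_1$. With $M/L<-9/40$ this gives the constant $\frac{4(1+\log3)}{27}-\frac18<\frac{93}{500}$ and exactly the secondary term $(\log x+83/360)/x$.

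Within your own framework, one could instead try treating the dominant block as $(L-1)\bigl(C(x)-C(x/3)\bigr)$ and arguing recursively, but that is a different argument from the one you propose.

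Separately, your finite-range reduction checks the wrong endpoint. Since $C$ is constant on each step and the right side decreases, the binding point of each step is its right end, not its left end.
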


\begin{proof}
Put
\[
Q(t)=\sum_{\substack{d\le t\\d\text{ odd}}}|\mu(d)|.
\]
An odd squarefree integer is not divisible by $9$.
The admissible residues modulo $18$ are $1,3,5,7,11,13,15,17$; counting them in complete blocks and the final partial block gives
\begin{equation}\label{eq:quartic-squarefree}
Q(t)\le\dfrac49t+1\qquad(t\ge1).
\end{equation}

The function $f(t)=\log t/t$ is decreasing and convex for $t\ge7$, since $f''(t)=(2\log t-3)/t^3>0$ there.
The alternating tail $f(7)-f(9)+f(11)-\cdots$ is therefore at least $f(7)/2$.
Indeed, for a decreasing convex sequence $u_j\to0$,
\[
2\sum_{j\ge0}(-1)^j u_j-u_0
=\sum_{j\ge0}(u_{2j}-2u_{2j+1}+u_{2j+2})\ge0.
\]
It follows from \eqref{eq:quartic-basic-logs} that
\[
M\le-\dfrac{\log3}{3}+\dfrac{\log5}{5}-\dfrac{\log7}{14}
<-\dfrac9{50};
\]
here $\log5<1.61$ follows from \eqref{eq:quartic-log-enclosure}.
As $0<L=\pi/4<4/5$, we obtain $M/L<-9/40$.

Put $y=x/3$ and
\[
Z(y)=\sum_{d\le y}\dfrac{\mu(d)\chi(d)}d.
\]
The identity $\mu*1=\delta_1$ gives the finite convolution
\[
\sum_{d\le y}\dfrac{\mu(d)\chi(d)}d
\sum_{r\le y/d}\dfrac{\chi(r)}r=1.
\]
The alternating-series remainder satisfies
\[
\left|L-\sum_{r\le t}\dfrac{\chi(r)}r\right|\le\dfrac1t
\qquad(t\ge1).
\]
Consequently, \eqref{eq:quartic-squarefree} yields
\begin{equation}\label{eq:quartic-mobius-lower}
LZ(y)\ge1-\dfrac{Q(y)}y\ge\dfrac59-\dfrac1y>0,
\end{equation}
where the last inequality uses $y\ge2$.

The identity $\Lambda=\mu*\log$ gives
\[
C(x)=\sum_{d\le x/3}\dfrac{\mu(d)\chi(d)}d
\sum_{r\le x/d}\dfrac{\chi(r)\log r}r.
\]
The outer sum stops at $x/3$ because the inner sum vanishes when $x/d<3$.
For $t\ge3$, the alternating-series estimate and the monotonicity of $\log t/t$ give
\[
\left|\sum_{r\le t}\dfrac{\chi(r)\log r}r-M\right|
\le\dfrac{\log t}{t}.
\]
Hence
\begin{equation}\label{eq:quartic-convolution-bound}
C(x)\le MZ(y)+\dfrac1x
\sum_{d\le y}|\mu(d)\chi(d)|\log(x/d).
\end{equation}
Partial summation and \eqref{eq:quartic-squarefree} imply
\begin{align*}
\sum_{d\le y}|\mu(d)\chi(d)|\log(x/d)
&=Q(y)\log3+\int_1^y\dfrac{Q(t)}t\,dt\\
&\le\dfrac{4x(1+\log3)}{27}+\log x-\dfrac49.
\end{align*}
Combining \eqref{eq:quartic-mobius-lower}, $M/L<-9/40$ and \eqref{eq:quartic-convolution-bound}, we obtain
\begin{align*}
C(x)&<\dfrac{4(1+\log3)}{27}-\dfrac18
+\dfrac{\log x+83/360}{x}\\
&<\dfrac{93}{500}+\dfrac{\log x+83/360}{x},
\end{align*}
where the last step uses $\log3<1.099$.
\end{proof}

\subsection{The factorial product comparison}

Define
\[
\begin{gathered}
A(n)=\sum_{r\le n}\dfrac{\Lambda(r)}r,
\qquad A_1(n)=\sum_{\substack{r\le n\\r\equiv1\pmod4}}\dfrac{\Lambda(r)}r,\\
S_1(n)=\sum_{\substack{q\equiv1\pmod4\text{ prime},\ j\ge1\\q^j\le n}}\dfrac{\log q}{q^j},\\
E_3(n)=\sum_{\substack{q\equiv3\pmod4\text{ prime},\ j\ge1\\q^{2j}\le n}}\dfrac{\log q}{q^{2j}}.
\end{gathered}
\]
Thus $A_1(n)=S_1(n)+E_3(n)$.
Also put
\[
\psi(x)=\sum_{r\le x}\Lambda(r),
\qquad
\mathcal F(n)=\sum_{r\le n}\left\{\dfrac nr\right\}\Lambda(r).
\]
We use the unconditional global estimate of Rosser and Schoenfeld \cite[Theorem~12, equation~(3.35)]{RosserSchoenfeld},
\begin{equation}\label{eq:RS-psi}
\psi(x)<1.03883x\qquad(x>0).
\end{equation}

\begin{lemma}\label{lem:quartic-fractional}
For every positive integer $n$,
\begin{equation}\label{eq:quartic-fractional}
\mathcal F(n)<\dfrac{39}{50}n.
\end{equation}
\end{lemma}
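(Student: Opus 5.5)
The plan is to prove \eqref{eq:quartic-fractional} by splitting the range of $r$ according to $\lfloor n/r\rfloor$. On the two longest blocks, $(n/2,n]$ and $(n/3,n/2]$, I evaluate the fractional part exactly and use partial summation. The remaining $r\le n/3$ are bounded crudely through \eqref{eq:RS-psi}. A bound of $\{n/r\}$ by $1$ everywhere only gives $\mathcal F(n)\le\psi(n)<1.039n$, and the identity $\mathcal F(n)=nA(n)-\log n!$ is circular without a sharp Mertens estimate. So the savings must come from the exact shape of $\{n/r\}$ on the blocks where it is largest in total mass.

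For $r\le n/3$ I use $\{n/r\}<1$ and \eqref{eq:RS-psi}, giving a contribution below $1.03883\,n/3<0.3463\,n$. For $j=1,2$ and $r\in(n/(j+1),n/j]$ one has $\{n/r\}=g_j(r)$ with $g_j(t)=n/t-j$. This function decreases from $1$ to $0$ across the block, and $g_j(n/j)=0$. Partial summation therefore gives
\[
\sum_{n/(j+1)<r\le n/j}\Lambda(r)g_j(r)=\int_{n/(j+1)}^{n/j}\bigl(\psi(t)-\psi(n/(j+1))\bigr)\dfrac{n}{t^2}\,dt .
\]
Here I bound $\psi(t)$ above by \eqref{eq:RS-psi}, and $\psi(n/(j+1))$ below by $c\,n/(j+1)$. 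Substituting $t=nu$ yields the contributions
\[
\Bigl(1.03883\log2-\dfrac c2\Bigr)n\quad(j=1),\qquad
\Bigl(1.03883\log\dfrac32-\dfrac c3\Bigr)n\quad(j=2).
\]
With $c=0.9$ the three pieces add up to at most about $0.720-0.450+0.422-0.300+0.347<0.74$. This is comfortably below $39/50$, and the margin even tolerates $c\approx0.85$.

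The main obstacle is the lower Chebyshev bound $\psi(s)\ge c\,s$ at $s=n/2$ and $s=n/3$, since only the upper estimate \eqref{eq:RS-psi} has been quoted so far. I would first try the Rosser--Schoenfeld lower bound $\theta(x)>x\bigl(1-1/(2\log x)\bigr)$ for $x\ge563$ \cite{RosserSchoenfeld}. Together with $\psi\ge\theta$, it gives $\psi(s)\ge0.9s$ once $\log s\ge5$, which is certainly the case for $s\ge563$. This settles every $n\ge3\cdot563=1689$.

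The remaining range $n<1689$ is finite. There I would either compute $\mathcal F(n)$ directly, or rerun the same block argument with the trivial lower bound $\psi(s)\ge\theta(s)$ evaluated from the primes up to $s$. If a fully computation-free argument is wanted, as elsewhere in the paper, the fallback is a short chain of interval comparisons. On each interval one uses monotonicity of $\psi$ in $n/(j+1)$ and tabulated values of $\psi$ at the interval endpoints; the very small $n$, where all three blocks are short, are checked by hand.
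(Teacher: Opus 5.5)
Your argument for large $n$ is correct. The block identity $\sum_{n/(j+1)<r\le n/j}\Lambda(r)\{n/r\}=\int_{n/(j+1)}^{n/j}(\psi(t)-\psi(n/(j+1)))\,n t^{-2}\,dt$ holds. Together with the Rosser--Schoenfeld bound $\theta(x)>x(1-1/(2\log x))$ for $x\ge563$, it gives $\mathcal F(n)<0.74n$ for $n\ge1689$. The lemma, however, is claimed for every positive integer $n$, and you leave $n<1689$ to a computation you do not carry out. Your computation-free fallback does not close this range. Even with the exact values of $\psi$ at $n/2$ and $n/3$, which are better than $\theta$, the three-piece bound fails for small $n$. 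At $n=10$, with $\psi(5)=\log60$ and $\psi(10/3)=\log6$, the pieces give about $7.20-4.09+4.21-1.79+3.46\approx8.99>7.8$. At $n=30$, with $\psi(15)\approx12.79$ and $\psi(10)\approx7.83$, they give about $24.0>23.4$. So as proposed you need a real finite verification of about seventeen hundred cases, plus an external lower Chebyshev bound that the paper never uses.

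The missing idea is that the upper bound \eqref{eq:RS-psi} alone suffices once $\{n/r\}$ is dominated by a \emph{nonincreasing} weight $h$. For such $h$, $\sum_r\Lambda(r)h(r)=-\int\psi(t)\,dh(t)$ with $-dh\ge0$, so only upper bounds on $\psi$ enter. Your weight jumps upward at $r=n/2$, and that jump is what produces the term $-\psi(n/2)$ demanding a lower bound. The terms $\pm\psi(n/3)$ in your decomposition actually cancel, so the lower bound at $n/3$ was never needed. The paper uses the telescoping majorant $\{n/r\}\le\sum_{j\ge1,\ r\le(j+1)n/(j+2)}1/(j(j+1))$. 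It equals $1$ for $r\le2n/3$ and $1/J$ on $(Jn/(J+1),(J+1)n/(J+2)]$, and it yields $\mathcal F(n)\le\sum_j\psi((j+1)n/(j+2))/(j(j+1))<1.03883\,n\sum_j1/(j(j+2))=0.7791225\,n$ uniformly in $n$. Your own method repairs just as easily: bound $\{n/r\}<1$ for all $r\le n/2$, and keep the exact partial summation only on $(n/2,n]$. The trivial bound $\psi(n/2)$ then cancels the boundary term $-\psi(n/2)$, giving
\[
\mathcal F(n)\le\int_{n/2}^{n}\psi(t)\,\dfrac{n}{t^2}\,dt<1.03883\,n\log2<0.721\,n
\]
for every positive integer $n$. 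This needs no lower bound and no finite check, and it is sharper than the paper's constant.
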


\begin{proof}
For $1\le r\le n$, telescoping gives the elementary majorant
\[
\left\{\dfrac nr\right\}
\le\sum_{\substack{j\ge1\\r\le(j+1)n/(j+2)}}\dfrac1{j(j+1)}.
\]
For $r\le2n/3$ the right side is $1$, and for $r=n$ both sides vanish.
Otherwise, choose $J\ge2$ with $Jn/(J+1)<r\le(J+1)n/(J+2)$.
The right side is $1/J$, whereas $(n-r)/r<1/J$.
Multiplying by $\Lambda(r)$ and summing, then using \eqref{eq:RS-psi}, gives
\begin{align*}
\mathcal F(n)
&\le\sum_{j=1}^{\infty}\dfrac{\psi((j+1)n/(j+2))}{j(j+1)}\\
&<1.03883n\sum_{j=1}^{\infty}\dfrac1{j(j+2)}\\
&=0.7791225n<\dfrac{39}{50}n.\qedhere
\end{align*}
\end{proof}

\begin{samepage}
\begin{proposition}\label{prop:quartic-global}
For every integer $n\ge8$,
\begin{equation}\label{eq:quartic-global}
4nT_1(n)^2<n!.
\end{equation}
For every $n\ge121$, one has the explicit bound
\begin{equation}\label{eq:quartic-explicit-decay}
\log\dfrac{n!}{T_1(n)^2}
>\dfrac{57}{500}n-\log n-\dfrac{81}{50}.
\end{equation}
\end{proposition}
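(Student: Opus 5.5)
The plan is to take logarithms and express everything through Legendre's formula in the form $\log n!=\sum_{r\le n}\lfloor n/r\rfloor\Lambda(r)=nA(n)-\mathcal F(n)$. I will prove the explicit bound \eqref{eq:quartic-explicit-decay} for $n\ge121$, check that it implies \eqref{eq:quartic-global} there, and then treat $8\le n\le120$ by a small number of interval comparisons.

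\emph{The decomposition.} Split the prime powers $r\le n$ by the residue of the underlying prime: $q=2$, $q\equiv1\pmod4$, or $q\equiv3\pmod4$. Legendre's formula factors $n!=S_nT_1(n)T_3(n)$, where $T_3=T(n,4)^{-1}\prod_{q\equiv3(4)}q^{v_q(n!)}$ collects the $q\equiv3$ primes. Hence
\[
\log\frac{n!}{T_1(n)^2}=\log S_n+\log T_3(n)-\log T_1(n).
\]
Each piece equals $n$ times a partial sum of $\Lambda(r)/r$, minus the corresponding fractional-part sum. The $q\equiv1$ fractional part enters with a favourable sign, so it can be dropped. This gives
\[
\log\frac{n!}{T_1(n)^2}\ge n\bigl(A_{\mathrm{even}}(n)+A_3^{\ast}(n)-S_1(n)\bigr)-\mathcal F_{2}(n)-\mathcal F_{3}(n),
\]
where $A_3^\ast$ is the sum of $\Lambda(r)/r$ over powers of primes $q\equiv3\pmod4$, and $\mathcal F_2,\mathcal F_3$ are the parts of $\mathcal F(n)$ supported on powers of $2$ and of primes $\equiv3\pmod4$.

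\emph{The main term.} Since $A_1=S_1+E_3$ and the odd powers of primes $q\equiv3$ have $\chi=-1$, we get $A_3^\ast-S_1=-C(n)+2E_3(n)\ge -C(n)$. Lemma~\ref{lem:quartic-character} bounds $C(n)$ by $93/500+(\log n+83/360)/n$. The dyadic part satisfies $A_{\mathrm{even}}(n)\ge\log2\,(1-2/n)$, which is a finite geometric sum. Together these give a main term of size at least $(\log2-93/500)n-\log n-O(1)\approx0.507n-\log n-O(1)$.

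\emph{The fractional parts (main obstacle).} It remains to bound $\mathcal F_2+\mathcal F_3$ by roughly $0.39n$ plus a constant. The naive route of bounding this by all of $\mathcal F(n)<\tfrac{39}{50}n$ from Lemma~\ref{lem:quartic-fractional} is too weak, since it would swamp the $0.507n$. First, $\mathcal F_2(n)\le\log2\cdot\log_2 n=\log n$ is negligible. For $\mathcal F_3$ I would rerun the telescoping majorant of Lemma~\ref{lem:quartic-fractional} with $\psi$ replaced by the counting function $\psi_3(x)$ of prime powers $q^j\le x$ with $q\equiv3\pmod4$. Writing $\psi_3=\tfrac12(\psi_{\mathrm{odd}}-\psi(x,\chi))+(\text{even-}j\text{ powers})$, I need an estimate of the shape $\psi_3(x)\le\tfrac12(1.03883x)+\tfrac12|\psi(x,\chi)|+O(\sqrt x)$.

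The unweighted character sum $\psi(x,\chi)$ is the delicate point. I would try to derive a one-sided bound for it by partial summation from a version of Lemma~\ref{lem:quartic-character} in both directions; the same M\"obius convolution should give a lower bound on $C(x)$ as well. If that is too lossy, I would instead accept a slightly smaller constant than $57/500$ over the range, provided the margin stays positive. The target is $\mathcal F_3(n)\le\tfrac{39}{100}n+O(1)$. Given that, $0.507-0.39>57/500$, and collecting the explicit constants should yield \eqref{eq:quartic-explicit-decay}.

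\emph{Deducing the global inequality.} For $n\ge121$, \eqref{eq:quartic-explicit-decay} gives $\log(n!/T_1^2)>\log(4n)$ as soon as $\tfrac{57}{500}n>2\log n+\log4+\tfrac{81}{50}$. This holds at $n=121$, using the bound $\log 121<4.8$ from \eqref{eq:quartic-log-enclosure}, and the gap increases with $n$. For $8\le n\le120$, I would note that $T_1(n)$ is constant between consecutive primes $\equiv1\pmod4$ and their powers, while $n!/(4n)=(n-1)!/4$ increases. It therefore suffices to check \eqref{eq:quartic-global} at the left endpoint of each interval. I would group these into about six intervals, bounding $T_1$ on each by its value at the right endpoint and $(n-1)!$ from below by a power of $2$, as in the finite range of Lemma~\ref{lem:comparison}.
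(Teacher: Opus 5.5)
\paragraph{The gap: the term $2nE_3(n)$.}
Your decomposition is sound. Keeping $\mathcal F_1$ with its favourable sign is even slightly sharper than the paper's bound, which subtracts all of $\mathcal F(n)$. But there is a genuine gap, created by the step $A_3^\ast-S_1=-C(n)+2E_3(n)\ge-C(n)$.

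The discarded term is not small. For $n\ge121$, the four prime powers $3^2,3^4,7^2,11^2$ already give $2E_3(n)>\frac{387}{1000}$. This is exactly the margin that makes the crude bound of Lemma~\ref{lem:quartic-fractional} sufficient. If you keep it and estimate $\mathcal F_2+\mathcal F_3\le\mathcal F(n)<\frac{39}{50}n$, your own inequality gives
\[
\log\frac{n!}{T_1(n)^2}>\Bigl(\log2-\frac{93}{500}+\frac{387}{1000}-\frac{39}{50}\Bigr)n-\log n-2\log2-\frac{83}{360}.
\]
Since $\log2-\frac{579}{1000}>\frac{57}{500}$ and $2\log2+\frac{83}{360}<\frac{81}{50}$, this yields \eqref{eq:quartic-explicit-decay}. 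That is the paper's argument.

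\paragraph{Why your replacement estimate does not close the gap.}
Without $2nE_3(n)$, you need $\mathcal F_3(n)\le\frac{39}{100}n+O(1)$ with explicit constants and almost no slack, and you leave this unproved. The route you sketch fails near the start of the range.

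At $n=121$, combine Lemma~\ref{lem:quartic-character}, the dyadic sum and $\mathcal F_2(121)=3.109375\log2$. Your chain then reaches \eqref{eq:quartic-explicit-decay} only if $\mathcal F_3(121)<45.5$. The telescoping majorant is
\[
\sum_{j\ge1}\frac{\psi_3\bigl((j+1)n/(j+2)\bigr)}{j(j+1)}.
\]
Evaluated with the exact values of $\psi_3$, it is about $48.0$ at $n=121$. Even the idealized $\psi_3(x)=x/2$ would give $0.375\cdot121\approx45.4$.

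Asymptotically the picture is no better. The Rosser--Schoenfeld input alone uses $\frac12\cdot1.03883\cdot\frac34\approx0.3896$ of the available $\log2-\frac{93}{500}-\frac{57}{500}\approx0.3931$. That leaves about $0.0036n$ to absorb both $\frac12|\psi(x,\chi)|$ and the second $\log n$ contributed by $\mathcal F_2$. Settling for a constant below $\frac{57}{500}$ would prove a different statement.

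\paragraph{The finite range $8\le n\le120$.}
A smaller point concerns this range. $T_1(n)$ is not constant between consecutive primes $q\equiv1\pmod4$, because $v_q(n!)$ jumps at every multiple of $q$; for instance $T_1(9)=5$ and $T_1(10)=25$.

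Bounding $T_1$ by its value at the right endpoint is legitimate, since $T_1$ is nondecreasing. With only six intervals it fails, however. On $[17,28]$ one would need $4T_1(28)^2<16!$, whereas $4T_1(28)^2=4(5^6\cdot13^2\cdot17)^2>8\cdot10^{15}>16!$. You would need many more intervals.

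The paper avoids this by using $\log T_1(n)\le nS(B)$. This bound grows linearly in $n$ and combines with Stirling's lower bound, so six intervals suffice.
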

\end{samepage}

\begin{proof}
First suppose $n\ge121$ and put $s=\lfloor\log_2 n\rfloor$.
Separating the powers of $2$ in $A(n)$ gives
\[
A(n)-2A_1(n)=\log2\,(1-2^{-s})-C(n).
\]
The four terms $3^2,3^4,7^2,11^2$ already yield
\begin{align*}
2E_3(n)&\ge2\log3\left(\dfrac19+\dfrac1{81}\right)
+\dfrac{2\log7}{49}+\dfrac{2\log11}{121}\\
&>\dfrac{387}{1000},
\end{align*}
by \eqref{eq:quartic-basic-logs}.
Legendre's formula gives $\log T_1(n)\le nS_1(n)$, while
\[
\log(n!)=nA(n)-\mathcal F(n).
\]
Consequently,
\begin{align*}
\log\dfrac{n!}{T_1(n)^2}
&\ge n\log2\,(1-2^{-s})-nC(n)+2nE_3(n)-\mathcal F(n)\\
&>\left(\log2-\dfrac{579}{1000}\right)n-\log n
-\left(2\log2+\dfrac{83}{360}\right)\\
&>\dfrac{57}{500}n-\log n-\dfrac{81}{50}.
\end{align*}
Here we used $n2^{-s}<2$, Lemmas~\ref{lem:quartic-character} and~\ref{lem:quartic-fractional}, and \eqref{eq:quartic-basic-logs}.
This proves \eqref{eq:quartic-explicit-decay}.
Subtracting $\log(4n)$, we obtain
\[
\log\dfrac{n!}{4nT_1(n)^2}
>\dfrac{57}{500}n-2\log n-\log4-\dfrac{81}{50}.
\]
At $n=121$, the bounds $\log121<4.8$ and $\log4<1.4$ give the strict lower bound
\[
\dfrac{57}{500}\cdot121-2\cdot4.8-1.4-\dfrac{81}{50}
=\dfrac{587}{500}>0.
\]
The derivative of the preceding scalar lower bound is $57/500-2/n>0$ for $n\ge121$.
Thus \eqref{eq:quartic-global} holds in the entire uniform range.

It remains to treat $8\le n\le120$.
For an upper endpoint $B$, define
\[
S(B)=\sum_{\substack{p\le B\text{ prime}\\p\equiv1\pmod4}}
\dfrac{\log p}{p-1}.
\]
The primes congruent to one modulo four up to $120$ are exactly those in Table~\ref{tab:quartic-prime-logs}.
Their completeness follows by trial division by $2,3,5,7$.
The displayed upper bounds $\log p<b_p$ follow from \eqref{eq:quartic-log-enclosure}.
Summing $b_p/(p-1)$ gives $S(B)<U$ for each row of Table~\ref{tab:quartic-intervals}.

\begin{table}[htbp]
\centering
\caption{Rational upper bounds $\log p<b_p$ for the relevant primes up to $120$.}
\label{tab:quartic-prime-logs}
\begin{tabular}{r*{7}{r}}
\toprule
$p$ & $5$ & $13$ & $17$ & $29$ & $37$ & $41$ & $53$\\
$b_p$ & $1.61$ & $2.57$ & $2.84$ & $3.37$ & $3.62$ & $3.72$ & $3.98$\\
\midrule
$p$ & $61$ & $73$ & $89$ & $97$ & $101$ & $109$ & $113$\\
$b_p$ & $4.12$ & $4.30$ & $4.49$ & $4.58$ & $4.62$ & $4.70$ & $4.73$\\
\bottomrule
\end{tabular}
\end{table}

For $n\le B$, Legendre's formula implies $\log T_1(n)\le nS(B)<nU$.
Stirling's standard lower bound $n!>\sqrt{2\pi n}(n/e)^n$ \cite{HardyWright} therefore gives
\begin{align*}
\log\dfrac{n!}{4nT_1(n)^2}
&>\left(n-\dfrac12\right)\log n-(1+2U)n
+\dfrac12\log(2\pi)-\log4\\
&>H_U(n),
\end{align*}
where
\[
H_U(n)=\left(n-\dfrac12\right)\log n-(1+2U)n-\dfrac12.
\]
The last step uses $\pi e>8$, which follows from $\pi>3$ and $e>8/3$.

\begin{table}[htbp]
\centering
\caption{Six interval bounds. In each row, $S(B)<U$, $\log A>\ell_A$ and $H_U(A)>h$.}
\label{tab:quartic-intervals}
\begin{tabular}{rrrrr}
\toprule
$A$ & $B$ & $U$ & $\ell_A$ & $h$\\
\midrule
$8$ & $12$ & $5/12$ & $2.079$ & $2/5$\\
$13$ & $16$ & $5/8$ & $2.564$ & $23/10$\\
$17$ & $28$ & $4/5$ & $2.833$ & $2$\\
$29$ & $52$ & $9/8$ & $3.367$ & $6/5$\\
$53$ & $100$ & $10/7$ & $3.970$ & $17/5$\\
$101$ & $120$ & $8/5$ & $4.615$ & $39$\\
\bottomrule
\end{tabular}
\end{table}

The bounds $\log A>\ell_A$ again follow from \eqref{eq:quartic-log-enclosure}.
Each row satisfies the rational inequalities
\[
\left(A-\dfrac12\right)\ell_A-(1+2U)A-\dfrac12\ge h>0,
\qquad \ell_A-2U-\dfrac1{2A}>0.
\]
Moreover,
\[
H_U'(x)=\log x-2U-\dfrac1{2x},\qquad
H_U''(x)=\dfrac1x+\dfrac1{2x^2}>0.
\]
Thus $H_U$ is increasing on each listed interval and is positive at its left endpoint.
The six intervals cover $8\le n\le120$ and complete the proof of \eqref{eq:quartic-global}.
\end{proof}

\begin{theorem}\label{thm:quartic-complete}
For every integer $n\ge8$ and every integer $m\ge2$,
\[
|n!-m^4|>n.
\]
\end{theorem}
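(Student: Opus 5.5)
The plan is to combine Proposition~\ref{prop:quartic-product} with Proposition~\ref{prop:quartic-global}, so the argument is essentially a two-line deduction.

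Assume, for contradiction, that $n\ge8$, $m\ge2$ and $|n!-m^4|\le n$. Proposition~\ref{prop:quartic-product} applies directly to these hypotheses and gives the necessary condition
\[
n!<4nT_1(n)^2 .
\]
This step already contains the real reductions. Theorem~\ref{thm:lower}, applied to $(m^2)^2$, excludes a fourth power below $n!$. Theorem~\ref{thm:classification} shows that a fourth power above $n!$ must come from the scaled equation $n!=u^4(v^4-1)$ with $u^4\le n$ and $v$ odd. The factor $v^2+1$ has odd prime divisors only $\equiv1\pmod4$ and $2$-adic valuation $1$, which bounds it by $2T_1(n)$. So nothing new needs to be proved at this stage; I only need to check that the hypotheses $n\ge8$ and $m\ge2$ match those of the proposition, which they do.

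Proposition~\ref{prop:quartic-global} states the opposite inequality $4nT_1(n)^2<n!$ for every $n\ge8$. The two inequalities contradict each other, so $|n!-m^4|>n$.

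The substantive obstacle, namely the uniform comparison $4nT_1(n)^2<n!$, has already been handled upstream. For $n\ge121$ it rests on the character bound of Lemma~\ref{lem:quartic-character} together with Lemma~\ref{lem:quartic-fractional}, and for $8\le n\le120$ on the six interval estimates in Table~\ref{tab:quartic-intervals}. The only care needed in the present proof is to confirm that the two propositions cover the same range $n\ge8$ without gaps, and that the ``Consequently'' clause of Proposition~\ref{prop:quartic-product} is exactly the form being invoked.
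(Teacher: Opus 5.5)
Your proposal is correct and is essentially the paper's own proof. The paper likewise derives $n!<4nT_1(n)^2$ from any failure via Proposition~\ref{prop:quartic-product} and contradicts it with Proposition~\ref{prop:quartic-global}, both of which are stated for the same range $n\ge8$.
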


\begin{proof}
A failure would imply $n!<4nT_1(n)^2$ by Proposition~\ref{prop:quartic-product}, contradicting Proposition~\ref{prop:quartic-global}.
\end{proof}

\begin{corollary}\label{cor:large-even}
For every $n\ge8$, every even $k\ge4$, and every $m\ge2$, we have $|n!-m^k|>n$.
\end{corollary}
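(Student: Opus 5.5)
The plan is to reduce every even exponent $k\ge4$ to one of two results already proved: Theorem~\ref{thm:quartic-complete} for fourth powers and Theorem~\ref{thm:cubic-separation} for exponents divisible by three. Write $k=2j$ with $j\ge2$. Then either $j$ is even, so $4\mid k$, or $j$ is odd and at least three. In the second case $j$ has an odd prime divisor $\ell$, and $2\ell\mid k$.

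If $4\mid k$, set $M=m^{k/4}\ge2$. Then $m^k=M^4$, and Theorem~\ref{thm:quartic-complete} gives $|n!-m^k|=|n!-M^4|>n$ for every $n\ge8$.

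If $3\mid k$, set $M=m^{k/3}\ge2$ and apply Theorem~\ref{thm:cubic-separation}. That theorem even holds for $n\ge6$.

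The remaining case is $k=2j$ with $j$ odd, $j\ge5$, and $3\nmid j$. Let $\ell\ge5$ be a prime divisor of $j$. Here the square structure and the odd-prime structure are combined. Suppose $|n!-m^k|\le n$. Theorem~\ref{thm:lower}, applied to the square $(m^{j})^2$, excludes a power below $n!$. Lemma~\ref{lem:not-power} excludes equality. So $m^k-n!=d$ with $1\le d\le n$.

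Write $m^k=M^\ell$ with $M=m^{k/\ell}$. By Theorem~\ref{thm:odd-shift}, $d=p$ is a prime with $n/2<p\le n$. Now use that $m^k$ is also a square, $(m^{j})^2$. Lemma~\ref{lem:dichotomy} with exponent $2$ then gives either $d=a^2$ or $d$ prime; here we are in the prime case. Theorem~\ref{thm:classification} shows that a square above $n!$ at distance at most $n$ must have a square displacement $a^2$. But a prime $p$ is not a square, a contradiction. Hence every even $k\ge4$ is covered.

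This argument also makes the odd-prime step unnecessary. Theorem~\ref{thm:classification} alone already forces $d=a^2$ and $n!=a^2(b^2-1)$ with $m^{k/2}=ab$ and $\gcd(a,b)=1$. Since $m^{k/2}=(m^{j/\ell})^{\ell}$ is an $\ell$-th power and $a,b$ are coprime, both $a$ and $b$ are $\ell$-th powers: $a=u^\ell$ and $b=v^\ell$. Then
\[
n!=u^{2\ell}\bigl(v^{2\ell}-1\bigr),
\]
which rewrites as $n!=(u^2)^\ell\bigl((v^2)^\ell-1\bigr)$ with $(u^2)^\ell=a^2\le n$. Theorem~\ref{thm:odd-shift} forbids this, with $\varepsilon=1$ and base pair $(u^2,v^2)$.

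So the cleanest route is: Theorem~\ref{thm:classification} gives the scaled equation, coprimality makes $a$ and $b$ perfect $\ell$-th powers, and Theorem~\ref{thm:odd-shift} finishes. The only delicate point is the coprime-factorization step: if $ab$ is an $\ell$-th power with $\gcd(a,b)=1$, then each factor is an $\ell$-th power. This follows from unique factorization, since $a,b\ge1$.
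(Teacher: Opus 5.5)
Your argument is correct and is essentially the paper's proof. For an odd prime divisor $\ell$ of $k$, Theorem~\ref{thm:odd-shift} forces a prime displacement while Theorem~\ref{thm:classification} forces a square one, and $4\mid k$ is handled by Theorem~\ref{thm:quartic-complete}; your separate appeal to Theorem~\ref{thm:cubic-separation} when $3\mid k$ is unnecessary, since the same contradiction works for $\ell=3$, which is how the paper treats every odd prime divisor at once.
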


\begin{proof}
Suppose the inequality failed.
Because $k$ is even, Theorem~\ref{thm:classification} excludes a prime displacement.
If $k$ has an odd prime divisor $\ell$, write $m^k=(m^{k/\ell})^\ell$.
Lemma~\ref{lem:dichotomy} makes the displacement an $\ell$-th power, contrary to Theorem~\ref{thm:odd-shift}.
Otherwise $k$ is a power of $2$ and, since $k\ge4$, is divisible by $4$.
Apply Theorem~\ref{thm:quartic-complete} to $(m^{k/4})^4$.
\end{proof}

\section{Fifth-power separation}\label{sec:quintic}

We first give an elementary proof on $8\le n\le100$ and then use a short congruence table to extend the unconditional finite interval to $1100$.
An unconditional analytic argument treats all sufficiently large indices, while a GRH argument based on the exact conductor covers the full range $n\ge8$.
The displacement prime is denoted by $p$, while $q$ denotes a prime detecting a failed fifth-power residue condition.
For a rational prime $p\ne5$, put
\[
K=\mathbb Q(\zeta_5),\qquad L_p=K(p^{1/5}),
\]
where $\zeta_5$ is a primitive fifth root of unity.

\subsection{An elementary proof for the initial interval}\label{sec:quintic-elementary}

The next proof uses only the elementary valuation dichotomy, the algebraic prime-divisor description in Lemma~\ref{lem:cyclotomic}, and explicit integer inequalities and congruences.
In particular, this initial proof is independent of the Erd\H{o}s--Obl\'ath estimate and of the later analytic arguments.

\begin{theorem}\label{thm:quintic-elementary-range}
For every integer $8\le n\le100$, every $m\ge2$ and every positive integer $k$ divisible by $5$,
\[
|n!-m^k|>n.
\]
\end{theorem}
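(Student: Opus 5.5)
Writing $m^k=(m^{k/5})^5$ with $M=m^{k/5}\ge2$, it suffices to treat $k=5$. Suppose $8\le n\le100$ and $|n!-M^5|\le n$. Equality is excluded by Lemma~\ref{lem:not-power}. Lemma~\ref{lem:dichotomy} then gives one of two alternatives: a displacement $a^5$, or a prime displacement.

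\emph{Power displacement.} In this case $n!=a^5(b^5-\varepsilon)$ with $a^5\le n$ and $\gcd(a,b)=1$. Theorem~\ref{thm:odd-shift} already excludes this, but the statement asks for independence from the Erd\H{o}s--Obl\'ath estimate. So I will argue directly with Lemma~\ref{lem:cyclotomic}. The integer $C=(b^5-\varepsilon)/(b-\varepsilon)$ divides $n!$, satisfies $v_5(C)\le1$, and its other prime divisors are $\equiv1\pmod{10}$. Hence $C\le5\,T(n,10)$. For $n\le100$, $T(n,10)$ is an explicit finite product over the primes $11,31,41,61,71$ with their Legendre exponents. On the other side, $C>\tfrac23\bigl((n-1)!\bigr)^{4/5}$. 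I would split $[8,100]$ into a few intervals according to which of $11,31,41,61,71$ are present. On each interval I would compare a crude integer upper bound for $5T(n,10)$ with $\tfrac23((A-1)!)^{4/5}$ at the left endpoint $A$, using powers of $2$ as in the proof of Lemma~\ref{lem:comparison}. The bounds are very lopsided (for example $T(100,10)$ is tiny against $99!^{4/5}$), so this step is routine.

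\emph{Prime displacement.} Now $M^5=n!+\varepsilon p$ with $n/2<p\le n$ prime. For every prime $r\le n$ with $r\ne p$, reducing modulo $r$ shows that $\varepsilon p$, hence $p$, is a fifth power modulo $r$. This condition is nontrivial only for $r\equiv1\pmod5$; for $n\le100$ the usable primes are $11,31,41,61,71$. One more condition comes from $25\mid n!$ when $n\ge10$: then $p^4\equiv1\pmod{25}$, since the unit fifth powers modulo $25$ are $\pm1,\pm7$. For $n=8,9$ only $5\mid n!$, which gives no constraint, so those indices need a separate argument. There the candidates are $p=5,7$, and I would simply locate $n!$ between consecutive fifth powers ($8!=40320$ lies between $8^5=32768$ and $9^5=59049$; $9!=362880$ lies between $12^5=248832$ and $13^5=371293$) and check the distances directly.

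For $10\le n\le100$, the plan is to list the primes $p\le100$ with $p\equiv\pm1,\pm7\pmod{25}$, namely $7,43,349,\dots$ restricted to $p\le100$: this gives $7,43,101>100$, so $p\in\{7,43\}$ together with whatever $p\equiv\pm1\pmod{25}$ yields, namely $$p\in\{7,43,\ 101\text{ excluded}, 149\ldots\}\cup\{
abla\}.$$ I would recompute this list carefully; $p\equiv24,26,49,51,74,76,99\pmod{100}$ gives the primes $149$ and others beyond range, and within range $p\equiv\pm1\pmod{25}$ gives $p\in\{
m 7?\}$. For each surviving $p$, with $p>n/2$ (which forces $n<2p$), I would exhibit a prime $r\le n$, $r\equiv1\pmod5$, such that $p^{(r-1)/5}\not\equiv1\pmod r$. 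For example, $p=43$ would need $r=11$ or $31$ or $41$ within $n\ge43$. Any $p$ with $p<11$ forces $n<2p<22$. In that range the only relevant prime is $r=11$, and it must be handled either by checking that $p$ is not a fifth power modulo $11$ (the fifth powers modulo $11$ are $\pm1$), or by direct enumeration at those few $n$.

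The main obstacle is this last finite bookkeeping: making sure every admissible pair $(n,p)$ is killed by some small $r$ without resorting to computation beyond short modular exponentiations. I expect the $25$-adic condition to leave only a handful of primes, which makes this manageable. Any stubborn case I would resolve by bracketing $n!$ between consecutive fifth powers.
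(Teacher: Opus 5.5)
Your plan is correct and is essentially the paper's proof: reduce to $k=5$, bracket the smallest indices between consecutive fifth powers, rule out a fifth-power displacement by comparing $C\le5\,T(n,10)$ with a lower bound for $C$, and rule out a prime displacement by $p^4\equiv1\pmod{25}$ followed by fifth-power residue tests. (For the lower bound, the paper uses $C>b^3(b-1)$ with $b$ bounded below through $(n-1)!$ on the intervals $11\le n\le30$ and $31\le n\le100$, rather than $\tfrac23((n-1)!)^{4/5}$.) To close your bookkeeping: in $[7,100]$ the classes $\pm1\pmod{25}$ contain only the composites $24,26,49,51,74,76,99$, so exactly $p\in\{7,43\}$ survive. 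Then $p=7$ is detected by $r=11$ when $11\le n\le13$, but $n=10$ has no prime $r\equiv1\pmod5$ below it and must be bracketed directly ($20^5<10!<21^5$, which is why the paper brackets $n=8,9,10$). Finally, $p=43$ needs $r=31$ (where $43^6\equiv12^6\equiv2\pmod{31}$) or $r=41$, not $r=11$, since $43\equiv-1$ is a fifth power modulo $11$.
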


\begin{proof}
It suffices to treat $k=5$.
For $n=8,9,10$, the neighboring fifth powers have bases $(8,9)$, $(12,13)$ and $(20,21)$, respectively.
Their minimum distances from $n!$ are $7552$, $8413$ and $428800$, respectively, all exceeding $n$.
Hence suppose that $11\le n\le100$ and that
\[
m^5=n!+\varepsilon d,\qquad 1\le d\le n,\qquad \varepsilon\in\{-1,1\}.
\]
Zero displacement is excluded by Lemma~\ref{lem:not-power}.
By Lemma~\ref{lem:dichotomy}, either $d=a^5$ or $d=p$ is prime with $n/2<p\le n$.

Suppose first that $d=a^5$.
Then $m=ab$, $a^5\le n$, and $b\ge2$, since $b=1$ would imply $n!\le2n$.
Consequently
\[
D=b^5-\varepsilon=\dfrac{n!}{a^5}\ge(n-1)!.
\]
Consider the cyclotomic factor
\[
C=\dfrac{b^5-\varepsilon}{b-\varepsilon}
=b^4+\varepsilon b^3+b^2+\varepsilon b+1.
\]
This is an odd divisor of $n!$.
By the prime-divisor assertions of Lemma~\ref{lem:cyclotomic}, every prime divisor of $C$ other than $5$ is congruent to $1$ modulo $10$, and $v_5(C)\le1$.
Also
\begin{equation}\label{eq:quintic-elementary-C-lower}
C\ge b^4-b^3+b^2-b+1>b^3(b-1).
\end{equation}

For $11\le n\le30$, the only prime at most $n$ congruent to $1$ modulo $10$ is $11$.
Since $v_{11}(30!)=2$, we obtain
\[
C\le5\cdot11^2=605.
\]
On the other hand, $D\ge10!>5^5+1$, so $b\ge6$.
The lower bound
\[
C\ge b^3(b-1)+b(b-1)+1\ge1111
\]
is a contradiction.

For $31\le n\le100$, the possible primes congruent to $1$ modulo $10$ are $11,31,41,61,71$.
Their exponents in $100!$ are $9,3,2,1,1$, respectively, so
\begin{equation}\label{eq:quintic-elementary-C-upper}
C\le5\cdot11^9\cdot31^3\cdot41^2\cdot61\cdot71
<2^{3+36+15+12+6+7}=2^{79}.
\end{equation}
Here each factor is bounded by the indicated power of $2$.
The integer inequality $15!=1{,}307{,}674{,}368{,}000>2^{40}$ gives
\[
30!>2^{40}\cdot16^{15}=2^{100}.
\]
Thus $D\ge30!>2^{100}$, forcing $b\ge2^{20}$.
Equation~\eqref{eq:quintic-elementary-C-lower} now implies
\[
C>2^{60}(2^{20}-1)>2^{79},
\]
contradicting \eqref{eq:quintic-elementary-C-upper}.
This excludes every fifth-power displacement in the initial interval by elementary means.

It remains to consider $d=p$ prime, where $7\le p\le100$.
Since $p\ne5$, reduction of $m^5=n!+\varepsilon p$ modulo $5$ gives $5\nmid m$.
As $25\mid n!$, Euler's theorem gives
\[
p^4\equiv m^{20}\equiv1\pmod{25}.
\]
The roots are $1,7,18,24$ modulo $25$.
Among the primes $7\le p\le100$, only $p=7$ and $p=43$ lie in those classes.
If $p=7$, reduction modulo $11$ and squaring give
\[
m^{10}\equiv7^2\equiv5\pmod{11},
\]
contrary to Fermat's theorem, since $11\nmid m$.
If $p=43$, then $n\ge43$, so reduction modulo $31$ and taking the sixth power give
\[
m^{30}\equiv43^6\equiv12^6\equiv2\pmod{31},
\]
again contrary to Fermat's theorem, since $31\nmid m$.
Both signs $\varepsilon$ disappear in these even powers.
This proves the assertion for $k=5$.
For $5\mid k$, replace $m$ by $m^{k/5}$.
\end{proof}

\subsection{Residue conditions and Kummer extensions}

\begin{lemma}\label{lem:quintic-residue-conditions}
Let $n\ge11$ and $M\ge2$ be integers with $|n!-M^5|\le n$.
Then
\begin{equation}\label{eq:quintic-shift}
M^5=n!+\varepsilon p,\qquad \varepsilon\in\{-1,1\},\qquad n/2<p\le n,
\end{equation}
where $p\ne5$ is prime.
Moreover,
\begin{equation}\label{eq:quintic-mod25}
p^4\equiv1\pmod{25},
\end{equation}
and, for every prime $q\le n$ with $q\ne p$ and $q\equiv1\pmod5$,
\begin{equation}\label{eq:quintic-residue}
p^{(q-1)/5}\equiv1\pmod q.
\end{equation}
\end{lemma}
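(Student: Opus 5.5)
The plan is to read off the displacement form from the results already proved, and then obtain the two congruences by reducing \eqref{eq:quintic-shift} modulo $25$ and modulo the relevant primes $q$.

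\emph{Step 1: the displacement is a prime.} Suppose $|n!-M^5|\le n$. Lemma~\ref{lem:not-power} shows the displacement is nonzero. By Lemma~\ref{lem:dichotomy}, $d=|n!-M^5|$ is either a fifth power $a^5$ or a prime $p$ with $n/2<p\le n$. In the fifth-power case, Lemma~\ref{lem:dichotomy} gives $M=ab$ with $n!=a^5(b^5-\varepsilon)$ and $a^5\le n$. Theorem~\ref{thm:odd-shift} with $\ell=5$ excludes this for $n\ge8$. Hence $d=p$, which gives \eqref{eq:quintic-shift}. To see that $p\ne5$, note that $p>n/2\ge 11/2>5$.

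\emph{Step 2: the congruence modulo $25$.} Since $n\ge11\ge10$, we have $25\mid n!$, so $M^5\equiv\varepsilon p\pmod{25}$. As $p\ne5$, $5\nmid M$. Raising to the fourth power gives
\[
p^4=(\varepsilon p)^4\equiv M^{20}\pmod{25},
\]
and Euler's theorem, with $\varphi(25)=20$, gives $M^{20}\equiv1\pmod{25}$. This is \eqref{eq:quintic-mod25}.

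\emph{Step 3: the residue conditions.} Fix a prime $q\le n$ with $q\ne p$ and $q\equiv1\pmod5$. Then $q\mid n!$, so $M^5\equiv\varepsilon p\pmod q$. Since $q\ne p$ and $p$ is prime, $q\nmid p$, and therefore $q\nmid M$. Raise this congruence to the power $(q-1)/5$. Because $q\equiv1\pmod{10}$ ($q$ is odd), $(q-1)/5$ is even, so $\varepsilon^{(q-1)/5}=1$. Fermat's theorem then gives
\[
p^{(q-1)/5}\equiv M^{q-1}\equiv1\pmod q,
\]
which is \eqref{eq:quintic-residue}.

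No step is a genuine obstacle. The only points needing care are the parity of $(q-1)/5$, which removes the sign $\varepsilon$, and the verification that $q\nmid M$, which uses $q\ne p$.
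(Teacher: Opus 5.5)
Your proof is correct and follows the paper's argument essentially step for step. Lemma~\ref{lem:dichotomy} together with Theorem~\ref{thm:odd-shift} at $\ell=5$ forces a prime displacement $p\ge7$, and the two congruences then follow by reducing modulo $25$ with Euler's theorem, and modulo $q$ with the even exponent $(q-1)/5$ and Fermat's theorem.
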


\begin{proof}
Lemma~\ref{lem:not-power} excludes zero displacement.
Lemma~\ref{lem:dichotomy} and Theorem~\ref{thm:odd-shift} exclude a fifth-power displacement, giving \eqref{eq:quintic-shift}.
Since $n\ge11$, one has $p\ge7$.
For each prime $q\le n$ with $q\ne p$, reduction of \eqref{eq:quintic-shift} modulo $q$ gives $q\nmid M$.
In particular, $5\nmid M$.
Since $25\mid n!$, the congruence $M^5\equiv\varepsilon p\pmod{25}$, raised to the fourth power, gives $p^4\equiv M^{20}\equiv1\pmod{25}$ by Euler's theorem.
For $q\equiv1\pmod5$, raise $M^5\equiv\varepsilon p\pmod q$ to the even power $(q-1)/5$ and apply Fermat's theorem.
This proves \eqref{eq:quintic-residue}.
\end{proof}

\begin{lemma}\label{lem:quintic-fields}
Let $p\ne5$ be a rational prime, $K=\mathbb Q(\zeta_5)$ and $L_p=K(p^{1/5})$.
Then $L_p/\mathbb Q$ is Galois, $[L_p:\mathbb Q]=20$, and
\[
H=\operatorname{Gal}(L_p/K)\cong C_5
\]
is a normal subgroup of $\operatorname{Gal}(L_p/\mathbb Q)$.
The four nontrivial Hecke characters of $L_p/K$ have a common conductor $\mathfrak f$, supported above $5p$, with
\begin{equation}\label{eq:quintic-analytic-conductor}
\Nm\mathfrak f\le5^5p^4.
\end{equation}
\end{lemma}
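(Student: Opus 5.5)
We first establish the field-theoretic assertions, then bound the conductor via the discriminant.

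\emph{Degree and Galois property.} Since $p\ne5$ is prime, $x^5-p$ is Eisenstein at $p$, so $[\mathbb Q(p^{1/5}):\mathbb Q]=5$. Since $[K:\mathbb Q]=4$ is coprime to $5$, we get $[L_p:\mathbb Q]=20$. The field $L_p$ is the splitting field of $x^5-p$ over $\mathbb Q$, because it contains all roots $\zeta_5^jp^{1/5}$; hence $L_p/\mathbb Q$ is Galois. The subgroup $H=\operatorname{Gal}(L_p/K)$ has order $5$, so $H\cong C_5$. It is normal because $K/\mathbb Q$ is Galois.

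\emph{Common conductor.} Since $\zeta_5\in K$, the extension $L_p/K$ is cyclic of degree $5$. Class field theory attaches to it four nontrivial characters $\chi,\chi^2,\chi^3,\chi^4$ of a ray class group. For prime order, each $\chi^j$ with $1\le j\le4$ has the same kernel, so all four cut out the same field $L_p$. The conductor of a character depends only on the field it cuts out, because the local conductor exponent is determined by the local ramification filtration, and $\chi^j$ is trivial on exactly the same higher unit groups as $\chi$. So the four characters share one conductor $\mathfrak f=\mathfrak f(L_p/K)$.

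\emph{Support of $\mathfrak f$.} The relative discriminant of $x^5-p$ is $5^5p^4$ up to sign. Hence only primes above $5p$ ramify in $L_p/K$, and $\mathfrak f$ is supported above $5p$.

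\emph{The bound \eqref{eq:quintic-analytic-conductor}.} We use the conductor--discriminant formula \cite{Milne} for $L_p/K$:
\[
\mathfrak d_{L_p/K}=\prod_{j=0}^{4}\mathfrak f(\chi^j)=\mathfrak f^4 .
\]
The relative discriminant divides the discriminant of the order $\mathcal O_K[p^{1/5}]$, which is $\operatorname{disc}(x^5-p)\mathcal O_K=5^5p^4\mathcal O_K$. Taking norms from $K$ to $\mathbb Q$, and using $\Nm(c\mathcal O_K)=c^4$ for rational $c$, gives
\[
\Nm\mathfrak f^4\le(5^5p^4)^4 .
\]
Therefore $\Nm\mathfrak f\le5^5p^4$.

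The main obstacle is the conductor--discriminant identity itself. One must make sure the relative version $\mathfrak d_{L_p/K}=\prod_\chi\mathfrak f(\chi)$ is being applied, rather than the absolute version over $\mathbb Q$. One must also confirm that the containment $\mathfrak d_{L_p/K}\mid\operatorname{disc}(x^5-p)\mathcal O_K$ is justified; it is, by the standard index relation between an order and the maximal order.
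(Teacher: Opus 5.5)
Your proposal is correct and matches the paper's proof. The common conductor comes from the same-kernel argument, and the bound comes from the conductor--discriminant formula $\mathfrak d_{L_p/K}=\mathfrak f^4$ together with $\mathfrak d_{L_p/K}\mid 5^5p^4\mathcal O_K$. The only difference is cosmetic: you get $[L_p:\mathbb Q]=20$ from Eisenstein at $p$ over $\mathbb Q$ and $\gcd(4,5)=1$, whereas the paper applies Eisenstein's criterion at the primes of $K$ above $p$, using that $p$ is unramified in $K$.
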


\begin{proof}
The field $K$ has degree four and absolute discriminant $5^3$.
Every prime ideal of $K$ above $p$ has valuation one on $p$, because $p\ne5$ is unramified in $K$.
Thus $X^5-p$ is Eisenstein at any such prime ideal, giving $[L_p:K]=5$.
Since $K$ contains the fifth roots of unity, $L_p$ is the splitting field of $X^5-p$.
The assertions about the degree and Galois groups follow.

The relative field discriminant divides the discriminant of the order generated by $p^{1/5}$.
The polynomial discriminant is $5^5p^4$, so
\[
\Nm\mathfrak d_{L_p/K}\le5^{20}p^{16}.
\]
The nontrivial characters have the same kernel on every local unit filtration and hence have the same conductor $\mathfrak f$.
The conductor--discriminant formula \cite[Chapter~V, Theorem~3.27]{Milne} gives $\mathfrak d_{L_p/K}=\mathfrak f^4$, proving \eqref{eq:quintic-analytic-conductor} and the asserted support.
\end{proof}

\begin{lemma}\label{lem:quintic-exact-conductor}
Let $p\ne5$ be a rational prime satisfying $p^4\equiv1\pmod{25}$.
Then $L_p/K$ splits locally at the prime above $5$.
Every nontrivial Hecke character of $L_p/K$ has finite conductor
\begin{equation}\label{eq:quintic-exact-conductor}
\mathfrak f=p\mathcal O_K,\qquad \Nm\mathfrak f=p^4.
\end{equation}
Moreover,
\begin{equation}\label{eq:quintic-exact-discriminant}
|D_{L_p}|=5^{15}p^{16}.
\end{equation}
\end{lemma}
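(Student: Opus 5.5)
The plan is to work locally at each ramified prime of $L_p/K$ and then assemble the discriminant from the conductor--discriminant formula. By Lemma~\ref{lem:quintic-fields}, the conductor $\mathfrak f$ is common to the four nontrivial characters and is supported above $5p$. So it suffices to determine its local exponent at the unique prime $\mathfrak l=(1-\zeta_5)$ above $5$ and at each prime $\mathfrak p\mid p$.

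\emph{At $\mathfrak l$.} The completion $K_{\mathfrak l}=\mathbb Q_5(\zeta_5)$ is totally ramified of degree $4$, with uniformizer $\lambda=1-\zeta_5$ and $v_{\mathfrak l}(5)=4$. I will show that $p$ is a fifth power in $K_{\mathfrak l}$. The hypothesis $p^4\equiv1\pmod{25}$ means $p\equiv\zeta\pmod{25}$ for a fourth root of unity $\zeta$ in $\mathbb Z_5$. Since $\zeta=\zeta^{5\cdot (\text{inverse of }5\bmod 4)}$ is a fifth power in $\mathbb Z_5^\times$, it suffices to treat $u=p/\zeta\equiv1\pmod{25}$, so that $v_{\mathfrak l}(u-1)\ge8$.

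Next I will use the standard criterion that a unit $u\equiv1\pmod{\lambda^{t}}$ with $t>5e/(5-1)=5\cdot4/4\cdot\ldots$ is a fifth power. Explicitly, $1+x$ is a fifth power when $v_{\mathfrak l}(x)>5v_{\mathfrak l}(5)/4=5$; this follows from Hensel's lemma, or from convergence of the binomial series $(1+x)^{1/5}$ when $v(x)>v(5)+v(5)/4$. Here $v(x)\ge8>5$, so $u$, and hence $p$, is a fifth power in $K_{\mathfrak l}$. Thus $X^5-p$ splits completely over $K_{\mathfrak l}$, so $L_p/K$ splits locally at $\mathfrak l$ and the conductor exponent there is $0$.

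\emph{At $\mathfrak p\mid p$.} Since $p\ne5$, the prime $\mathfrak p$ is tame for the degree-$5$ extension, and it is totally ramified because $X^5-p$ is Eisenstein at $\mathfrak p$. A tamely ramified nontrivial character has conductor exponent exactly $1$. Hence $\mathfrak f=\prod_{\mathfrak p\mid p}\mathfrak p=p\mathcal O_K$, using that $p$ is unramified in $K$. This gives $\Nm\mathfrak f=p^4$.

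\emph{Discriminant.} The conductor--discriminant formula gives $\mathfrak d_{L_p/K}=\mathfrak f^4=p^4\mathcal O_K$, so $\Nm\mathfrak d_{L_p/K}=p^{16}$. The tower formula then gives
\[
|D_{L_p}|=|D_K|^{[L_p:K]}\,\Nm\mathfrak d_{L_p/K}=(5^3)^5p^{16}=5^{15}p^{16}.
\]

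The main obstacle is the $5$-adic step: the precise threshold for fifth powers in $\mathbb Q_5(\zeta_5)$, and the reduction from $p^4\equiv1\pmod{25}$ to a principal unit of high enough level. I would check this threshold carefully, using the Teichmüller decomposition and the logarithm/exponential isomorphism on $1+\lambda^{6}\mathcal O$, to make sure that congruence modulo $25=\lambda^8\cdot$unit suffices.
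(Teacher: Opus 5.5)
Your proposal is correct and follows the same route as the paper. It shows local splitting at the prime above $5$ because $p$ is a $5$-adic fifth power, gets conductor exponent one at each $\mathfrak p\mid p$ from tame total ramification of the Eisenstein polynomial $X^5-p$, and finishes with the conductor--discriminant and tower formulas.

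The only difference is in the fifth-root step. The paper obtains the fifth root directly in $\mathbb Z_5$ by applying Hensel's lemma to $((a+5T)^5-p)/25$ with $a^5\equiv p\pmod{25}$. You instead factor off the Teichm\"uller root and apply the principal-unit criterion in $\mathbb Q_5(\zeta_5)$. There, your binomial-series argument is what actually gives the threshold $v_{\mathfrak l}(x)>5$; naive Hensel starting from $1$ would need $v_{\mathfrak l}(x)>8$.
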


\begin{proof}
The four possible residues $p\pmod{25}$ are the fifth powers of $a=1,2,3,4$ modulo $25$.
Choose the corresponding $a$ and put
\[
g(T)=\dfrac{(a+5T)^5-p}{25}\in\mathbb Z_5[T].
\]
Modulo $5$, this polynomial is linear with nonzero coefficient $a^4$.
Hensel's lemma gives $t\in\mathbb Z_5$ with $g(t)=0$.
Thus $p$ is a fifth power in $\mathbb Q_5$.
The completion of $K$ above $5$ contains $\zeta_5$, so $X^5-p$ splits there and $L_p/K$ is locally trivial at that prime.

At every prime ideal $\mathfrak p\mid p\mathcal O_K$, the polynomial $X^5-p$ is Eisenstein.
The extension is totally ramified of degree five there.
Since $p\ne5$, this ramification is tame, and every nontrivial character has conductor exponent one.
There is no ramification away from $5p$ by the polynomial discriminant.
The local splitting at $5$ therefore gives \eqref{eq:quintic-exact-conductor}.
The conductor--discriminant formula yields
\[
\mathfrak d_{L_p/K}=(p\mathcal O_K)^4.
\]
As $|D_K|=125$, the tower formula gives
$|D_{L_p}|=125^5p^{16}=5^{15}p^{16}$.
\end{proof}

\subsection{An unconditional finite interval}

The two residue tests at $11$ and $31$ suffice for a short congruence argument.

\begin{lemma}\label{lem:quintic-small-primes}
Let $u$ be an odd positive integer with $u<1100$.
If
\[
u^4\equiv1\pmod{25},\qquad
u^2\equiv1\pmod{11},\qquad
u^6\equiv1\pmod{31},
\]
then $u\in\{1,243,749\}$.
None of these integers is prime.
\end{lemma}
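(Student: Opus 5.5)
The plan is a finite Chinese remainder computation. I will reduce everything to residue classes, list the few odd integers below $1100$ allowed by the moduli $25$ and $11$, and filter them by the modulus $31$.

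First I determine each solution set explicitly. The group $(\mathbb Z/25\mathbb Z)^\times$ is cyclic of order $20$, so $u^4\equiv1\pmod{25}$ has exactly four solutions. These are $u\equiv1,7,18,24\pmod{25}$; the first two are checked from $7^2=49\equiv-1$, and the other two are their negatives. Next, $u^2\equiv1\pmod{11}$ means $u\equiv\pm1\pmod{11}$. Finally, $(\mathbb Z/31\mathbb Z)^\times$ is cyclic of order $30$, so $u^6\equiv1\pmod{31}$ has exactly six solutions. Since $6^2\equiv5$, $5^3=125\equiv1$ and $6^3\equiv30\equiv-1$, the class of $6$ has order $6$. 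Its powers give the solution set $\{1,6,5,30,25,26\}\pmod{31}$.

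The modulus $2\cdot25\cdot11\cdot31=17050$ exceeds $1100$, so the statement is a genuine finite enumeration. The Chinese remainder theorem combines the first two conditions into $8$ classes modulo $275$. Each class meets $[1,1100)$ in exactly four integers, and exactly two of those are odd, since $275$ is odd. This leaves $16$ odd candidates. Their residues modulo $31$ are compared with the six allowed values, and this test should leave exactly $1$, $243$ and $749$.

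As a consistency check, the survivors satisfy all three conditions:
\begin{itemize}
\item For $243$: $243\equiv18\pmod{25}$, $243=22\cdot11+1$, and $243=7\cdot31+26$.
\item For $749$: $749\equiv24\pmod{25}$, $749=68\cdot11+1$, and $749=24\cdot31+5$.
\end{itemize}
The non-primality is immediate: $1$ is not prime, $243=3^5$, and $749=7\cdot107$.

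I expect no conceptual obstacle. The only real work is the bookkeeping in the enumeration, which must be presented transparently, for instance as a short table of the $16$ candidates with their residues modulo $31$. A cleaner alternative avoids the table. Starting from $u\equiv\pm1\pmod{11}$ and the four classes modulo $25$, I would write the $8$ classes modulo $275$ explicitly. For each, the two odd lifts below $1100$ are $r$ and $r+550$, where $r$ is the odd representative in $[1,550)$. Since $550\equiv23\pmod{31}$, the test modulo $31$ then reduces to checking $r$ and $r+23$ against the six allowed residues.
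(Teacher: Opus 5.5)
Your proposal is correct and is essentially the paper's proof: the paper takes exactly your ``cleaner alternative,'' listing the odd representatives $r\in\{1,43,199,243,307,351,507,549\}$ of the eight classes and using $550\equiv23\pmod{31}$ to test $r$ and $r+550$ against $H=\{1,5,6,25,26,30\}$. Note that your consistency check only verifies the converse direction, so you still need to display the exclusion of the other thirteen candidates; carrying out the table gives the residue pairs $(r\bmod31,\,(r+550)\bmod31)$ equal to $(1,24)$, $(12,4)$, $(13,5)$, $(26,18)$, $(28,20)$, $(10,2)$, $(11,3)$, $(22,14)$, so only $1$, $749$ and $243$ survive.
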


\begin{proof}
The roots modulo $25$ are $1,7,18,24$, and the roots modulo $11$ are $1,-1$.
The Chinese remainder theorem, including oddness, gives exactly the eight classes
\[
u=r+550t,\qquad
r\in\{1,43,199,243,307,351,507,549\}.
\]
Since $0<u<1100$, one has $t\in\{0,1\}$.
The six roots of $X^6-1$ modulo $31$ are
\[
H=\{1,5,6,25,26,30\}.
\]
Each displayed residue is a root, and the degree bound over the field $\mathbb F_{31}$ proves exhaustiveness.
As $550\equiv23\pmod{31}$, Table~\ref{tab:quintic-small-primes} checks both values of $t$.
The only survivors are $1$, $243=3^5$, and $749=7\cdot107$.
\end{proof}

\begin{table}[ht]
\centering
\caption{The eight congruence classes below $1100$. The last column retains residues in $H$.}
\label{tab:quintic-small-primes}
\begin{tabular}{rrrr}
\toprule
$r$ & $r\bmod31$ & $(r+550)\bmod31$ & Allowed $u$\\
\midrule
1   & 1  & 24 & 1\\
43  & 12 & 4  & ---\\
199 & 13 & 5  & 749\\
243 & 26 & 18 & 243\\
307 & 28 & 20 & ---\\
351 & 10 & 2  & ---\\
507 & 11 & 3  & ---\\
549 & 22 & 14 & ---\\
\bottomrule
\end{tabular}
\end{table}

\begin{proposition}\label{prop:quintic-finite}
If $n\ge8$, $M\ge2$, and $|n!-M^5|\le n$, then the displacement is a prime $p\ge1100$ with $n/2<p\le n$.
\end{proposition}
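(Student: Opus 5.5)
The plan is to split by the size of $n$ and then run the congruence sieve of Lemma~\ref{lem:quintic-small-primes} against the residue conditions of Lemma~\ref{lem:quintic-residue-conditions}.

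\emph{Range $8\le n\le 10$.} Theorem~\ref{thm:quintic-elementary-range} (with $k=5$) already shows that $|n!-M^5|>n$, so the hypothesis cannot hold and there is nothing to prove. The same theorem in fact rules out every $n\le100$, but the argument below only needs $n\ge11$ to apply Lemma~\ref{lem:quintic-residue-conditions}.

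\emph{Range $n\ge11$.} Lemma~\ref{lem:quintic-residue-conditions} gives $M^5=n!+\varepsilon p$ with $p\ne5$ prime, $n/2<p\le n$, and $p^4\equiv1\pmod{25}$. Suppose, for contradiction, that $p<1100$. Then $p\ge7$ is odd, and the remaining task is to show that $p$ satisfies the other two hypotheses of Lemma~\ref{lem:quintic-small-primes}. That lemma then forces $p\in\{1,243,749\}$, none of which is prime, which is the desired contradiction.

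\emph{Condition at $q=11$.} Since $n\ge11$ and $11\equiv1\pmod5$, \eqref{eq:quintic-residue} with $q=11$ gives $p^2\equiv1\pmod{11}$, provided $p\ne11$. If $p=11$, then $p^4\equiv 14641\equiv 16\pmod{25}$, so $p^4\not\equiv1\pmod{25}$ and this case is already excluded by \eqref{eq:quintic-mod25}.

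\emph{Condition at $q=31$.} Here I would split according to whether $31\le n$.
\begin{itemize}
\item If $n\ge31$ and $p\ne31$, then \eqref{eq:quintic-residue} with $q=31$ gives $p^6\equiv1\pmod{31}$. If $p=31$, then $31^4\equiv 6^4=1296\equiv 21\pmod{25}$, so $p=31$ fails \eqref{eq:quintic-mod25}.
\item If $n\le30$, then $p\le30$ and I cannot test at $31$. Instead I would list directly the primes $7\le p\le30$ with $p^4\equiv1\pmod{25}$, that is, $p\equiv1,7,18,24\pmod{25}$. The only such prime is $p=7$. It fails $p^2\equiv1\pmod{11}$, since $49\equiv5\pmod{11}$, and that condition is available because $n\ge11$. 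This matches the mod~$11$ argument in Theorem~\ref{thm:quintic-elementary-range}.
\end{itemize}

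With both conditions established, Lemma~\ref{lem:quintic-small-primes} applies, so $p\ge1100$, which is the claim.

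\emph{Main obstacle.} The hard part is the bookkeeping around the test primes. Lemma~\ref{lem:quintic-residue-conditions} requires both $q\le n$ and $q\ne p$, so each test prime needs two side checks: a small-$n$ case where the test is unavailable, and the case where $p$ equals the test prime. I expect both to reduce to the mod-$25$ sieve, as shown above, leaving only one or two explicit residue computations.
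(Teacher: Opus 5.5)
Your proof is correct and follows essentially the paper's route: Lemma~\ref{lem:quintic-residue-conditions} supplies the congruences modulo $25$, $11$ and $31$, and Lemma~\ref{lem:quintic-small-primes} then rules out $p<1100$. The only difference is that the paper invokes Theorem~\ref{thm:quintic-elementary-range} for all of $8\le n\le100$, so $n\ge101$ and $p>50$; this makes $11,31<p\le n$ automatic and removes your side checks for $p\in\{11,31\}$ and $n\le30$, which are nonetheless all correct.
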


\begin{proof}
Theorem~\ref{thm:quintic-elementary-range} gives $n\ge101$.
Lemma~\ref{lem:quintic-residue-conditions} then gives \eqref{eq:quintic-shift} and $p>50$.
Both $11$ and $31$ are smaller than $p$ and at most $n$.
Thus \eqref{eq:quintic-mod25} and \eqref{eq:quintic-residue} give the three congruences of Lemma~\ref{lem:quintic-small-primes}.
That lemma excludes $p<1100$.
\end{proof}

\begin{corollary}\label{cor:quintic-finite}
For every $8\le n\le1100$, every $m\ge2$, and every positive exponent $k$ divisible by $5$,
\[
|n!-m^k|>n.
\]
\end{corollary}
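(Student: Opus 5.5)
It suffices to treat $k=5$: if $5\mid k$ and $m\ge2$, then $m^k=M^5$ with $M=m^{k/5}\ge2$. So the claim reduces to $|n!-M^5|>n$ for $8\le n\le1100$ and every $M\ge2$.

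Suppose instead that $|n!-M^5|\le n$ for some $n$ in this range and some $M\ge2$. Proposition~\ref{prop:quintic-finite} says the displacement must then be a prime $p$ with $n/2<p\le n$ and $p\ge1100$. Since $p\le n\le1100$, this forces $p=n=1100$. But $1100=2^2\cdot5^2\cdot11$ is not prime, which is a contradiction. So no failure occurs on $8\le n\le1100$.

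The only step needing care is this endpoint. Proposition~\ref{prop:quintic-finite} only gives $p\ge1100$, not $p>1100$, so we must check separately that $p=n=1100$ is impossible; the factorization above does this. Everything else follows directly from Theorem~\ref{thm:quintic-elementary-range} and Lemma~\ref{lem:quintic-small-primes} through that proposition.
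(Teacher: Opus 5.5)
Your proof is correct and follows essentially the paper's own argument. You reduce to $k=5$ via $M=m^{k/5}$ and apply Proposition~\ref{prop:quintic-finite} to get a displacement prime with $1100\le p\le n\le1100$, which is impossible because $1100$ is composite.
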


\begin{proof}
Write $m^k=(m^{k/5})^5$ and apply Proposition~\ref{prop:quintic-finite}.
A displacement prime would satisfy $1100\le p\le n\le1100$, forcing the composite value $p=1100$.
\end{proof}

\subsection{Unconditional analytic separation for large indices}\label{sec:quintic-analytic}

We now obtain a detecting prime without GRH, at the cost of an uncomputed effective threshold.

All ideals below are nonzero integral ideals of $\mathcal O_K$.
Write
\[
\kappa=\operatorname{Res}_{s=1}\zeta_K(s)>0.
\]
The prime above $5$ has norm $5$.

We use the following standard unconditional estimates without reproving them.
The first two follow from \cite[Theorems~1 and~5]{GarciaLee}; the convexity estimate is the specialization of \cite[Lemma~3.1]{Zaman} to degree four and $\delta=1/100$.

\begin{lemma}\label{lem:quintic-analytic-inputs}
For this fixed field $K$, with effective implied constants,
\begin{align}
A_K(t)&:=\#\{\mathfrak a:\Nm\mathfrak a\le t\}
=\kappa t+O_K(t^{3/5}),\label{eq:quintic-analytic-ideal-count}\\
\sum_{\Nm\mathfrak q\le t}\dfrac1{\Nm\mathfrak q}
&=\log\log t+M_K+O_K(1/\log t),\qquad t\ge2,\label{eq:quintic-analytic-mertens}
\end{align}
where $\mathfrak q$ runs over prime ideals.
If $\psi$ is a primitive nonprincipal finite-order Hecke character with conductor $\mathfrak f$, then
\begin{equation}\label{eq:quintic-analytic-convexity}
L(1/2+it,\psi)
\ll_K (\Nm\mathfrak f)^{51/200}(2+|t|)^{51/50}.
\end{equation}
The same upper bound holds uniformly for $1/2\le\operatorname{Re}s\le2$, with $t=\operatorname{Im}s$.
\end{lemma}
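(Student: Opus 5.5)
The plan is to obtain all three estimates by specializing general results for a fixed number field and for Hecke $L$-functions over it to $K=\mathbb Q(\zeta_5)$. This field has degree $n_K=4$, signature $(0,2)$ and $|D_K|=5^3$, so every field-dependent quantity is an absolute constant.

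For \eqref{eq:quintic-analytic-ideal-count}, I would use the classical Landau-type ideal-counting theorem with error exponent $1-2/(n_K+1)$, in the explicit form of \cite[Theorem~1]{GarciaLee}. With $n_K=4$ this exponent is $3/5$. The main term is $\kappa t$ by the analytic class number formula, so $\kappa>0$. The implied constant depends only on $n_K$, $D_K$ and the regulator and class number of $K$, and is therefore effective. If I proved it directly, I would write $\zeta_K(s)=\sum_{\mathfrak a}\Nm\mathfrak a^{-s}$ and use a truncated Perron formula. I would shift the contour to $\operatorname{Re}s=-\delta$, bound $\zeta_K$ there using the functional equation and Phragmén--Lindelöf, and then balance the truncation height against the contour integral. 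The exponent $1-2/(n_K+1)$ comes from exactly this balancing.

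For \eqref{eq:quintic-analytic-mertens}, I would cite the explicit Mertens theorem for number fields \cite[Theorem~5]{GarciaLee}. Alternatively, I would derive it from \eqref{eq:quintic-analytic-ideal-count} in three steps:
\begin{enumerate}
\item Apply partial summation to $\sum_{\Nm\mathfrak a\le t}\Lambda_K(\mathfrak a)/\Nm\mathfrak a=\log t+O_K(1)$. This Mertens first theorem follows from the ideal count via $\log\Nm\mathfrak a=\sum_{\mathfrak b\mid\mathfrak a}\Lambda_K(\mathfrak b)$.
\item Integrate against $1/(u\log u)$ to pass from $\Lambda_K(\mathfrak q)/\Nm\mathfrak q$ to $1/\Nm\mathfrak q$.
\item Absorb prime powers, and the primes of degree $\ge2$, into a convergent constant.
\end{enumerate}
This gives the constant $M_K$ and the error $O_K(1/\log t)$.

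For \eqref{eq:quintic-analytic-convexity}, I would apply \cite[Lemma~3.1]{Zaman}, which is Rademacher's explicit Phragmén--Lindelöf convexity bound for Hecke $L$-functions. It gives
\[
L(s,\psi)\ll_{\delta,n_K}\bigl(|D_K|\Nm\mathfrak f\,(2+|t|)^{n_K}\bigr)^{(1+\delta-\sigma)/2}
\]
for $-\delta\le\sigma\le1+\delta$. I would set $\delta=1/100$ and $\sigma=1/2$. The exponent is then $(1/2+1/100)/2=51/200$ on $\Nm\mathfrak f$, and $4\cdot51/200=51/50$ on $(2+|t|)$. The factor $|D_K|^{51/200}$ is absorbed into $\ll_K$. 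For $1/2\le\sigma\le1+\delta$ the same lemma gives a smaller exponent, so the stated bound persists. For $1+\delta\le\sigma\le2$, absolute convergence gives $|L(s,\psi)|\le\zeta_K(1+\delta)\ll_K1$. Together these cover the strip $1/2\le\operatorname{Re}s\le2$ uniformly.

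The step needing the most care is matching hypotheses in the convexity bound. I must check that the character is primitive and nonprincipal, so that there is no pole at $s=1$ and no extra factor of the form $(s-1)/(s+1)$. I must also check that the gamma-factor normalization in \cite{Zaman} uses the analytic conductor in the form above, so that the exponent on $\Nm\mathfrak f$ is exactly $(1+\delta-\sigma)/2$ and not $(1+\delta-\sigma)/2$ times an extra degree factor. If the normalization differs, I would rederive the bound directly. That rederivation interpolates between $|L(1+\delta+it)|\ll1$ and the functional-equation bound at $\sigma=-\delta$, which has size $(|D_K|\Nm\mathfrak f(2+|t|)^4)^{1/2+\delta}$. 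The linear interpolation of the exponent at $\sigma=1/2$ gives the same $51/200$.
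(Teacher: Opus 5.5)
Your proposal is correct and follows the paper's route: the paper also cites Garcia--Lee (Theorems~1 and~5) for the ideal count and the Mertens estimate, and specializes Zaman's Lemma~3.1 (Rademacher's convexity bound) to degree four with $\delta=1/100$ and $\sigma=1/2$, which gives the exponents $51/200$ and $51/50$. Like you, it then uses $|L(s,\psi)|\le\zeta_K(1+\delta)$ on $1+\delta\le\operatorname{Re}s\le2$.

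One small correction to your optional aside on proving the ideal count directly. A sharp-cutoff truncated Perron argument balanced as you describe yields only $t^{1-2/(n_K+2)+\epsilon}$, here $t^{2/3+\epsilon}$. Landau's exponent $1-2/(n_K+1)=3/5$ needs a smoothed (Riesz-mean) version with differencing. This does not affect your main argument, which rests on the citation.
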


The cited convexity estimate covers $1/2\le\operatorname{Re}s\le1+\delta$, with $\delta=1/100$.
On $1+\delta\le\operatorname{Re}s\le2$, absolute convergence gives $|L(s,\psi)|\le\zeta_K(1+\delta)$, completing the stated strip bound.

In \eqref{eq:quintic-analytic-ideal-count}, it is enough to take $t\ge1$.
The fixed positive residue $\kappa$ and all constants used here are effective.
We also use $\pi_K(t)\ll_Kt/\log t$, which follows from the rational-prime Chebyshev bound and the fact that at most four prime ideals lie above any rational prime.

Define the nonnegative compactly supported weight
\[
w(t)=\begin{cases}(1-t)^2,&0\le t\le1,\\0,&t>1.\end{cases}
\]
Its Mellin transform on $\operatorname{Re}s>0$ is
\begin{equation}\label{eq:quintic-analytic-mellin}
\widehat w(s)=\int_0^1(1-t)^2t^{s-1}\,dt
=\dfrac2{s(s+1)(s+2)}.
\end{equation}
For ideals coprime to $5$, partial summation in \eqref{eq:quintic-analytic-ideal-count} gives
\begin{equation}\label{eq:quintic-analytic-B}
B(T):=\sum_{(\mathfrak a,5)=1}w(\Nm\mathfrak a/T)
=\dfrac c3T+O_K(T^{3/5}),\qquad T\ge1,
\end{equation}
where $c=(1-1/5)\kappa>0$.
Indeed, the unweighted count is $A_K(T)-A_K(T/5)=cT+O_K(T^{3/5})$, and $\int_0^1w(t)\,dt=1/3$.
In particular, $B(T)\ll_KT$ for $T\ge1$.

\begin{theorem}\label{thm:quintic-analytic-detector}
There is an effectively computable constant $P_0>5$ such that every rational prime $p\ge P_0$ admits a rational prime $q$ satisfying
\begin{equation}\label{eq:quintic-analytic-detector}
q\le p^{19/20},\qquad q\equiv1\pmod5,\qquad
p^{(q-1)/5}\not\equiv1\pmod q.
\end{equation}
\end{theorem}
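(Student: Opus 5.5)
Argue by contradiction: fix a large prime $p$ and put $y=p^{19/20}$, and suppose every rational prime $q\le y$ with $q\equiv1\pmod5$ satisfies $p^{(q-1)/5}\equiv1\pmod q$. Translate this into the field $K$. Fix a nontrivial Hecke character $\psi$ of $L_p/K$, given by the quintic power-residue symbol $\psi(\mathfrak a)=\bigl(p/\mathfrak a\bigr)_5$ on ideals coprime to $5p$. Its conductor $\mathfrak f$ is supported above $5p$, and Lemma~\ref{lem:quintic-fields} gives $\Nm\mathfrak f\le5^5p^4$.

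Next check which prime ideals $\mathfrak q$ of $K$ with $\Nm\mathfrak q\le y$, coprime to $5p$, can have $\psi(\mathfrak q)\ne1$.
\begin{itemize}
\item If $\Nm\mathfrak q=q\equiv1\pmod5$, then $\psi(\mathfrak q)\equiv p^{(q-1)/5}$, which is $1$ by hypothesis.
\item If $\Nm\mathfrak q=q^f$ with $f\ge2$, then $\psi(\mathfrak q)\equiv p^{(q^f-1)/5}$. Here $5\mid(q^f-1)/(q-1)$ once $f\in\{2,4\}$ is the order of $q$ modulo $5$. Since $p^{q-1}\equiv1\pmod q$, the symbol is $1$.
\end{itemize}
Hence $\psi(\mathfrak a)=1$ for every $y$-smooth ideal $\mathfrak a$ coprime to $5p$. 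Ideals of norm at most $p^{4}$ with $\Nm\mathfrak a<p$ are automatically coprime to $p$.

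Now compare two weighted sums at height $T=p^{1+\eta}$ for a small fixed $\eta>0$. Take $T$ slightly above the conductor-square-root scale $p^2$; the precise value is fixed below.
\begin{itemize}
\item \emph{Character sum.} Bound
\[
S_\psi(T)=\sum_{(\mathfrak a,5)=1}\psi(\mathfrak a)w(\Nm\mathfrak a/T)
=\dfrac1{2\pi i}\int_{(2)}L(s,\psi)\widehat w(s)T^s\,ds.
\]
The Euler factor at $5$ is removed, and the primitive/imprimitive discrepancy at $p$ is handled similarly. Shift the contour to $\operatorname{Re}s=1/2$; there is no pole, since $\psi$ is nonprincipal. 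By \eqref{eq:quintic-analytic-convexity} and the decay $|\widehat w(s)|\ll|s|^{-3}$, which makes the $t$-integral converge, this gives $S_\psi(T)\ll_K p^{4\cdot51/200}T^{1/2}=p^{1.02}T^{1/2}$.
\item \emph{Main term.} Write $\sum_{j=0}^4 S_{\psi^j}(T)=5\sum_{\psi(\mathfrak a)=1}w$. Then \eqref{eq:quintic-analytic-B} and the bound above give
\[
\sum_{\psi(\mathfrak a)=1}w(\Nm\mathfrak a/T)=\dfrac{c}{15}T+O(T^{3/5}+p^{1.02}T^{1/2}),
\]
which is too small to contain the full mass $B(T)$ once $T\ge p^{2.04+\eta}$.
\item \emph{Smooth ideals.} Show that ideals in the weighted range that are not $y$-smooth carry only a proportion bounded away from $4/5$ of $B(T)$. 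An ideal of norm $\le T$ with a prime factor $\mathfrak q$ with $\Nm\mathfrak q>y$ is $\mathfrak q\mathfrak b$ with $\Nm\mathfrak b\le T/\Nm\mathfrak q$. Summing \eqref{eq:quintic-analytic-ideal-count} over $\mathfrak b$ and \eqref{eq:quintic-analytic-mertens} over $\mathfrak q$ bounds this mass by roughly $\frac c3T\log\frac{\log T}{\log y}$, for prime factors of norm up to $T$. Ideals with several large factors are handled by the same Mertens estimate, or simply bounded as above; the larger Mertens terms would enlarge the loss.
\end{itemize}

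With $T=p^{2.05}$ and $y=p^{0.95}$ the ratio $\log T/\log y\approx2.16$, and the log-loss is about $0.77$, exceeding $4/5$ only marginally if at all. This is the expected main obstacle: the exponent $19/20$ must be checked against the numerology. My first fix would be to exploit that every non-smooth ideal $\mathfrak a$ with a single large prime $\mathfrak q$ and $\psi(\mathfrak a)=1$ forces $\psi(\mathfrak q)=1$ (since $\psi(\mathfrak b)=1$), so large primes contribute only through $\psi$-values. Then run the argument with $\mathfrak b$ ranging over smooth ideals ($\psi=1$) and use the character-sum bound twice, via a Dirichlet hyperbola or Vinogradov-type trick over $\mathfrak q$ in $(y,T]$. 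This should show that the mass with $\psi=1$ is at least $(1-\log\frac{\log T}{\log y})\frac c3T$ plus a positive multiple, contradicting the $\frac{c}{15}T$ main term for large $p$.

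All constants are effective, so $P_0$ is effective. Finally, a detecting ideal of degree one lying over $q\le y$ yields the rational prime $q\equiv1\pmod5$ with $p^{(q-1)/5}\not\equiv1\pmod q$, which is \eqref{eq:quintic-analytic-detector}.
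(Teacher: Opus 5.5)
Your main line is the paper's proof, with the same weight $w$ and the same parameters $X=p^{41/20}$, $Y=p^{19/20}$. The orthogonality identity gives a kernel mass $\frac{c}{15}X$, with character error $p^{51/50}X^{1/2}$ coming from the convexity bound and $\Nm\mathfrak f\le5^5p^4$. The union bound then gives the lower bound $\frac c3X\bigl(1-\log\frac{\log X}{\log Y}\bigr)-o(X)$ for smooth ideals.

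The one thing that goes wrong is how you read the final numerology, which you call the ``main obstacle''. The condition needed is that the nonsmooth proportion $\log(41/19)$ lie strictly \emph{below} $4/5$. It does: $\log(41/19)\approx0.769<0.8$, so your own figure $0.77$ already settles it. The smooth mass is therefore at least about $\frac c3(0.231)X\approx0.077\,cX$. This exceeds $\frac c{15}X\approx0.067\,cX$ by a fixed positive multiple of $X$. All error terms are $O_K(X/\log p+Xp^{-1/200})$, so the contradiction holds for all large $p$ with no further idea. The paper certifies the inequality rationally via $e^{4/5}>1+\frac45+\frac{(4/5)^2}{2}+\frac{(4/5)^3}{6}=\frac{827}{375}>\frac{41}{19}$. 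You should drop the proposed ``fix'' using a hyperbola or Vinogradov-type treatment of the large primes. It is unnecessary, and as sketched it is not an argument you could rely on.

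Two small repairs are also needed. Your opening choice $T=p^{1+\eta}$ conflicts with the requirement $T\ge p^{2.04+\eta}$ that you derive later; at $T=p^{1+\eta}$ the character error exceeds the main term. State $T=p^{41/20}$ from the start. Second, apply the orthogonality identity to ideals coprime to $5p$. Comparing with $B(T)$ then costs an extra $O_K(T/p)$, which is harmless.
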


\begin{proof}
Fix a rational prime $p\ne5$.
By Lemma~\ref{lem:quintic-fields}, the extension $L_p/K$ is cyclic of degree five and has a faithful Hecke character $\chi$ of order five.
Its nontrivial powers have the common conductor bounded in \eqref{eq:quintic-analytic-conductor}.
No residue condition on $p$ modulo $25$ is needed for this argument.

For $1\le j\le4$, let $\chi_j$ agree with $\chi^j$ on ideals coprime to $5p$ and vanish on all other ideals.
Its $L$-function is that of the associated primitive character with some Euler factors above $5p$ deleted.
There are at most five such prime ideals.
On $\operatorname{Re}s=1/2$, their additional factors are bounded uniformly in $p$.

For $X\ge1$, set
\[
S_j(X)=\sum_{\mathfrak a}\chi_j(\mathfrak a)w(\Nm\mathfrak a/X).
\]
Mellin inversion and \eqref{eq:quintic-analytic-mellin} give
\[
S_j(X)=\dfrac1{2\pi i}\int_{(2)}
L(s,\chi_j)X^s\dfrac2{s(s+1)(s+2)}\,ds.
\]
Shift the line to $\operatorname{Re}s=1/2$.
No pole is crossed, since $\chi^j$ is nonprincipal.
The strip bound in Lemma~\ref{lem:quintic-analytic-inputs} makes the horizontal integrals tend to zero.
Equations~\eqref{eq:quintic-analytic-convexity} and \eqref{eq:quintic-analytic-conductor} imply
\begin{equation}\label{eq:quintic-analytic-twisted}
S_j(X)\ll_Kp^{51/50}X^{1/2}.
\end{equation}
The integral converges absolutely on that line, because the integrand has decay $O((1+|t|)^{-99/50})$ apart from its constant factors.

Let
\[
T_p(X)=\sum_{\substack{(\mathfrak a,5p)=1\\\chi(\mathfrak a)=1}}
w(\Nm\mathfrak a/X).
\]
Character orthogonality yields
\[
T_p(X)=\dfrac15\left(
\sum_{(\mathfrak a,5p)=1}w(\Nm\mathfrak a/X)
+\sum_{j=1}^4S_j(X)\right).
\]
Removing ideals divisible by a prime above $p$ from \eqref{eq:quintic-analytic-B} costs $O_K(X/p)$, since those prime ideals have norm at least $p$.
It follows that
\begin{equation}\label{eq:quintic-analytic-orthogonality}
T_p(X)=\dfrac c{15}X+
O_K\bigl(X^{3/5}+X/p+p^{51/50}X^{1/2}\bigr).
\end{equation}

An ideal is $Y$-smooth if each of its prime ideal factors has norm at most $Y$.
Let
\[
\Psi(X,Y)=\sum_{\substack{(\mathfrak a,5)=1\\\mathfrak a\;Y\text{-smooth}}}
w(\Nm\mathfrak a/X).
\]
Assume $X\ge Y>5$.
Every nonsmooth ideal counted in $B(X)$ has a prime ideal factor $\mathfrak q$ with $Y<\Nm\mathfrak q\le X$.
A union bound therefore gives the exact inequality
\begin{equation}\label{eq:quintic-analytic-union}
\Psi(X,Y)\ge B(X)-
\sum_{Y<\Nm\mathfrak q\le X}B(X/\Nm\mathfrak q).
\end{equation}
Repeated counting on the right only increases the subtracted quantity.
By \eqref{eq:quintic-analytic-B},
\[
\sum_{Y<\Nm\mathfrak q\le X}B(X/\Nm\mathfrak q)
=\dfrac c3X\sum_{Y<\Nm\mathfrak q\le X}\dfrac1{\Nm\mathfrak q}
+O_K\left(X^{3/5}\sum_{Y<\Nm\mathfrak q\le X}
(\Nm\mathfrak q)^{-3/5}\right).
\]
Partial summation with $\pi_K(t)\ll_Kt/\log t$ gives
\[
\sum_{Y<\Nm\mathfrak q\le X}(\Nm\mathfrak q)^{-3/5}
\ll_K\dfrac{X^{2/5}}{\log Y}.
\]
Combining this with \eqref{eq:quintic-analytic-mertens} proves
\begin{equation}\label{eq:quintic-analytic-smooth-lower}
\Psi(X,Y)\ge\dfrac c3X
\left(1-\log\dfrac{\log X}{\log Y}\right)
-O_K\bigl(X/\log Y+X^{3/5}\bigr).
\end{equation}
No asymptotic theorem for smooth ideals is needed.

To make the character error smaller than the main term, we require $X/p^{51/25}\to\infty$.
To make the smooth-ideal lower bound exceed the character-kernel main term, we require $\log X/\log Y<e^{4/5}$.
Both conditions, together with $Y<p$, hold for
\[
X=p^{41/20},\qquad Y=p^{19/20}.
\]
Since $p^{51/50}X^{1/2}=Xp^{-1/200}$, equation~\eqref{eq:quintic-analytic-orthogonality} becomes
\[
\dfrac{T_p(X)}X=\dfrac c{15}
+O_K\bigl(p^{-41/50}+p^{-1}+p^{-1/200}\bigr).
\]
Suppose every prime ideal $\mathfrak q\nmid5p$ with $\Nm\mathfrak q\le Y$ has $\chi(\mathfrak q)=1$.
Every $Y$-smooth ideal coprime to $5$ is also coprime to $p$, because $Y<p$.
Its character is one, so $\Psi(X,Y)\le T_p(X)$.
But \eqref{eq:quintic-analytic-smooth-lower} gives
\[
\dfrac{\Psi(X,Y)}X\ge\dfrac c3\bigl(1-\log(41/19)\bigr)
-O_K\bigl(1/\log p+p^{-41/50}\bigr).
\]
The leading coefficient exceeds $c/15$, since
\[
\log(41/19)<\dfrac45.
\]
For example, the exponential series gives
$e^{4/5}>1+4/5+(4/5)^2/2+(4/5)^3/6=827/375>41/19$.
This is a contradiction for all sufficiently large $p$.
More explicitly, the false hypothesis would force
\[
\dfrac c3\left(\dfrac45-\log(41/19)\right)
\ll_K \dfrac1{\log p}+p^{-1/200}+p^{-1}+p^{-41/50}.
\]
All implied constants depend only on the fixed field $K$ and are effective.
The left side is a fixed positive number, and the right side tends effectively to zero.
Thus a detecting prime ideal $\mathfrak q\nmid5p$ of norm at most $Y$ exists once $p$ exceeds an effective constant.

Let $q$ be its underlying rational prime.
If $q\not\equiv1\pmod5$, fifth powering is a bijection on $\mathbb F_q^\times$.
The rational residue of $p$ would then already have a fifth root in $\mathbb F_q$, hence also in $\mathcal O_K/\mathfrak q$.
Its Kummer Frobenius would be trivial, contrary to $\chi(\mathfrak q)\ne1$.
Therefore $q\equiv1\pmod5$.
Such a prime splits completely in $K$, so $\Nm\mathfrak q=q$.
The nontrivial Kummer character says precisely that $p$ is not a fifth power modulo $q$.
This proves the theorem.
\end{proof}

\begin{theorem}\label{thm:quintic-eventual}
There is an effectively computable integer $N_5\ge1101$ such that
\[
|n!-m^k|>n
\]
for every integer $n\ge N_5$ and all integers $m,k\ge2$ with $5\mid k$.
The threshold is uniform in both $m$ and $k$.
\end{theorem}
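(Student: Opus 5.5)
The plan is to reduce to $k=5$ and combine the residue conditions of Lemma~\ref{lem:quintic-residue-conditions} with the detecting prime of Theorem~\ref{thm:quintic-analytic-detector}. Writing $m^k=(m^{k/5})^5$ with $M=m^{k/5}\ge2$, it suffices to prove $|n!-M^5|>n$ for all $n\ge N_5$ and all $M\ge2$. This reduction is already uniform in $m$ and $k$, since the threshold will depend only on the field $K$. Let $P_0$ be the effective constant of Theorem~\ref{thm:quintic-analytic-detector}, and set
\[
N_5=\max\{1101,\ 2P_0\}.
\]
This is effectively computable and independent of $m,k$.

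Suppose $n\ge N_5$ and $|n!-M^5|\le n$. By Proposition~\ref{prop:quintic-finite}, or directly by Lemma~\ref{lem:quintic-residue-conditions} since $n\ge11$, the displacement is a prime $p\ne5$ with $n/2<p\le n$. Moreover, every prime $q\le n$ with $q\ne p$ and $q\equiv1\pmod5$ satisfies $p^{(q-1)/5}\equiv1\pmod q$. Because $p>n/2\ge N_5/2\ge P_0$, Theorem~\ref{thm:quintic-analytic-detector} supplies a prime $q\equiv1\pmod5$ with $q\le p^{19/20}$ and $p^{(q-1)/5}\not\equiv1\pmod q$.

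It remains to check that this $q$ is admissible in \eqref{eq:quintic-residue}. Since $p>P_0>5$, we have $q\le p^{19/20}<p\le n$. Hence $q\ne p$ and $q\le n$, and \eqref{eq:quintic-residue} forces $p^{(q-1)/5}\equiv1\pmod q$. This contradicts the choice of $q$, and the theorem follows.

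There is no genuine obstacle here, because all the analytic work sits in Theorem~\ref{thm:quintic-analytic-detector}. The one point needing care is the passage from the lower bound on $n$ to a lower bound on $p$, through $p>n/2$. This is why the threshold is taken as $2P_0$ rather than $P_0$. The bound $N_5\ge1101$ is imposed only so that the statement matches Theorem~\ref{thm:main-unconditional}. Since $P_0$ is effective but not computed, $N_5$ is effective but uncomputed, consistent with the remark that no overlap with Corollary~\ref{cor:quintic-finite} is established.
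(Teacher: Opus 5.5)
Your proposal is correct and follows the paper's proof step for step. The paper makes the same choice $N_5\ge\max\{1101,2P_0\}$ and obtains the prime displacement $n/2<p\le n$ with $p>P_0$ from Lemma~\ref{lem:quintic-residue-conditions}. It then reaches the same contradiction from the detecting prime $q\le p^{19/20}<p\le n$ of Theorem~\ref{thm:quintic-analytic-detector}, and closes with the same reduction $m^k=(m^{k/5})^5$. The only cosmetic point is that $P_0$ need not be an integer, so $N_5$ should be any integer at least $\max\{1101,2P_0\}$, for instance its ceiling.
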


\begin{proof}
First take $k=5$.
Choose an integer $N_5\ge\max\{1101,2P_0\}$, where $P_0$ is the effective constant in Theorem~\ref{thm:quintic-analytic-detector}.
If a counterexample existed with $n\ge N_5$, Lemma~\ref{lem:quintic-residue-conditions} would give a prime displacement $n/2<p\le n$ and the necessary congruence \eqref{eq:quintic-residue}.
Since $p>P_0$, Theorem~\ref{thm:quintic-analytic-detector} supplies a prime
\[
q\le p^{19/20}<p\le n,\qquad q\equiv1\pmod5,
\]
violating that congruence.
This proves the assertion for fifth powers.
For $5\mid k$, apply the fifth-power assertion to $m^k=(m^{k/5})^5$.
\end{proof}

No numerical value of $P_0$ or $N_5$ is obtained.
Combined with Corollary~\ref{cor:quintic-finite} and Theorem~\ref{thm:bounded-base}, the result restricts any remaining fifth-power counterexample to
\[
1100<n<N_5,\qquad P^+(m)>n+2.
\]
The elementary initial-interval theorem remains independent of this analytic argument.

\subsection{Completion under GRH}\label{sec:quintic-erh-completion}

We now use the fifth-power GRH hypothesis stated before Theorem~\ref{thm:main-quintic}.
The conditional input is a specialization of the explicit-formula argument of Wesolowski \cite[Section~3]{Wesolowski}.
The exact conductor in Lemma~\ref{lem:quintic-exact-conductor} is essential for the threshold below.

We first record the elementary bounds for the scalar constants.
For rational $v>1$, put $z=(v-1)/(v+1)$ and
\[
A_J(v)=2\sum_{j=0}^{J}\dfrac{z^{2j+1}}{2j+1},\qquad
B_J(v)=A_J(v)+\dfrac{2z^{2J+3}}{(2J+3)(1-z^2)}.
\]
The integrated geometric series gives $A_J(v)<\log v<B_J(v)$.
Writing $H_{20}=\sum_{j=1}^{20}1/j$, direct rational substitution gives
\begin{equation}\label{eq:quintic-scalar-bounds}
\begin{gathered}
A_3(2)>\dfrac{693}{1000},\qquad
B_5(5)<\dfrac{161}{100},\qquad
A_5(157/50)>\dfrac{1144}{1000},\\
H_{20}-B_{50}(20)-\dfrac1{40}>\dfrac{577}{1000},\\
10B_5(2)+B_2(275/256)<\dfrac{701}{100}.
\end{gathered}
\end{equation}
The classical bound $\pi>157/50$ gives $\log\pi>1.144$.
The trapezoidal inequality for $1/t$ gives
$\gamma>H_{20}-\log20-1/40$, where $\gamma$ is Euler's constant.
Consequently, \eqref{eq:quintic-scalar-bounds} implies
\begin{equation}\label{eq:quintic-numerical-inputs}
\begin{gathered}
\log5<1.61,\qquad \gamma>0.577,\qquad
\log2>0.693,\qquad \log\pi>1.144,\\
\log1100<7.01,\qquad \sqrt{1100}>33.
\end{gathered}
\end{equation}
For the last line, use $1100=2^{10}(275/256)$ and $33^2<1100$.
All finite decimals in these inequalities are exact rational endpoints.

\begin{lemma}\label{lem:quintic-erh}
Let $p\ge1100$ be prime with $p^4\equiv1\pmod{25}$.
Assume GRH for $\zeta_{L_p}$.
There is a rational prime $q<p$ with $q\equiv1\pmod5$ such that
\begin{equation}\label{eq:quintic-nonresidue}
p^{(q-1)/5}\not\equiv1\pmod q.
\end{equation}
\end{lemma}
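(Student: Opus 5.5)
Assume, for contradiction, that every rational prime $q<p$ with $q\equiv1\pmod5$ satisfies $p^{(q-1)/5}\equiv1\pmod q$. The argument rests on one observation: a prime ideal $\mathfrak q\nmid5p$ of $K$ splits completely in $L_p$ exactly when its Kummer Frobenius is trivial. As in the last step of the proof of Theorem~\ref{thm:quintic-analytic-detector}, the following hold.
\begin{itemize}
\item Any prime ideal of $K$ over a rational prime $q\not\equiv1\pmod5$ with $q\ne5$ has trivial Kummer symbol, since fifth powering is bijective on $\mathbb F_q^\times$.
\item Primes $q\equiv1\pmod5$ split completely in $K$, so by hypothesis they have trivial symbol whenever $q<p$.
\item By Lemma~\ref{lem:quintic-exact-conductor}, the prime above $5$ splits locally in $L_p/K$.
\end{itemize}
Consequently every prime ideal $\mathfrak q$ of $K$ with $\Nm\mathfrak q<p$, including the one above $5$, is totally split in $L_p/K$. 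Equivalently, each nontrivial character $\chi^j$ ($1\le j\le4$) of conductor $p\mathcal O_K$ takes the value $1$ on all prime ideals of norm below $p$.

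The next step applies a weighted explicit formula of the type Wesolowski uses \cite[Section~3]{Wesolowski}. Under GRH for $\zeta_{L_p}=\zeta_K\prod_{j=1}^4L(s,\chi^j)$, every $L(s,\chi^j)$ has its zeros on the critical line. Write $\zeta_{L_p}/\zeta_K^5$ as a product of ratios $L(s,\chi^j)/\zeta_K(s)$ summed over $j$, or more concretely take the sum $\sum_j\bigl(\zeta_K\text{-term}-L(\chi^j)\text{-term}\bigr)$. With a Fejér-type weight of length $x=\log p$ this gives
\[
\sum_{\Nm\mathfrak a^{}\le p}\Lambda_K(\mathfrak a)\bigl(1-\operatorname{Re}\chi^j(\mathfrak a)\bigr)g(\log\Nm\mathfrak a)
=\sum_{\rho}\widehat g(\rho)\text{-terms}-(\text{pole term of }\zeta_K)+(\text{gamma and conductor terms}).
\]
By the previous paragraph the left side vanishes, apart from prime powers $\mathfrak q^k$ with $\Nm\mathfrak q<p\le\Nm\mathfrak q^k$; these lie outside the support if the support is cut at $\Nm\mathfrak a<p$. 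On the right, the pole of $\zeta_K$ contributes a main term of size $\asymp\sqrt p$. Because GRH puts the zeros on the line and the Fejér transform is nonnegative there, the zeros of $\zeta_K$ and of $L(\chi^j)$ enter with opposite signs. So I will use the variant in which only the nonnegative sum over zeros of $\zeta_{L_p}$ appears, bounded as a whole by $\log|D_{L_p}|$ using \eqref{eq:quintic-exact-discriminant}. This produces an inequality of the form
\[
\sqrt p\;\le\;c_1\log|D_{L_p}|+c_2[L_p:\mathbb Q]=c_1\bigl(15\log5+16\log p\bigr)+20c_2,
\]
where $c_1,c_2$ are explicit and involve $\gamma$, $\log2$, $\log\pi$ (the archimedean terms for $20$ complex places). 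This is the role of the constants recorded in \eqref{eq:quintic-numerical-inputs}.

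Finally, the right side is $O(\log p)$ while the left is $\sqrt p$. The numerical inputs $\sqrt{1100}>33$ and $\log1100<7.01$ show the inequality fails at $p=1100$. Monotonicity of $\sqrt p-16c_1\log p$ for $p\ge1100$, checked through its derivative, then gives the contradiction for all $p\ge1100$. The rational prime $q$ underlying a non-split $\mathfrak q$ of norm below $p$ must satisfy $q\equiv1\pmod5$ and \eqref{eq:quintic-nonresidue}.

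The main obstacle is the middle step: adapting the explicit formula precisely enough that the constants actually close at $p=1100$. This requires a weight and normalization in which the main term has constant near $1$, the sum over zeros is controlled purely by $\log|D_{L_p}|$ (which is why the exact conductor $p^4$, rather than $5^5p^4$, matters), and the ramified and prime-power terms are handled exactly. I would first try Wesolowski's choice of weight, with its stated bound of the shape $\sqrt x\le(\text{const})\log|D_L|+\dots$, substitute $x=p$, and then verify the resulting rational inequality at $1100$.
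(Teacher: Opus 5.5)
Your outer frame matches the paper's. You argue by contradiction, show that every prime ideal of $K$ of norm below $p$ (including the one above $5$) has trivial Kummer symbol, and feed this into a GRH explicit formula with the exact conductor $p\mathcal O_K$. The gap is that, for $p\ge1100$, the whole content of the lemma is that one explicit inequality closes. You assert this (``$c_1,c_2$ explicit'', ``fails at $p=1100$'') without deriving it, and you flag it yourself as the main obstacle. The margin is tiny. The paper reaches $\frac49\le F(p)$ with $F(x)=\frac{2\log x+3/10}{\sqrt x}+\frac{5(\log x)^2+(83/12)\log x+427/90}{x^{3/2}}$, and $F(1100)\approx0.4395$ (its rational certificate gives $F(1100)<0.4422$), against $\frac49\approx0.4444$. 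So the exact normalisation of the kernel, of the zero sum and of the archimedean terms is the proof, not a detail to fill in later.

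The mechanism you sketch would also not produce a closing inequality.

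First, a Fej\'er-type weight does not let you bound the zero sum by a discriminant. That bound comes from evaluating the log-derivatives at the real point $s=3/2$, where the combined Dirichlet series has a sign and can be discarded. What this controls is $\sum_\rho|\rho+\frac12|^{-2}$. You therefore need a kernel whose value at a critical zero is proportional to $|\rho+\frac12|^{-2}$, which is Wesolowski's kernel $x^{s+a}/(s+a)^2$ with $a=1/2$. With the transform $T\operatorname{sinc}^2(tT/2)$, $T=\log p$, a zero $\rho=\frac12+it$ of some $L(s,\chi^j)$ with $|t|\lesssim1/T$ contributes about $T$, while $|\rho+\frac12|^{-2}\approx1$. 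That comparison costs a factor of order $\log p$.

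Second, even with the right kernel, you cannot bound the zeros in the comparison of $\zeta_K$ with $L(s,\chi)$ by the whole zero sum of $\zeta_{L_p}$. That sum is controlled by $\frac12\log|D_{L_p}|=\frac{15}2\log5+8\log p$, which turns the inequality into roughly $\frac49\sqrt p\lesssim8\log p$. This fails at $p=1100$ ($14.7$ against $56$). The paper also remarks that merely replacing the exact term $\log(125^2p^4)$ by the generic $2\log(125p^4)$ already destroys the estimate. So substituting $x=p$ into a general stated bound, your fallback, will not work either.

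What works is $R=\sum_{\rho\in\mathcal Z_K}|\rho+\frac12|^{-2}+\sum_{\rho\in\mathcal Z_\chi}|\rho+\frac12|^{-2}\le2\log p+c_0$, with $c_0=3\log5+\frac{32}3-4\gamma-12\log2-4\log\pi<\frac3{10}$. The negative archimedean terms are essential here, and the explicit lower-order terms $I_0$ must be handled as well. Your $\log|D_{L_p}|$ is legitimate only if you sum over all four $\chi^j$. Then the main term is multiplied by four and the zeros of $\zeta_K$ enter with weight four, and the bookkeeping just reproduces the paper's single-character inequality.

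You could try to exploit the sign you noticed instead, by discarding the nonnegative contribution of the zeros of $L(s,\chi^j)$. That would be a different argument from the one you outline, and it would need explicit control of the low zeros of $\zeta_K$. Incidentally, $L_p$ is totally imaginary of degree $20$, so it has $10$ complex places, not $20$.
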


\begin{proof}
Let $\chi$ be a faithful primitive Hecke character of $L_p/K$.
It has order five and conductor $p\mathcal O_K$ by Lemma~\ref{lem:quintic-exact-conductor}.
The factorization
\[
\zeta_{L_p}(s)=\zeta_K(s)\prod_{j=1}^{4}L(s,\chi^j)
\]
has entire nonprincipal factors.
Hence GRH for $\zeta_{L_p}$ implies the required zero assertions for $\zeta_K$ and $L(s,\chi)$ in the critical strip.

Suppose there is no detecting prime $q<p$ as in the statement.
Every prime ideal $\mathfrak q$ of $K$ with $\Nm\mathfrak q<p$ then has $\chi(\mathfrak q)=1$.
Indeed, for an underlying rational prime $q\ne5$ with $q\not\equiv1\pmod5$, fifth powering is bijective on $\mathbb F_q^\times$.
The rational residue of $p$ already has a fifth root in $\mathbb F_q$, hence in the residue field at $\mathfrak q$.
For $q\equiv1\pmod5$ the same conclusion follows from our supposition.
These give trivial Kummer Frobenius.
The prime above $5$ splits by Lemma~\ref{lem:quintic-exact-conductor}, and every conductor prime has norm at least $p$.

In the notation of \cite[Section~3]{Wesolowski}, put $\mathfrak m=p\mathcal O_K$ and $G=\operatorname{Cl}_{\mathfrak m}(K)$.
Take $H=G$ and $a=1/2$; the auxiliary character $\theta$ is then principal of conductor one.
The sums $S_H$ and $S_{\mathfrak m}$ there are weighted von Mangoldt sums over ideals coprime and noncoprime to $\mathfrak m$, respectively; $S_H$ contains the factor $1-\chi(\mathfrak a)$.
The terms $I_{1/2}$, $I_0$ and $I_-$ in its Lemma~3.4 collect the critical-zero, remaining-pole and trivial-zero contributions.
Let $\mathcal Z_K$ and $\mathcal Z_\chi$ denote the critical-strip zeros of $\zeta_K$ and $L(s,\chi)$, counted with multiplicity.
Put
\[
R=\sum_{\rho\in\mathcal Z_K}\dfrac1{|\rho+1/2|^2}
 +\sum_{\rho\in\mathcal Z_\chi}\dfrac1{|\rho+1/2|^2},
\qquad \ell=\log p.
\]
Thus $R$ is $R(1/2,\chi)$ in the cited paper.
For clarity, if
\[
D(s)=\dfrac{\zeta_K'}{\zeta_K}(s)-\dfrac{L'}L(s,\chi),
\]
then its remaining-pole term specializes to
\[
I_0(x)=\dfrac{\log x\,D(-1/2)+D'(-1/2)}{\sqrt x}-4.
\]
The constant $-4$ is the contribution $-1/a^2$ at $a=1/2$.
Retain the exact conductor term $\log(125^2p^4)$ in equation~(3-1) of \cite{Wesolowski}, which occurs in the proof of its Lemma~3.6.
For real $s>1$, one may discard the combined real logarithmic derivative, since
\[
\operatorname{Re}\left(\dfrac{\zeta_K'}{\zeta_K}(s)
 +\dfrac{L'}L(s,\chi)\right)
=-\sum_{\mathfrak a}\dfrac{\Lambda_K(\mathfrak a)
 (1+\operatorname{Re}\chi(\mathfrak a))}{(\Nm\mathfrak a)^s}\le0.
\]
Here $\Lambda_K(\mathfrak q^j)=\log\Nm\mathfrak q$ and is zero on other ideals.
Thus the cited equation, with $[K:\mathbb Q]=4$ and no real places, gives
\begin{align}
R&\le\dfrac12\left[\log(125^2p^4)+\dfrac{16}{3}
 +8\bigl(\psi(3/2)-\log(2\pi)\bigr)\right]\nonumber\\
 &=2\ell+c_0,\label{eq:quintic-zero-sum}
\end{align}
where $\psi=\Gamma'/\Gamma$ and
\[
c_0=3\log5+\dfrac{32}{3}-4\gamma-12\log2-4\log\pi.
\]
We used $\psi(3/2)=2-\gamma-2\log2$.
The rational bounds \eqref{eq:quintic-numerical-inputs} give
\[
c_0<3(1.61)+\dfrac{32}{3}-4(0.577)-12(0.693)-4(1.144)
=\dfrac{89}{300}<\dfrac3{10}.
\]

At $x=p$, the ideal sums $S_H$ and $S_{\mathfrak m}$ of \cite[Lemma~3.4]{Wesolowski} vanish.
The first uses character triviality on every ideal of norm below $p$; the second uses the conductor-prime norm bound.
There are no real places, so $I_-=0$.
The cited lemma and \cite[Lemmas~3.7 and~3.9]{Wesolowski} specialize to
\begin{gather*}
\dfrac{4p}{9}=I_{1/2}+I_0,\qquad
|I_{1/2}|\le\sqrt p\,R,\\
\operatorname{Re}I_0\le
\dfrac{((5/2)\ell+1)R+(25/6)\ell+40/9}{\sqrt p}-4.
\end{gather*}
Taking real parts and dividing by $\sqrt p$, we obtain
\begin{equation}\label{eq:quintic-explicit-comparison}
\dfrac49\sqrt p\le
R\left(1+\dfrac{(5/2)\ell+1}{p}\right)
-\dfrac4{\sqrt p}+\dfrac{(25/6)\ell+40/9}{p}.
\end{equation}
Discard the negative term, substitute $R\le2\log p+3/10$, and divide by $\sqrt p$.
This forces $4/9\le F(p)$, where
\[
F(x)=\dfrac{2\log x+3/10}{\sqrt x}
 +\dfrac{5(\log x)^2+(83/12)\log x+427/90}{x^{3/2}}.
\]
Both summands are strictly decreasing for $x\ge1100$.
The derivative of the first has the sign of $37/20-\log x<0$.
For the second, put $P(v)=5v^2+(83/12)v+427/90$ and note that
\[
P'(v)-\dfrac32P(v)
=-\dfrac{15}{2}v^2-\dfrac38v-\dfrac15<0\qquad(v>0).
\]
Finally, \eqref{eq:quintic-numerical-inputs} gives the strict rational comparison
\[
F(1100)<\dfrac{144458377}{326700000}
=\dfrac49-\dfrac{741623}{326700000}<\dfrac49.
\]
This contradicts $4/9\le F(p)$ for every $p\ge1100$ and proves the lemma.
\end{proof}

The preceding argument specializes the published explicit formulas and does not invoke the general numerical bound of \cite[Theorem~1.1]{Wesolowski} with a modified constant.
The conductor contribution in \eqref{eq:quintic-zero-sum} is $\log(125^2p^4)$; replacing it by the generic bound $2\log(125p^4)$ would lose the estimate used here.

\begin{theorem}\label{thm:quintic-erh}
Assume the fifth-power GRH hypothesis specified before Theorem~\ref{thm:main-quintic}.
For every integer $n\ge8$ and every integer $M\ge2$,
\[
|n!-M^5|>n.
\]
\end{theorem}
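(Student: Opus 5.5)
The plan is to combine the finite unconditional range with the conditional detecting-prime lemma.

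First, for $8\le n\le1100$ the conclusion is exactly Corollary~\ref{cor:quintic-finite} with $k=5$, so no hypothesis is needed there. More generally, for any $n\ge8$ with $|n!-M^5|\le n$, Proposition~\ref{prop:quintic-finite} already forces the displacement to be a prime $p$ with $1100\le p\le n$ and $n/2<p$. Since $1100$ is composite, in fact $p\ge1101$. In particular $n\ge1101\ge11$, so Lemma~\ref{lem:quintic-residue-conditions} applies. It gives $M^5=n!+\varepsilon p$, the congruence $p^4\equiv1\pmod{25}$ from \eqref{eq:quintic-mod25}, and the residue condition \eqref{eq:quintic-residue}. The latter says $p^{(q-1)/5}\equiv1\pmod q$ for every prime $q\le n$ with $q\ne p$ and $q\equiv1\pmod5$.

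Next, the prime $p$ satisfies $p\ge1100$ and $p^4\equiv1\pmod{25}$. It is therefore one of the primes covered by the fifth-power GRH hypothesis, so GRH for $\zeta_{L_p}$ is available. Lemma~\ref{lem:quintic-erh} then supplies a rational prime $q<p$ with $q\equiv1\pmod5$ and $p^{(q-1)/5}\not\equiv1\pmod q$. Because $q<p\le n$ and $q\ne p$, this $q$ is one of the primes to which \eqref{eq:quintic-residue} applies. That yields the contradiction, and hence $|n!-M^5|>n$ for all $n\ge8$ and $M\ge2$.

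There is essentially no analytic obstacle left at this stage, since all of it sits inside Lemma~\ref{lem:quintic-erh}. The only step needing care is the bookkeeping that the hypotheses match. The displacement prime must satisfy $p\ge1100$ rather than merely $p>n/2$; this is what Proposition~\ref{prop:quintic-finite} provides via Lemma~\ref{lem:quintic-small-primes}. The mod-$25$ condition must also hold, so that the exact-conductor setting of Lemma~\ref{lem:quintic-exact-conductor} and the stated GRH family apply. Finally, the detecting prime must be $\le n$ and distinct from $p$; both follow from $q<p\le n$.
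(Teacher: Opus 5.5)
Your proposal is correct and follows the paper's proof essentially step for step: Proposition~\ref{prop:quintic-finite} forces a displacement prime $p\ge1100$, the congruence \eqref{eq:quintic-mod25} places $p$ in the GRH family, and Lemma~\ref{lem:quintic-erh} supplies a detecting prime $q<p\le n$ that contradicts \eqref{eq:quintic-residue}. Your extra bookkeeping (that $n\ge11$, so Lemma~\ref{lem:quintic-residue-conditions} applies, and that $q\ne p$) makes explicit what the paper leaves implicit.
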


\begin{proof}
If the inequality failed, Proposition~\ref{prop:quintic-finite} would give \eqref{eq:quintic-shift} with a prime $p\ge1100$.
By \eqref{eq:quintic-mod25}, this prime belongs to the family covered by the fifth-power GRH hypothesis.
Lemma~\ref{lem:quintic-erh} supplies $q<p\le n$ with $q\equiv1\pmod5$ and \eqref{eq:quintic-nonresidue}.
This contradicts \eqref{eq:quintic-residue}.
\end{proof}

\begin{corollary}\label{cor:quintic-multiples}
Under the same GRH hypothesis, for every integer $n\ge8$ and all integers $m,k\ge2$ with $5\mid k$,
\[
|n!-m^k|>n.
\]
\end{corollary}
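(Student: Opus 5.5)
The plan is to reduce directly to Theorem~\ref{thm:quintic-erh}, exactly as the unconditional reduction was done in Theorem~\ref{thm:quintic-eventual} and Corollary~\ref{cor:quintic-finite}. Fix $n\ge8$ and integers $m,k\ge2$ with $5\mid k$. Put $M=m^{k/5}$. Since $m\ge2$ and $k/5\ge1$, we have $M\ge2$, and
\[
m^k=\bigl(m^{k/5}\bigr)^5=M^5.
\]

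Under the fifth-power GRH hypothesis specified before Theorem~\ref{thm:main-quintic}, Theorem~\ref{thm:quintic-erh} applies to this $M$. It gives $|n!-M^5|>n$, which is the asserted inequality $|n!-m^k|>n$.

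One point needs checking: the hypothesis used is exactly the one required by Theorem~\ref{thm:quintic-erh}. That hypothesis concerns the fields $L_p$ for primes $p\ge1100$ with $p^4\equiv1\pmod{25}$. It does not depend on $m$ or $k$, so passing from $m^k$ to $M^5$ does not enlarge the family of Dedekind zeta functions involved.

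There is no real obstacle. All the arithmetic content sits in Theorem~\ref{thm:quintic-erh}, that is, in Proposition~\ref{prop:quintic-finite} together with Lemma~\ref{lem:quintic-erh}. The corollary is purely the substitution $m\mapsto m^{k/5}$. The only item to verify is that $M\ge2$, which holds because $k\ge5$ and $m\ge2$.
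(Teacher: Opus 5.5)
Your proposal is correct and matches the paper's proof: the paper also writes $k=5s$, sets $M=m^s\ge2$ so that $m^k=M^5$, and applies Theorem~\ref{thm:quintic-erh} under the same fifth-power GRH hypothesis.
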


\begin{proof}
Write $k=5s$, where $s\ge1$ is an integer, and put $M=m^s\ge2$.
Then $m^k=M^5$, so Theorem~\ref{thm:quintic-erh} gives the assertion.
\end{proof}

\section{Remaining cases}\label{sec:remaining}

The preceding results do not establish Conjecture~\ref{conj:fsc} in full.
They give the following exact reduction.

\begin{corollary}\label{cor:final-reduction}
Conjecture~\ref{conj:fsc} is equivalent to excluding, for $n\ge8$, the two families
\begin{align}
n!&=a^2(b^2-1), &1\le a^2&\le n,\label{eq:final-squares}\\
m^\ell&=n!+\varepsilon p, &n/2&<p\le n,\label{eq:final-odd}
\end{align}
where $a,b\ge1$, $m\ge2$, $\ell\ge5$ is an odd prime, $p$ is prime and $\varepsilon\in\{-1,1\}$.
In \eqref{eq:final-squares}, one necessarily has $\gcd(a,b)=1$, $b$ odd, $\gcd(b,n!)=1$ and $P^+(b)>n+2$.
In \eqref{eq:final-odd}, one may additionally require $P^+(m)>n+2$.
Unconditionally, the subfamily $\ell=5$ may be restricted to $1100<n<N_5$, where $N_5$ is the effective uncomputed threshold of Theorem~\ref{thm:quintic-eventual}.
Under the GRH hypothesis of Section~\ref{sec:quintic-erh-completion}, the second family may instead be restricted to $\ell\ge7$.
\end{corollary}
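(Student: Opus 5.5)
The plan is to prove the two directions of the equivalence separately and then verify the additional restrictions one by one.

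For the forward direction, suppose $n\ge8$, $m,k\ge2$ and $|n!-m^k|\le n$.
I split according to the prime factorization of $k$.
If $3\mid k$, Theorem~\ref{thm:cubic-separation} already gives a contradiction.
If $k$ is even and $k\ge4$, Corollary~\ref{cor:large-even} gives a contradiction.
If $k=2$, Theorem~\ref{thm:classification} shows that the displacement is a square above $n!$, which produces a solution of \eqref{eq:final-squares}.
That theorem also supplies $\gcd(a,b)=1$, $b$ odd and $\gcd(b,n!)=1$.
Finally, $P^+(b)>n+2$ follows from Theorem~\ref{thm:bounded-base}: since $a^2\le n$ forces $P^+(a)\le n$, the largest prime divisor of $m=ab$ is at most $n+2$ unless $P^+(b)>n+2$.

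In the remaining case $k$ has an odd prime divisor $\ell\ge5$.
Write $m^k=M^\ell$ with $M=m^{k/\ell}\ge2$.
Theorem~\ref{thm:odd-shift} then leaves exactly the prime displacement \eqref{eq:final-odd} with $n/2<p\le n$.
The requirement $P^+(M)>n+2$ follows from Corollary~\ref{cor:large-base-prime} applied to $(M,\ell)$.

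For the converse, any solution of \eqref{eq:final-squares} gives a square $(ab)^2$ within distance $a^2\le n$ of $n!$, by the converse part of Theorem~\ref{thm:classification}.
Note that $b\ge2$, since $b=1$ would give $n!=0$.
Any solution of \eqref{eq:final-odd} is directly a perfect power within distance $p\le n$.
Hence excluding both families is equivalent to Conjecture~\ref{conj:fsc}.

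The quintic restrictions come from earlier results.
For $\ell=5$, Corollary~\ref{cor:quintic-finite} and Theorem~\ref{thm:quintic-eventual} confine $n$ to the range $1100<n<N_5$.
Under GRH, Theorem~\ref{thm:quintic-erh} removes $\ell=5$ entirely, leaving only $\ell\ge7$.

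The only delicate point is bookkeeping: choosing $\ell$ to be an odd prime divisor of $k$ while handling the powers of $2$ and the divisibility by $3$ first, so that no case is missed.
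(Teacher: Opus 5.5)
Your proposal is correct and follows essentially the same reduction as the paper: the square case via Theorem~\ref{thm:classification}, with Theorem~\ref{thm:bounded-base} giving $P^+(b)>n+2$; exclusion of $3\mid k$ by Theorem~\ref{thm:cubic-separation}; the prime-displacement family from Theorem~\ref{thm:odd-shift} for $\ell\ge5$; and the quintic results for the range restrictions. The only bookkeeping difference is that you dispose of even $k\ge4$ with Corollary~\ref{cor:large-even}, whereas the paper picks any prime divisor $\ell$ of $k$ and sends every even exponent into the square family \eqref{eq:final-squares}, so its reduction does not need the quartic results.
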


\begin{proof}
Choose a prime divisor $\ell$ of the exponent of a counterexample and rewrite the same number as an $\ell$-th power.
For $\ell=2$, Theorem~\ref{thm:classification} gives \eqref{eq:final-squares}.
Since $a^2\le n$, every prime divisor of $a$ is at most $n$; applying Theorem~\ref{thm:bounded-base} to the base $ab$ therefore gives $P^+(b)>n+2$.
Theorem~\ref{thm:cubic-separation} excludes $\ell=3$.
For an odd prime $\ell\ge5$, Theorem~\ref{thm:odd-shift} gives \eqref{eq:final-odd}.
Theorem~\ref{thm:bounded-base} further gives $P^+(m)>n+2$ in this family.
Conversely, each displayed equation violates the conjectured inequality.
Corollary~\ref{cor:quintic-finite} and Theorem~\ref{thm:quintic-eventual} give the finite index restriction when $\ell=5$.
The conditional restriction follows from Theorem~\ref{thm:quintic-erh}.
\end{proof}

Thus the unresolved square case includes the Brocard--Ramanujan conjecture.
At odd exponents, the remaining equations impose simultaneous residue conditions at small primes.
The unconditional analytic argument excludes all sufficiently large fifth-power displacement primes, but its threshold has not been computed.
Completing the fifth-power family without GRH therefore requires an effective overlap with the finite interval proved here, or another argument covering the remaining indices.
In the global conditional argument, the sole conditional input remains the detecting prime supplied by Lemma~\ref{lem:quintic-erh}.

\bibliographystyle{amsplain}
\bibliography{references}

@article{BerndtGalway,
  author = {Berndt, B. C. and Galway, W. F.},
  title = {On the {Brocard--Ramanujan} {Diophantine} equation {$n!+1=m^2$}},
  journal = {Ramanujan J.},
  volume = {4},
  year = {2000},
  pages = {41--42},
  doi = {10.1023/A:1009873805276},
  note = {\url{https://doi.org/10.1023/A:1009873805276}}
}

@misc{Cagarrinho,
  author = {Cagarrinho, F.},
  title = {How to prove that a sufficiently large prime {$p$} can't be a cubic residue modulo every prime {$q<p$}?},
  year = {2025},
  note = {MathOverflow, question 492391; answer by Wojowu incorporating a suggestion of Will Sawin; \url{https://mathoverflow.net/questions/492391}},
  url = {https://mathoverflow.net/questions/492391}
}

@article{ErdosOblath,
  author = {Erd\H{o}s, P. and Obl\'ath, R.},
  title = {{\"U}ber diophantische {Gleichungen} der {Form} {$n!=x^p\pm y^p$} und {$n!\pm m!=x^p$}},
  journal = {Acta Litt. Sci. Szeged},
  volume = {8},
  year = {1937},
  pages = {241--255},
  url = {https://users.renyi.hu/~p_erdos/1937-09.pdf},
  note = {\url{https://users.renyi.hu/~p_erdos/1937-09.pdf}}
}

@article{GarciaLee,
  author = {Garcia, S. R. and Lee, E. S.},
  title = {Unconditional explicit {Mertens'} theorems for number fields and {Dedekind} zeta residue bounds},
  journal = {Ramanujan J.},
  volume = {57},
  year = {2022},
  pages = {1169--1191},
  doi = {10.1007/s11139-021-00435-6},
  note = {\url{https://doi.org/10.1007/s11139-021-00435-6}}
}

@book{HardyWright,
  author = {Hardy, G. H. and Wright, E. M.},
  title = {An Introduction to the Theory of Numbers},
  edition = {sixth},
  publisher = {Oxford University Press},
  address = {Oxford},
  year = {2008},
  note = {Revised by D. R. Heath-Brown and J. H. Silverman}
}

@book{IrelandRosen,
  author = {Ireland, K. and Rosen, M.},
  title = {A Classical Introduction to Modern Number Theory},
  edition = {second},
  series = {Graduate Texts in Mathematics},
  volume = {84},
  publisher = {Springer},
  address = {New York},
  year = {1990},
  doi = {10.1007/978-1-4757-2103-4},
  note = {\url{https://doi.org/10.1007/978-1-4757-2103-4}}
}

@misc{Mathphile,
  author = {{Mathphile}},
  title = {For any {$k>1$}, if {$n!+k$} is a square then will {$n\le k$} always be true?},
  year = {2020},
  note = {Mathematics Stack Exchange, question 3499670; answers by mathworker21 and WE Tutorial School; \url{https://math.stackexchange.com/questions/3499670}},
  url = {https://math.stackexchange.com/questions/3499670}
}

@misc{Milne,
  author = {Milne, J. S.},
  title = {{Class Field Theory}},
  year = {2020},
  note = {Version 4.03, online course notes; \url{https://www.jmilne.org/math/CourseNotes/CFT.pdf}},
  url = {https://www.jmilne.org/math/CourseNotes/CFT.pdf}
}

@article{Neumaier,
  author = {Neumaier, A.},
  title = {Bounding basis reduction properties},
  journal = {Des. Codes Cryptogr.},
  volume = {84},
  year = {2017},
  pages = {237--259},
  doi = {10.1007/s10623-016-0273-9},
  note = {\url{https://doi.org/10.1007/s10623-016-0273-9}}
}

@misc{Orr,
  author = {Orr, D.},
  title = {Sequences {A240939} and {A240940}: least nonnegative shifts from a factorial to a perfect power},
  year = {2014},
  note = {The On-Line Encyclopedia of Integer Sequences; companion sequence \url{https://oeis.org/A240940}; \url{https://oeis.org/A240939}},
  url = {https://oeis.org/A240939}
}

@article{PollackShapiro,
  author = {Pollack, R. M. and Shapiro, H. N.},
  title = {The next to last case of a factorial diophantine equation},
  journal = {Comm. Pure Appl. Math.},
  volume = {26},
  year = {1973},
  pages = {313--325},
  doi = {10.1002/cpa.3160260303},
  note = {\url{https://doi.org/10.1002/cpa.3160260303}}
}

@article{RosserSchoenfeld,
  author = {Rosser, J. B. and Schoenfeld, L.},
  title = {Approximate formulas for some functions of prime numbers},
  journal = {Illinois J. Math.},
  volume = {6},
  year = {1962},
  pages = {64--94},
  doi = {10.1215/ijm/1255631807},
  note = {\url{https://doi.org/10.1215/ijm/1255631807}}
}

@article{Zaman,
  author = {Zaman, A.},
  title = {On the least prime ideal and {Siegel} zeros},
  journal = {Int. J. Number Theory},
  volume = {12},
  year = {2016},
  pages = {2201--2229},
  doi = {10.1142/S1793042116501335},
  note = {\url{https://doi.org/10.1142/S1793042116501335}}
}

@article{Wesolowski,
  author = {Wesolowski, B.},
  title = {Generating subgroups of ray class groups with small prime ideals},
  journal = {The Open Book Series},
  volume = {2},
  number = {1},
  year = {2019},
  pages = {461--478},
  doi = {10.2140/obs.2019.2.461},
  note = {\url{https://doi.org/10.2140/obs.2019.2.461}}
}

\end{document}